\tikzset{>=stealth',
  head/.style = {fill = white, text=black}, 
  pil/.style={->,thick},
  und/.style={thick},
  junct/.style = {draw,circle,inner sep=0.5pt,outer sep=0pt, fill=black}
  }
\renewcommand{\familydefault}{ppl}
\newtheorem{theorem}{Theorem}[section]
\newtheorem{proposition}[theorem]{Proposition}
\newtheorem{lemma}[theorem]{Lemma}
\newtheorem{claim}[theorem]{Claim}
\newtheorem*{claim*}{Claim}
\newtheorem{conjecture}[theorem]{Conjecture}
\theoremstyle{remark}
\newtheorem{example}[theorem]{Example}
\numberwithin{equation}{section}
\newcommand\complexes{{\mathbb C}}
\DeclareMathOperator{\GL}{\sf GL}
\newcommand{\blank}{\phantom{2}}
\newcommand{\DD}{\mathcal{D}}
\newcommand{\EE}{\mathcal{E}}
\newcommand{\FF}{\mathcal{F}}
\newcommand{\GG}{\mathcal{G}}
\newcommand{\HH}{\mathcal{H}}
\newcommand{\QQ}{\mathcal{Q}}
\newcommand{\LL}{\mathcal{L}}
\newcommand{\w}{{\sf w}}
\newcommand\x{{\sf x}}
\newcommand\y{{\sf y}}
\newcommand\z{{\sf z}}
\newcommand\uuu{{\sf u}}
\newcommand\aaa{{\sf a}}
\newcommand\bbb{{\sf b}}
\newcommand{\m}{{\sf mswap}}
\newcommand{\revm}{{\sf revmswap}}
\DeclareMathOperator{\wt}{{\tt wt}}
\DeclareMathOperator{\lab}{{\tt label}}
\DeclareMathOperator{\family}{{\tt family}}
\DeclareMathOperator{\im}{{\rm im}}
\newcommand{\gap}{\hspace{1in} \\ \vspace{-.2in}}
\newcommand{\upper}[1]{#1_\star}
\newcommand*{\Scale}[2][4]{\scalebox{#1}{$#2$}}%
\newcommand{\excise}[1]{}
\newcommand{\comment}[1]{$\star${\sf\textbf{#1}}$\star$}
\newcommand*\circled[1]{\tikz[baseline=(char.base)]{\node[shape=circle,draw,inner sep=1pt] (char) {$#1$};}}
\definecolor{qqffqq}{rgb}{0,0.87,0} 
\definecolor{ttfftt}{rgb}{0,0.87,0} 
\definecolor{zzqqzz}{rgb}{0.5,0,0.5} 
\definecolor{ccwwff}{rgb}{0.5,0,0.5} 
\definecolor{cccccc}{rgb}{0.67,0.67,0.67} 
\definecolor{eqeqeq}{rgb}{0.65,0.65,0.65} 
\definecolor{zzttqq}{rgb}{1,1,0} 
\begin{document}
\pagestyle{plain}

\mbox{}
\title{Equivariant $K$-theory of Grassmannians II: \\ the Knutson-Vakil conjecture}
\author{Oliver Pechenik}
\author{Alexander Yong}
\address{Dept.~of Mathematics, U.~Illinois at
Urbana-Champaign, Urbana, IL 61801, USA}
\email{pecheni2@illinois.edu, ayong@uiuc.edu}

\date{August 3, 2015}

\begin{abstract}
In 2005, A.~Knutson--R.~Vakil conjectured a \emph{puzzle} rule for equivariant $K$-theory of Grassmannians.
We resolve this conjecture. After giving a correction, we establish a modified rule by combinatorially
connecting it to the authors' recently proved tableau rule for the same Schubert calculus problem.
\end{abstract}

\maketitle

\tableofcontents

\section{Introduction}
\label{sec:intro}

A.~Knutson--R.~Vakil 
\cite[$\mathsection$5]{Coskun.Vakil}
 conjectured a combinatorial rule for the structure coefficients of the torus-equivariant $K$-theory ring
of a Grassmannian. The structure coefficients are with respect
to the basis of Schubert structure sheaves. Their rule extends 
\emph{puzzles}, combinatorial objects founded in work of A.~Knutson-T.~Tao \cite{Knutson.Tao} and in their collaboration with C.~Woodward \cite{KTW}. The various puzzle rules play a prominent role in modern Schubert calculus, see e.g., \cite{Buch.Kresch.Tamvakis, Vakil:annals, Coskun.Vakil},
recent developments \cite{Knutson:positroid, Knutson.Purbhoo, Buch.Kresch.Purbhoo.Tamvakis, Buch:quantum} and the references therein.

This paper is a sequel to \cite{PeYo} where we gave the first proved tableau rules for these structure coefficients,
including a conjecture of H.~Thomas and the second author \cite{Thomas.Yong:H_T}. Here we use these results to prove a mild correction of the puzzle conjecture.

\subsection{The puzzle conjecture}
\label{sec:puzzle_conjecture}
Let $X = {\rm Gr}_k(\complexes^n)$ denote the Grassmannian of $k$-dimensional subspaces of $\complexes^n$. The general linear group ${\sf GL}_n$ acts transitively on $X$ by change of basis. The Borel subgroup ${\sf B}\subset 
{\sf GL_n}$ of invertible lower triangular matrices acts on $X$ with finitely many orbits, i.e., the 
{\bf Schubert cells} $X_\lambda^\circ$. These orbits are indexed by $\{0,1\}$-sequences $\lambda$ of length $n$
with $k$-many $1$'s. The {\bf Schubert varieties} are the Zariski closures $X_\lambda:=\overline{X_\lambda^\circ}$.
The $X_\lambda$ are stable under the action of the maximal torus
${\sf T}\subset {\sf B}$ of invertible diagonal matrices. Therefore their structure sheaves ${\mathcal O}_{X_\lambda}$ admit classes in $K_{\sf T}(X)$, the Grothendieck ring of {\sf T}-equivariant vector bundles over $X$. Now, $K_{\sf T}(X)$ is a 
$K_{\sf T}({\rm pt})$-module and the ${n\choose k}$ Schubert classes form a module basis. One may make a standard
identification $K_{\sf T}({\rm pt})\cong {\mathbb Z}[1-\frac{t_i}{t_{i+1}}:1\leq i<n]$. The structure coefficients
$K_{\lambda,\mu}^{\nu}\in K_{\sf T}({\rm pt})$ are defined by
\[[{\mathcal O}_{X_\lambda}]\cdot [{\mathcal O}_{X_\mu}]=\sum_{\nu}K_{\lambda,\mu}^\nu[{\mathcal O}_{X_{\nu}}].\]
Consider the $n$-length equilateral triangle oriented as $\Delta$. A {\bf puzzle}
is a filling of $\Delta$ with the following {\bf puzzle pieces}:
\[\begin{picture}(350,50)
\put(0,0){
\begin{tikzpicture}[line cap=round,line join=round,>=triangle 45,x=1.0cm,y=1.0cm]
\clip(0.7,-0.38) rectangle (2.22,1.08);
\draw (1,0)-- (2,0);
\draw (2,0)-- (1.5,0.87);
\draw (1.5,0.87)-- (1,0);
\end{tikzpicture}}
\put(17,21){$1$}
\put(29,21){$1$}
\put(23,8){$1$}

\put(50,4){
\begin{tikzpicture}[line cap=round,line join=round,>=triangle 45,x=1.0cm,y=1.0cm]
\clip(0.7,-0.23) rectangle (2.26,0.97);
\draw (1,0)-- (2,0);
\draw (2,0)-- (1.5,0.87);
\draw (1.5,0.87)-- (1,0);
\end{tikzpicture}}
\put(67,20){$0$}
\put(79,20){$0$}
\put(73,7){$0$}

\put(100,1){
\begin{tikzpicture}[line cap=round,line join=round,>=triangle 45,x=1.0cm,y=1.0cm]
\clip(0.65,-0.36) rectangle (2.86,1.08);
\draw  (1,0)-- (1.5,0.87);
\draw (1.5,0.87)-- (2.5,0.87);
\draw  (2.5,0.87)-- (2,0);
\draw (2,0)-- (1,0);
\end{tikzpicture}
}
\put(117,20){$0$}
\put(146,20){$0$}
\put(125,8){$1$}
\put(137,32){$1$}

\put(165,-10){\begin{tikzpicture}[line cap=round,line join=round,>=triangle 45,x=1.0cm,y=1.0cm]
\clip(1.12,-0.4) rectangle (2.76,2.1);
\draw (2,1.73)-- (1.5,0.87);
\draw (1.5,0.87)-- (2,0);
\draw  (2,0)-- (2.5,0.87);
\draw (2.5,0.87)-- (2,1.73);
\end{tikzpicture}}
\put(180,34){$0$}
\put(195,34){$1$}
\put(180,10){$1$}
\put(195,10){$0$}

\put(210,-4){
\begin{tikzpicture}[line cap=round,line join=round,>=triangle 45,x=1.0cm,y=1.0cm]
\clip(1.15,-1.06) rectangle (2.76,0.97);
\draw (1.5,0.87)-- (2,0);
\draw  (2,0)-- (2.5,0.87);
\draw (2.5,0.87)-- (1.5,0.87);
\draw  (2,0)-- (1.5,-0.87);
\end{tikzpicture}}
\put(228,34){$1$}
\put(234,47){$1$}
\put(238,37){\tiny{$1$}}
\put(245,33){\tiny{$0$}}
\put(225,15){\tiny{$0$}}
\put(233,11){\tiny{$1$}}

\put(260,-4){
\begin{tikzpicture}[line cap=round,line join=round,>=triangle 45,x=1.0cm,y=1.0cm]
\clip(1.01,-1.1) rectangle (2.98,1.19);
\draw  (1.5,0.87)-- (2,0);
\draw (2,0)-- (2.5,0.87);
\draw (2.5,0.87)-- (1.5,0.87);
\draw (2.5,-0.87)-- (2,0);
\end{tikzpicture}
}
\put(279,33){\tiny{$1$}}
\put(285,38){\tiny{$0$}}
\put(292,11){\tiny{$0$}}
\put(302,11){\tiny{$1$}}
\put(295,34){$0$}
\put(289,48){$0$}
\end{picture}\]
The double-labeled edges are {\bf gashed}.
A {\bf filling} requires that 
the common (non-gashed) edges of adjacent puzzle pieces share the same label. Two gashed edges may not be overlayed. The pieces on either side of a gash must have the indicated labels.
The first three may be rotated but the fourth
({\bf equivariant piece}) may not \cite{Knutson.Tao}. We call the remainder {\bf KV-pieces}; these may not be rotated.
The fifth piece may only be placed if the equivariant piece is attached to its left. 
There is a ``nonlocal'' requirement \cite[$\mathsection$5]{Coskun.Vakil} for using the sixth 
piece: it ``may only be placed (when completing the puzzle from top to bottom and
left to right as usual) if the edges to its right are a (possibly empty) series of horizontal $0$'s followed by a $1$.'' A {\bf KV-puzzle} is a puzzle filling of $\Delta$. 

Let $\Delta_{\lambda,\mu,\nu}$ be $\Delta$ with the boundary given by 
\begin{itemize}
\item $\lambda$ as read $\nearrow$ along the left side;
\item $\mu$ as read $\searrow$ along the right side; and
\item $\nu$ as read $\rightarrow$ along the bottom side.
\end{itemize}
The {\bf weight} ${\rm wt}(P)$ of a {\bf KV-puzzle} $P$ is a product of the following factors. Each KV-piece contributes a factor of $-1$. For each equivariant piece one draws a
$\searrow$ diagonal arrow
from the center of the piece to the $\nu$-side of $\Delta$; let $a$ be the 
unit segment of the $\nu$-boundary, as counted from the right. Similarly one determines 
$b$ by drawing a 
$\swarrow$ antidiagonal arrow. The equivariant piece contributes a factor of $1-\frac{t_a}{t_b}$.

\begin{conjecture}[The Knutson-Vakil puzzle conjecture]
\label{conj:Knutson.Vakil}
$K_{\lambda,\mu}^{\nu}=\sum_{P} {\rm wt}(P)$ where the sum is over all KV-puzzles of 
 $\Delta_{\lambda,\mu,\nu}$.
\end{conjecture}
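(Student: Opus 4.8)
The plan is to establish the identity $K_{\lambda,\mu}^{\nu}=\sum_{P}\mathrm{wt}(P)$ of Conjecture~\ref{conj:Knutson.Vakil} by reducing it to the tableau rule of \cite{PeYo}, via an explicit weight-preserving bijection between $\mathrm{KV}$-puzzles of $\Delta_{\lambda,\mu,\nu}$ and the (edge-labeled, genomic) tableaux that \cite{PeYo} counts for $K_{\lambda,\mu}^{\nu}$. Schematically, the rule of \cite{PeYo} expresses $K_{\lambda,\mu}^{\nu}=\sum_{T}(-1)^{\ast(T)}\mathrm{wt}(T)$, where $T$ ranges over certain tableaux of shape $\nu/\lambda$, the exponent $\ast(T)$ records the ``extra'' $K$-theoretic cells, and $\mathrm{wt}(T)$ is a product of torus factors $1-\tfrac{t_a}{t_b}$ read off the equivariant cells. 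On the puzzle side, the three classical pieces are governed by the Knutson--Tao dictionary between puzzles and Littlewood--Richardson fillings \cite{Knutson.Tao,KTW}; the equivariant piece is already known to carry a single torus weight; and the two $\mathrm{KV}$-pieces are precisely the additional data that upgrades an ordinary filling to a genomic one. So the approach is: fix the classical dictionary, locate where the equivariant and $\mathrm{KV}$-pieces sit, and prove the resulting assignment is a bijection onto the tableaux of \cite{PeYo} with matching weights.

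I would carry this out in three steps. \textbf{Step 1:} Treat the $K$-theoretic, non-equivariant case first---no equivariant or fifth piece, only the sixth $\mathrm{KV}$-piece subject to its nonlocal placement rule---and identify such puzzles with the $t_i\equiv t_{i+1}$ specialization of \cite{PeYo}, i.e.\ with increasing/genomic $K$-theoretic Littlewood--Richardson data; here $\mathrm{wt}(P)=(-1)^{\#\{\text{sixth pieces}\}}$ must match $(-1)^{\ast(T)}$. \textbf{Step 2:} Reinstate the equivariant and fifth pieces. For each equivariant piece, follow the $\searrow$ and $\swarrow$ arrows to its pair $(a,b)$ on the $\nu$-side and check that $(a,b)$ coincides with the row/column indices producing the factor $1-\tfrac{t_a}{t_b}$ in \cite{PeYo}; verify that the clause ``the fifth piece may be placed only if the equivariant piece is attached to its left'' corresponds exactly to the admissibility constraint on edge-labels in genomic tableaux. \textbf{Step 3:} Assemble: show the map $P\mapsto T$ is well defined, injective, and surjective, and that $\mathrm{wt}(P)=(-1)^{\ast(T)}\mathrm{wt}(T)$. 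Summing over $P$ then yields $\sum_P \mathrm{wt}(P)=\sum_T(-1)^{\ast(T)}\mathrm{wt}(T)=K_{\lambda,\mu}^{\nu}$.

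The main obstacle will be the nonlocal condition on the sixth piece: ``it may only be placed (completing top to bottom, left to right) if the edges to its right are a series of horizontal $0$'s followed by a $1$.'' Every standard puzzle bijection is built from local rhombus manipulations, and a genuinely nonlocal constraint resists both a recursive construction of the bijection and the inductive check that no puzzle is over- or under-counted. I would attack this by reading the nonlocal clause as a statement about an entire row of the partially filled puzzle, translating it along the $K$-rectification/jeu-de-taquin that realizes the bijection into a condition on one row of the evolving tableau, and proving it is equivalent to the increasingness hypothesis imposed in \cite{PeYo}. A closely related difficulty---flagged by the ``correction'' mentioned in the abstract---is that the literal statement may already fail in small cases, so a direct check on, say, $\mathrm{Gr}_2(\complexes^4)$ is prudent; any mismatch found there will localize precisely the clause (most plausibly the nonlocal rule, or the exact fillings of the $\mathrm{KV}$-pieces) whose adjustment is needed before the bijection, and hence the identity, closes up.
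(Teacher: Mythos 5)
The statement you are trying to prove is false, and the paper's treatment of Conjecture~\ref{conj:Knutson.Vakil} is a \emph{refutation}, not a proof: for ${\rm Gr}_2(\complexes^5)$ one computes (via double Grothendieck polynomials) $K_{01001,00101}^{10010}=-(1-\tfrac{t_2}{t_4})$, whereas there are six KV-puzzles $P_1,\ldots,P_6$ of $\Delta_{\lambda,\mu,\nu}$ and their weights sum to ${\rm wt}(P_1)+\cdots+{\rm wt}(P_6)$, which exceeds the true coefficient by ${\rm wt}(P_1)+{\rm wt}(P_4)\neq 0$. So no bijection of the kind you outline in Steps 1--3 can exist for the unmodified rule: the set of KV-puzzles is strictly too large, and the culprit is exactly the piece you flagged as the ``main obstacle,'' namely the nonlocal placement condition on the sixth (purple) KV-piece, which admits fillings (such as $P_1$ and $P_4$ above) that correspond to no tableau of \cite{PeYo}. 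Your closing caveat --- that a small-case check might force an adjustment of that clause --- is the right instinct, but as written your proposal is an attempted proof of a statement that a direct check on $\mathrm{Gr}_2(\complexes^5)$ already kills; the plan cannot be repaired without first changing the rule being proved.

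What the paper actually does is precisely that repair followed by (essentially) your strategy: it replaces the nonlocal condition by the local requirement that the purple piece occur only inside two specified combination pieces, yielding the modified KV-puzzles of Theorem~\ref{thm:main}, and it reformulates the tableau rule of \cite{PeYo} with star/edge factors (Theorem~\ref{thm:reformulated}) so that signs and equivariant weights match termwise. The proof of Theorem~\ref{thm:main} is then a weight-preserving bijection in the spirit of Tao's ``proof without words'' and Kreiman's equivariant extension, organized by decomposing each track $\pi_i$ of a puzzle according to a formal grammar and matching grammar letters to box labels, edge labels, and starred labels. So your architecture (classical dictionary, then equivariant factors, then the $K$-pieces as genomic data) is sound for the \emph{corrected} rule; applied to the literal conjecture it has an unfillable gap, because the identity it aims to establish is false.
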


We consider the structure coefficient $K_{01001,00101}^{10010}$  for ${\rm Gr}_2({\mathbb C}^5)$. The reader can check that there are six KV-puzzles $P_1,P_2,\ldots,P_6$ with the indicated weights. Henceforth, we color-code the six puzzle pieces black, white, grey, green, yellow and purple, respectively.

\begin{picture}(240,113)
\put(50,10){${\rm wt}(P_1)=-1$}
\put(0,15){\Scale[0.6]{
\begin{tikzpicture}[line cap=round,line join=round,>=triangle 45,x=1.0cm,y=1.0cm]
\clip(-2.47,-1.02) rectangle (7.63,5.36);
\begin{tiny}
\fill[color=black,fill=cccccc,fill opacity=1.0] (0,0) -- (0.5,0.87) -- (1.5,0.87) -- (1,0) -- cycle;
\fill[color=black,fill=black,fill opacity=1.0] (0.5,0.87) -- (1.5,0.87) -- (1,1.73) -- cycle;
\fill[color=black,fill=cccccc,fill opacity=1.0] (2.5,4.33) -- (3,3.46) -- (2.5,2.6) -- (2,3.46) -- cycle;
\fill[color=black,fill=cccccc,fill opacity=1.0] (3,3.46) -- (3.5,2.6) -- (3,1.73) -- (2.5,2.6) -- cycle;
\fill[color=black,fill=black,fill opacity=1.0] (3.5,2.6) -- (3,1.73) -- (4,1.73) -- cycle;
\fill[color=black,fill=cccccc,fill opacity=1.0] (3.5,0.87) -- (4.5,0.87) -- (5,0) -- (4,0) -- cycle;
\fill[color=black,fill=black,fill opacity=1.0] (3.5,0.87) -- (3,0) -- (4,0) -- cycle;
\fill[color=black,fill=cccccc,fill opacity=1.0] (3,1.73) -- (2.5,0.87) -- (3.5,0.87) -- (4,1.73) -- cycle;
\fill[color=black,fill=black,fill opacity=1.0] (2.5,0.87) -- (3.5,0.87) -- (3,0) -- cycle;
\fill[color=black,fill=cccccc,fill opacity=1.0] (1.5,0.87) -- (2.5,0.87) -- (3,0) -- (2,0) -- cycle;
\fill[color=black,fill=zzqqzz,fill opacity=1.0] (1,1.73) -- (2,1.73) -- (1.5,0.87) -- cycle;
\draw [color=black] (0,0)-- (5,0);
\draw [color=black] (5,0)-- (2.5,4.33);
\draw [color=black] (2.5,4.33)-- (0,0);
\draw [color=black] (0.5,0.87)-- (1,0);
\draw [color=black] (1,1.73)-- (2,0);
\draw [color=black] (1.5,2.6)-- (3,0);
\draw [color=black] (2,3.46)-- (4,0);
\draw [color=black] (0.5,0.87)-- (4.5,0.87);
\draw [color=black] (1,1.73)-- (4,1.73);
\draw [color=black] (1.5,2.6)-- (3.5,2.6);
\draw [color=black] (2,3.46)-- (3,3.46);
\draw [color=black] (1,0)-- (3,3.46);
\draw [color=black] (3.5,2.6)-- (2,0);
\draw [color=black] (3,0)-- (4,1.73);
\draw [color=black] (4.5,0.87)-- (4,0);
\draw [color=black] (0,0)-- (0.5,0.87);
\draw [color=black] (0.5,0.87)-- (1.5,0.87);
\draw [color=black] (1.5,0.87)-- (1,0);
\draw [color=black] (1,0)-- (0,0);
\draw [color=black] (0.5,0.87)-- (1.5,0.87);
\draw [color=black] (1.5,0.87)-- (1,1.73);
\draw [color=black] (1,1.73)-- (0.5,0.87);
\draw [color=black] (2.5,4.33)-- (3,3.46);
\draw [color=black] (3,3.46)-- (2.5,2.6);
\draw [color=black] (2.5,2.6)-- (2,3.46);
\draw [color=black] (2,3.46)-- (2.5,4.33);
\draw [color=black] (3,3.46)-- (3.5,2.6);
\draw [color=black] (3.5,2.6)-- (3,1.73);
\draw [color=black] (3,1.73)-- (2.5,2.6);
\draw [color=black] (2.5,2.6)-- (3,3.46);
\draw [color=black] (3.5,2.6)-- (3,1.73);
\draw [color=black] (3,1.73)-- (4,1.73);
\draw [color=black] (4,1.73)-- (3.5,2.6);
\draw [color=black] (3.5,0.87)-- (4.5,0.87);
\draw [color=black] (4.5,0.87)-- (5,0);
\draw [color=black] (5,0)-- (4,0);
\draw [color=black] (4,0)-- (3.5,0.87);
\draw [color=black] (3.5,0.87)-- (3,0);
\draw [color=black] (3,0)-- (4,0);
\draw [color=black] (4,0)-- (3.5,0.87);
\draw [color=black] (3,1.73)-- (2.5,0.87);
\draw [color=black] (2.5,0.87)-- (3.5,0.87);
\draw [color=black] (3.5,0.87)-- (4,1.73);
\draw [color=black] (4,1.73)-- (3,1.73);
\draw [color=black] (2.5,0.87)-- (3.5,0.87);
\draw [color=black] (3.5,0.87)-- (3,0);
\draw [color=black] (3,0)-- (2.5,0.87);
\draw [color=black] (1.5,0.87)-- (2.5,0.87);
\draw [color=black] (2.5,0.87)-- (3,0);
\draw [color=black] (3,0)-- (2,0);
\draw [color=black] (2,0)-- (1.5,0.87);
\draw [color=black] (1,1.73)-- (2,1.73);
\draw [color=black] (2,1.73)-- (1.5,0.87);
\draw [color=black] (1.5,0.87)-- (1,1.73);
\fill [color=black] (0,0) circle (1.5pt);
\fill [color=black] (5,0) circle (1.5pt);
\fill [color=black] (2.5,4.33) circle (1.5pt);
\fill [color=black] (1,0) circle (1.5pt);
\fill [color=black] (2,0) circle (1.5pt);
\fill [color=black] (3,0) circle (1.5pt);
\fill [color=black] (4,0) circle (1.5pt);
\fill [color=black] (4.5,0.87) circle (1.5pt);
\fill [color=black] (4,1.73) circle (1.5pt);
\fill [color=black] (3.5,2.6) circle (1.5pt);
\fill [color=black] (3,3.46) circle (1.5pt);
\fill [color=black] (0.5,0.87) circle (1.5pt);
\fill [color=black] (1,1.73) circle (1.5pt);
\fill [color=black] (1.5,2.6) circle (1.5pt);
\fill [color=black] (2,3.46) circle (1.5pt);
\fill [color=black] (1.5,0.87) circle (1.5pt);
\fill [color=black] (2.5,2.6) circle (1.5pt);
\fill [color=black] (3,1.73) circle (1.5pt);
\fill [color=black] (3.5,0.87) circle (1.5pt);
\fill [color=black] (2.5,0.87) circle (1.5pt);
\fill [color=black] (2,1.73) circle (1.5pt);
\end{tiny}
\end{tikzpicture}}}

\put(170,10){${\rm wt}(P_2)=-1$}
\put(120,15){\Scale[0.6]{
\begin{tikzpicture}[line cap=round,line join=round,>=triangle 45,x=1.0cm,y=1.0cm]
\clip(-2.47,-1.02) rectangle (7.63,5.36);
\begin{tiny}
\fill[color=cccccc,fill=cccccc,fill opacity=1.0] (0,0) -- (0.5,0.87) -- (1.5,0.87) -- (1,0) -- cycle;
\fill[color=black,fill=black,fill opacity=1.0] (0.5,0.87) -- (1.5,0.87) -- (1,1.73) -- cycle;
\fill[color=cccccc,fill=cccccc,fill opacity=1.0] (2.5,4.33) -- (3,3.46) -- (2.5,2.6) -- (2,3.46) -- cycle;
\fill[color=cccccc,fill=cccccc,fill opacity=1.0] (3,3.46) -- (3.5,2.6) -- (3,1.73) -- (2.5,2.6) -- cycle;
\fill[color=black,fill=black,fill opacity=1.0] (3.5,2.6) -- (3,1.73) -- (4,1.73) -- cycle;
\fill[color=cccccc,fill=cccccc,fill opacity=1.0] (3.5,0.87) -- (4.5,0.87) -- (5,0) -- (4,0) -- cycle;
\fill[color=black,fill=black,fill opacity=1.0] (3.5,0.87) -- (3,0) -- (4,0) -- cycle;
\fill[color=cccccc,fill=cccccc,fill opacity=1.0] (3,1.73) -- (2.5,0.87) -- (3.5,0.87) -- (4,1.73) -- cycle;
\fill[color=black,fill=black,fill opacity=1.0] (2.5,0.87) -- (3.5,0.87) -- (3,0) -- cycle;
\fill[color=zzqqzz,fill=zzqqzz,fill opacity=1.0] (3,1.73) -- (2.5,0.87) -- (2,1.73) -- cycle;
\fill[color=cccccc,fill=cccccc,fill opacity=1.0] (1,1.73) -- (2,1.73) -- (2.5,0.87) -- (1.5,0.87) -- cycle;
\draw [color=black] (0,0)-- (5,0);
\draw [color=black] (5,0)-- (2.5,4.33);
\draw [color=black] (2.5,4.33)-- (0,0);
\draw [color=black] (0.5,0.87)-- (1,0);
\draw [color=black] (1,1.73)-- (2,0);
\draw [color=black] (1.5,2.6)-- (3,0);
\draw [color=black] (2,3.46)-- (4,0);
\draw [color=black] (0.5,0.87)-- (4.5,0.87);
\draw [color=black] (1,1.73)-- (4,1.73);
\draw [color=black] (1.5,2.6)-- (3.5,2.6);
\draw [color=black] (2,3.46)-- (3,3.46);
\draw [color=black] (1,0)-- (3,3.46);
\draw [color=black] (3.5,2.6)-- (2,0);
\draw [color=black] (3,0)-- (4,1.73);
\draw [color=black] (4.5,0.87)-- (4,0);
\draw [color=black] (0,0)-- (0.5,0.87);
\draw [color=black] (0.5,0.87)-- (1.5,0.87);
\draw [color=black] (1.5,0.87)-- (1,0);
\draw [color=black] (1,0)-- (0,0);
\draw [color=black] (0.5,0.87)-- (1.5,0.87);
\draw [color=black] (1.5,0.87)-- (1,1.73);
\draw [color=black] (1,1.73)-- (0.5,0.87);
\draw [color=black] (2.5,4.33)-- (3,3.46);
\draw [color=black] (3,3.46)-- (2.5,2.6);
\draw [color=black] (2.5,2.6)-- (2,3.46);
\draw [color=black] (2,3.46)-- (2.5,4.33);
\draw [color=black] (3,3.46)-- (3.5,2.6);
\draw [color=black] (3.5,2.6)-- (3,1.73);
\draw [color=black] (3,1.73)-- (2.5,2.6);
\draw [color=black] (2.5,2.6)-- (3,3.46);
\draw [color=black] (3.5,2.6)-- (3,1.73);
\draw [color=black] (3,1.73)-- (4,1.73);
\draw [color=black] (4,1.73)-- (3.5,2.6);
\draw [color=black] (3.5,0.87)-- (4.5,0.87);
\draw [color=black] (4.5,0.87)-- (5,0);
\draw [color=black] (5,0)-- (4,0);
\draw [color=black] (4,0)-- (3.5,0.87);
\draw [color=black] (3.5,0.87)-- (3,0);
\draw [color=black] (3,0)-- (4,0);
\draw [color=black] (4,0)-- (3.5,0.87);
\draw [color=black] (3,1.73)-- (2.5,0.87);
\draw [color=black] (2.5,0.87)-- (3.5,0.87);
\draw [color=black] (3.5,0.87)-- (4,1.73);
\draw [color=black] (4,1.73)-- (3,1.73);
\draw [color=black] (2.5,0.87)-- (3.5,0.87);
\draw [color=black] (3.5,0.87)-- (3,0);
\draw [color=black] (3,0)-- (2.5,0.87);
\draw [color=black] (1.5,0.87)-- (2.5,0.87);
\draw [color=black] (2.5,0.87)-- (3,0);
\draw [color=black] (3,0)-- (2,0);
\draw [color=black] (2,0)-- (1.5,0.87);
\draw [color=black] (1,1.73)-- (2,1.73);
\draw [color=black] (2,1.73)-- (1.5,0.87);
\draw [color=black] (1.5,0.87)-- (1,1.73);
\fill [color=black] (0,0) circle (1.5pt);
\fill [color=black] (5,0) circle (1.5pt);
\fill [color=black] (2.5,4.33) circle (1.5pt);
\fill [color=black] (1,0) circle (1.5pt);
\fill [color=black] (2,0) circle (1.5pt);
\fill [color=black] (3,0) circle (1.5pt);
\fill [color=black] (4,0) circle (1.5pt);
\fill [color=black] (4.5,0.87) circle (1.5pt);
\fill [color=black] (4,1.73) circle (1.5pt);
\fill [color=black] (3.5,2.6) circle (1.5pt);
\fill [color=black] (3,3.46) circle (1.5pt);
\fill [color=black] (0.5,0.87) circle (1.5pt);
\fill [color=black] (1,1.73) circle (1.5pt);
\fill [color=black] (1.5,2.6) circle (1.5pt);
\fill [color=black] (2,3.46) circle (1.5pt);
\fill [color=black] (1.5,0.87) circle (1.5pt);
\fill [color=black] (2.5,2.6) circle (1.5pt);
\fill [color=black] (3,1.73) circle (1.5pt);
\fill [color=black] (3.5,0.87) circle (1.5pt);
\fill [color=black] (2.5,0.87) circle (1.5pt);
\fill [color=black] (2,1.73) circle (1.5pt);
\end{tiny}
\end{tikzpicture}}}

\put(270,10){${\rm wt}(P_3)=(-1)^2(1-\frac{t_3}{t_4})$}
\put(240,15){\Scale[0.6]{
\begin{tikzpicture}[line cap=round,line join=round,>=triangle 45,x=1.0cm,y=1.0cm]
\clip(-2.47,-1.02) rectangle (7.63,5.36);
\begin{tiny}
\fill[color=cccccc,fill=cccccc,fill opacity=1.0] (0,0) -- (0.5,0.87) -- (1.5,0.87) -- (1,0) -- cycle;
\fill[color=black,fill=black,fill opacity=1.0] (0.5,0.87) -- (1.5,0.87) -- (1,1.73) -- cycle;
\fill[color=cccccc,fill=cccccc,fill opacity=1.0] (2.5,4.33) -- (3,3.46) -- (2.5,2.6) -- (2,3.46) -- cycle;
\fill[color=cccccc,fill=cccccc,fill opacity=1.0] (3,3.46) -- (3.5,2.6) -- (3,1.73) -- (2.5,2.6) -- cycle;
\fill[color=black,fill=black,fill opacity=1.0] (3.5,2.6) -- (3,1.73) -- (4,1.73) -- cycle;
\fill[color=cccccc,fill=cccccc,fill opacity=1.0] (3.5,0.87) -- (4.5,0.87) -- (5,0) -- (4,0) -- cycle;
\fill[color=black,fill=black,fill opacity=1.0] (3.5,0.87) -- (3,0) -- (4,0) -- cycle;
\fill[color=cccccc,fill=cccccc,fill opacity=1.0] (3,1.73) -- (2.5,0.87) -- (3.5,0.87) -- (4,1.73) -- cycle;
\fill[color=black,fill=black,fill opacity=1.0] (2.5,0.87) -- (3.5,0.87) -- (3,0) -- cycle;
\fill[color=zzqqzz,fill=zzqqzz,fill opacity=1.0] (3,1.73) -- (2.5,0.87) -- (2,1.73) -- cycle;
\fill[color=ttfftt,fill=ttfftt,fill opacity=1.0] (2,1.73) -- (1.5,0.87) -- (2,0) -- (2.5,0.87) -- cycle;
\fill[color=zzqqzz,fill=zzqqzz,fill opacity=1.0] (2,1.73) -- (1,1.73) -- (1.5,0.87) -- cycle;
\draw [color=black] (0,0)-- (5,0);
\draw [color=black] (5,0)-- (2.5,4.33);
\draw [color=black] (2.5,4.33)-- (0,0);
\draw [color=black] (0.5,0.87)-- (1,0);
\draw [color=black] (1,1.73)-- (2,0);
\draw [color=black] (1.5,2.6)-- (3,0);
\draw [color=black] (2,3.46)-- (4,0);
\draw [color=black] (0.5,0.87)-- (4.5,0.87);
\draw [color=black] (1,1.73)-- (4,1.73);
\draw [color=black] (1.5,2.6)-- (3.5,2.6);
\draw [color=black] (2,3.46)-- (3,3.46);
\draw [color=black] (1,0)-- (3,3.46);
\draw [color=black] (3.5,2.6)-- (2,0);
\draw [color=black] (3,0)-- (4,1.73);
\draw [color=black] (4.5,0.87)-- (4,0);
\draw [color=black] (0,0)-- (0.5,0.87);
\draw [color=black] (0.5,0.87)-- (1.5,0.87);
\draw [color=black] (1.5,0.87)-- (1,0);
\draw [color=black] (1,0)-- (0,0);
\draw [color=black] (0.5,0.87)-- (1.5,0.87);
\draw [color=black] (1.5,0.87)-- (1,1.73);
\draw [color=black] (1,1.73)-- (0.5,0.87);
\draw [color=black] (2.5,4.33)-- (3,3.46);
\draw [color=black] (3,3.46)-- (2.5,2.6);
\draw [color=black] (2.5,2.6)-- (2,3.46);
\draw [color=black] (2,3.46)-- (2.5,4.33);
\draw [color=black] (3,3.46)-- (3.5,2.6);
\draw [color=black] (3.5,2.6)-- (3,1.73);
\draw [color=black] (3,1.73)-- (2.5,2.6);
\draw [color=black] (2.5,2.6)-- (3,3.46);
\draw [color=black] (3.5,2.6)-- (3,1.73);
\draw [color=black] (3,1.73)-- (4,1.73);
\draw [color=black] (4,1.73)-- (3.5,2.6);
\draw [color=black] (3.5,0.87)-- (4.5,0.87);
\draw [color=black] (4.5,0.87)-- (5,0);
\draw [color=black] (5,0)-- (4,0);
\draw [color=black] (4,0)-- (3.5,0.87);
\draw [color=black] (3.5,0.87)-- (3,0);
\draw [color=black] (3,0)-- (4,0);
\draw [color=black] (4,0)-- (3.5,0.87);
\draw [color=black] (3,1.73)-- (2.5,0.87);
\draw [color=black] (2.5,0.87)-- (3.5,0.87);
\draw [color=black] (3.5,0.87)-- (4,1.73);
\draw [color=black] (4,1.73)-- (3,1.73);
\draw [color=black] (2.5,0.87)-- (3.5,0.87);
\draw [color=black] (3.5,0.87)-- (3,0);
\draw [color=black] (3,0)-- (2.5,0.87);
\draw [color=black] (1.5,0.87)-- (2.5,0.87);
\draw [color=black] (2.5,0.87)-- (3,0);
\draw [color=black] (3,0)-- (2,0);
\draw [color=black] (2,0)-- (1.5,0.87);
\draw [color=black] (1,1.73)-- (2,1.73);
\draw [color=black] (2,1.73)-- (1.5,0.87);
\draw [color=black] (1.5,0.87)-- (1,1.73);
\fill [color=black] (0,0) circle (1.5pt);
\fill [color=black] (5,0) circle (1.5pt);
\fill [color=black] (2.5,4.33) circle (1.5pt);
\fill [color=black] (1,0) circle (1.5pt);
\fill [color=black] (2,0) circle (1.5pt);
\fill [color=black] (3,0) circle (1.5pt);
\fill [color=black] (4,0) circle (1.5pt);
\fill [color=black] (4.5,0.87) circle (1.5pt);
\fill [color=black] (4,1.73) circle (1.5pt);
\fill [color=black] (3.5,2.6) circle (1.5pt);
\fill [color=black] (3,3.46) circle (1.5pt);
\fill [color=black] (0.5,0.87) circle (1.5pt);
\fill [color=black] (1,1.73) circle (1.5pt);
\fill [color=black] (1.5,2.6) circle (1.5pt);
\fill [color=black] (2,3.46) circle (1.5pt);
\fill [color=black] (1.5,0.87) circle (1.5pt);
\fill [color=black] (2.5,2.6) circle (1.5pt);
\fill [color=black] (3,1.73) circle (1.5pt);
\fill [color=black] (3.5,0.87) circle (1.5pt);
\fill [color=black] (2.5,0.87) circle (1.5pt);
\fill [color=black] (2,1.73) circle (1.5pt);
\end{tiny}
\end{tikzpicture}}}
\end{picture}

\begin{picture}(240,113)
\put(20,10){${\rm wt}(P_4)=(-1)^2(1-\frac{t_2}{t_3})$}
\put(0,15){\Scale[0.6]{
\begin{tikzpicture}[line cap=round,line join=round,>=triangle 45,x=1.0cm,y=1.0cm]
\clip(-2.47,-1.02) rectangle (7.63,5.36);
\begin{tiny}
\fill[color=cccccc,fill=cccccc,fill opacity=1.0] (0,0) -- (0.5,0.87) -- (1.5,0.87) -- (1,0) -- cycle;
\fill[color=black,fill=black,fill opacity=1.0] (0.5,0.87) -- (1.5,0.87) -- (1,1.73) -- cycle;
\fill[color=cccccc,fill=cccccc,fill opacity=1.0] (2.5,4.33) -- (3,3.46) -- (2.5,2.6) -- (2,3.46) -- cycle;
\fill[color=cccccc,fill=cccccc,fill opacity=1.0] (3,3.46) -- (3.5,2.6) -- (3,1.73) -- (2.5,2.6) -- cycle;
\fill[color=black,fill=black,fill opacity=1.0] (3.5,2.6) -- (3,1.73) -- (4,1.73) -- cycle;
\fill[color=cccccc,fill=cccccc,fill opacity=1.0] (3.5,0.87) -- (4.5,0.87) -- (5,0) -- (4,0) -- cycle;
\fill[color=black,fill=black,fill opacity=1.0] (3.5,0.87) -- (3,0) -- (4,0) -- cycle;
\fill[color=zzttqq,fill=zzttqq,fill opacity=1.0] (3,1.73) -- (4,1.73) -- (3.5,0.87) -- cycle;
\fill[color=ttfftt,fill=ttfftt,fill opacity=1.0] (3,1.73) -- (3.5,0.87) -- (3,0) -- (2.5,0.87) -- cycle;
\fill[color=cccccc,fill=cccccc,fill opacity=1.0] (1.5,0.87) -- (2.5,0.87) -- (3,0) -- (2,0) -- cycle;
\fill[color=zzqqzz,fill=zzqqzz,fill opacity=1.0] (1,1.73) -- (2,1.73) -- (1.5,0.87) -- cycle;
\draw [color=black] (0,0)-- (5,0);
\draw [color=black] (5,0)-- (2.5,4.33);
\draw [color=black] (2.5,4.33)-- (0,0);
\draw [color=black] (0.5,0.87)-- (1,0);
\draw [color=black] (1,1.73)-- (2,0);
\draw [color=black] (1.5,2.6)-- (3,0);
\draw [color=black] (2,3.46)-- (4,0);
\draw [color=black] (0.5,0.87)-- (4.5,0.87);
\draw [color=black] (1,1.73)-- (4,1.73);
\draw [color=black] (1.5,2.6)-- (3.5,2.6);
\draw [color=black] (2,3.46)-- (3,3.46);
\draw [color=black] (1,0)-- (3,3.46);
\draw [color=black] (3.5,2.6)-- (2,0);
\draw [color=black] (3,0)-- (4,1.73);
\draw [color=black] (4.5,0.87)-- (4,0);
\draw [color=black] (0,0)-- (0.5,0.87);
\draw [color=black] (0.5,0.87)-- (1.5,0.87);
\draw [color=black] (1.5,0.87)-- (1,0);
\draw [color=black] (1,0)-- (0,0);
\draw [color=black] (0.5,0.87)-- (1.5,0.87);
\draw [color=black] (1.5,0.87)-- (1,1.73);
\draw [color=black] (1,1.73)-- (0.5,0.87);
\draw [color=black] (2.5,4.33)-- (3,3.46);
\draw [color=black] (3,3.46)-- (2.5,2.6);
\draw [color=black] (2.5,2.6)-- (2,3.46);
\draw [color=black] (2,3.46)-- (2.5,4.33);
\draw [color=black] (3,3.46)-- (3.5,2.6);
\draw [color=black] (3.5,2.6)-- (3,1.73);
\draw [color=black] (3,1.73)-- (2.5,2.6);
\draw [color=black] (2.5,2.6)-- (3,3.46);
\draw [color=black] (3.5,2.6)-- (3,1.73);
\draw [color=black] (3,1.73)-- (4,1.73);
\draw [color=black] (4,1.73)-- (3.5,2.6);
\draw [color=black] (3.5,0.87)-- (4.5,0.87);
\draw [color=black] (4.5,0.87)-- (5,0);
\draw [color=black] (5,0)-- (4,0);
\draw [color=black] (4,0)-- (3.5,0.87);
\draw [color=black] (3.5,0.87)-- (3,0);
\draw [color=black] (3,0)-- (4,0);
\draw [color=black] (4,0)-- (3.5,0.87);
\draw [color=black] (3,1.73)-- (2.5,0.87);
\draw [color=black] (2.5,0.87)-- (3.5,0.87);
\draw [color=black] (3.5,0.87)-- (4,1.73);
\draw [color=black] (4,1.73)-- (3,1.73);
\draw [color=black] (2.5,0.87)-- (3.5,0.87);
\draw [color=black] (3.5,0.87)-- (3,0);
\draw [color=black] (3,0)-- (2.5,0.87);
\draw [color=black] (1.5,0.87)-- (2.5,0.87);
\draw [color=black] (2.5,0.87)-- (3,0);
\draw [color=black] (3,0)-- (2,0);
\draw [color=black] (2,0)-- (1.5,0.87);
\draw [color=black] (1,1.73)-- (2,1.73);
\draw [color=black] (2,1.73)-- (1.5,0.87);
\draw [color=black] (1.5,0.87)-- (1,1.73);
\fill [color=black] (0,0) circle (1.5pt);
\fill [color=black] (5,0) circle (1.5pt);
\fill [color=black] (2.5,4.33) circle (1.5pt);
\fill [color=black] (1,0) circle (1.5pt);
\fill [color=black] (2,0) circle (1.5pt);
\fill [color=black] (3,0) circle (1.5pt);
\fill [color=black] (4,0) circle (1.5pt);
\fill [color=black] (4.5,0.87) circle (1.5pt);
\fill [color=black] (4,1.73) circle (1.5pt);
\fill [color=black] (3.5,2.6) circle (1.5pt);
\fill [color=black] (3,3.46) circle (1.5pt);
\fill [color=black] (0.5,0.87) circle (1.5pt);
\fill [color=black] (1,1.73) circle (1.5pt);
\fill [color=black] (1.5,2.6) circle (1.5pt);
\fill [color=black] (2,3.46) circle (1.5pt);
\fill [color=black] (1.5,0.87) circle (1.5pt);
\fill [color=black] (2.5,2.6) circle (1.5pt);
\fill [color=black] (3,1.73) circle (1.5pt);
\fill [color=black] (3.5,0.87) circle (1.5pt);
\fill [color=black] (2.5,0.87) circle (1.5pt);
\fill [color=black] (2,1.73) circle (1.5pt);
\end{tiny}
\end{tikzpicture}}}

\put(145,10){${\rm wt}(P_5)=(-1)^2(1-\frac{t_2}{t_3})$}
\put(120,15){\Scale[0.6]{
\begin{tikzpicture}[line cap=round,line join=round,>=triangle 45,x=1.0cm,y=1.0cm]
\clip(-2.47,-1.02) rectangle (7.63,5.36);
\begin{tiny}
\fill[color=cccccc,fill=cccccc,fill opacity=1.0] (0,0) -- (0.5,0.87) -- (1.5,0.87) -- (1,0) -- cycle;
\fill[color=black,fill=black,fill opacity=1.0] (0.5,0.87) -- (1.5,0.87) -- (1,1.73) -- cycle;
\fill[color=cccccc,fill=cccccc,fill opacity=1.0] (2.5,4.33) -- (3,3.46) -- (2.5,2.6) -- (2,3.46) -- cycle;
\fill[color=cccccc,fill=cccccc,fill opacity=1.0] (3,3.46) -- (3.5,2.6) -- (3,1.73) -- (2.5,2.6) -- cycle;
\fill[color=black,fill=black,fill opacity=1.0] (3.5,2.6) -- (3,1.73) -- (4,1.73) -- cycle;
\fill[color=cccccc,fill=cccccc,fill opacity=1.0] (3.5,0.87) -- (4.5,0.87) -- (5,0) -- (4,0) -- cycle;
\fill[color=black,fill=black,fill opacity=1.0] (3.5,0.87) -- (3,0) -- (4,0) -- cycle;
\fill[color=zzttqq,fill=zzttqq,fill opacity=1.0] (3,1.73) -- (4,1.73) -- (3.5,0.87) -- cycle;
\fill[color=ttfftt,fill=ttfftt,fill opacity=1.0] (3,1.73) -- (3.5,0.87) -- (3,0) -- (2.5,0.87) -- cycle;
\fill[color=zzqqzz,fill=zzqqzz,fill opacity=1.0] (2,1.73) -- (3,1.73) -- (2.5,0.87) -- cycle;
\fill[color=cccccc,fill=cccccc,fill opacity=1.0] (1,1.73) -- (2,1.73) -- (2.5,0.87) -- (1.5,0.87) -- cycle;
\draw [color=black] (0,0)-- (5,0);
\draw [color=black] (5,0)-- (2.5,4.33);
\draw [color=black] (2.5,4.33)-- (0,0);
\draw [color=black] (0.5,0.87)-- (1,0);
\draw [color=black] (1,1.73)-- (2,0);
\draw [color=black] (1.5,2.6)-- (3,0);
\draw [color=black] (2,3.46)-- (4,0);
\draw [color=black] (0.5,0.87)-- (4.5,0.87);
\draw [color=black] (1,1.73)-- (4,1.73);
\draw [color=black] (1.5,2.6)-- (3.5,2.6);
\draw [color=black] (2,3.46)-- (3,3.46);
\draw [color=black] (1,0)-- (3,3.46);
\draw [color=black] (3.5,2.6)-- (2,0);
\draw [color=black] (3,0)-- (4,1.73);
\draw [color=black] (4.5,0.87)-- (4,0);
\draw [color=black] (0,0)-- (0.5,0.87);
\draw [color=black] (0.5,0.87)-- (1.5,0.87);
\draw [color=black] (1.5,0.87)-- (1,0);
\draw [color=black] (1,0)-- (0,0);
\draw [color=black] (0.5,0.87)-- (1.5,0.87);
\draw [color=black] (1.5,0.87)-- (1,1.73);
\draw [color=black] (1,1.73)-- (0.5,0.87);
\draw [color=black] (2.5,4.33)-- (3,3.46);
\draw [color=black] (3,3.46)-- (2.5,2.6);
\draw [color=black] (2.5,2.6)-- (2,3.46);
\draw [color=black] (2,3.46)-- (2.5,4.33);
\draw [color=black] (3,3.46)-- (3.5,2.6);
\draw [color=black] (3.5,2.6)-- (3,1.73);
\draw [color=black] (3,1.73)-- (2.5,2.6);
\draw [color=black] (2.5,2.6)-- (3,3.46);
\draw [color=black] (3.5,2.6)-- (3,1.73);
\draw [color=black] (3,1.73)-- (4,1.73);
\draw [color=black] (4,1.73)-- (3.5,2.6);
\draw [color=black] (3.5,0.87)-- (4.5,0.87);
\draw [color=black] (4.5,0.87)-- (5,0);
\draw [color=black] (5,0)-- (4,0);
\draw [color=black] (4,0)-- (3.5,0.87);
\draw [color=black] (3.5,0.87)-- (3,0);
\draw [color=black] (3,0)-- (4,0);
\draw [color=black] (4,0)-- (3.5,0.87);
\draw [color=black] (3,1.73)-- (2.5,0.87);
\draw [color=black] (2.5,0.87)-- (3.5,0.87);
\draw [color=black] (3.5,0.87)-- (4,1.73);
\draw [color=black] (4,1.73)-- (3,1.73);
\draw [color=black] (2.5,0.87)-- (3.5,0.87);
\draw [color=black] (3.5,0.87)-- (3,0);
\draw [color=black] (3,0)-- (2.5,0.87);
\draw [color=black] (1.5,0.87)-- (2.5,0.87);
\draw [color=black] (2.5,0.87)-- (3,0);
\draw [color=black] (3,0)-- (2,0);
\draw [color=black] (2,0)-- (1.5,0.87);
\draw [color=black] (1,1.73)-- (2,1.73);
\draw [color=black] (2,1.73)-- (1.5,0.87);
\draw [color=black] (1.5,0.87)-- (1,1.73);
\fill [color=black] (0,0) circle (1.5pt);
\fill [color=black] (5,0) circle (1.5pt);
\fill [color=black] (2.5,4.33) circle (1.5pt);
\fill [color=black] (1,0) circle (1.5pt);
\fill [color=black] (2,0) circle (1.5pt);
\fill [color=black] (3,0) circle (1.5pt);
\fill [color=black] (4,0) circle (1.5pt);
\fill [color=black] (4.5,0.87) circle (1.5pt);
\fill [color=black] (4,1.73) circle (1.5pt);
\fill [color=black] (3.5,2.6) circle (1.5pt);
\fill [color=black] (3,3.46) circle (1.5pt);
\fill [color=black] (0.5,0.87) circle (1.5pt);
\fill [color=black] (1,1.73) circle (1.5pt);
\fill [color=black] (1.5,2.6) circle (1.5pt);
\fill [color=black] (2,3.46) circle (1.5pt);
\fill [color=black] (1.5,0.87) circle (1.5pt);
\fill [color=black] (2.5,2.6) circle (1.5pt);
\fill [color=black] (3,1.73) circle (1.5pt);
\fill [color=black] (3.5,0.87) circle (1.5pt);
\fill [color=black] (2.5,0.87) circle (1.5pt);
\fill [color=black] (2,1.73) circle (1.5pt);
\end{tiny}
\end{tikzpicture}}}

\put(267,10){${\rm wt}(P_6)=(-1)^3 (1-\frac{t_3}{t_4}) (1-\frac{t_2}{t_3})$}
\put(240,15){\Scale[0.6]{
\begin{tikzpicture}[line cap=round,line join=round,>=triangle 45,x=1.0cm,y=1.0cm]
\clip(-2.47,-1.02) rectangle (7.63,5.36);
\begin{tiny}
\fill[color=cccccc,fill=cccccc,fill opacity=1.0] (0,0) -- (0.5,0.87) -- (1.5,0.87) -- (1,0) -- cycle;
\fill[color=black,fill=black,fill opacity=1.0] (0.5,0.87) -- (1.5,0.87) -- (1,1.73) -- cycle;
\fill[color=cccccc,fill=cccccc,fill opacity=1.0] (2.5,4.33) -- (3,3.46) -- (2.5,2.6) -- (2,3.46) -- cycle;
\fill[color=cccccc,fill=cccccc,fill opacity=1.0] (3,3.46) -- (3.5,2.6) -- (3,1.73) -- (2.5,2.6) -- cycle;
\fill[color=black,fill=black,fill opacity=1.0] (3.5,2.6) -- (3,1.73) -- (4,1.73) -- cycle;
\fill[color=cccccc,fill=cccccc,fill opacity=1.0] (3.5,0.87) -- (4.5,0.87) -- (5,0) -- (4,0) -- cycle;
\fill[color=black,fill=black,fill opacity=1.0] (3.5,0.87) -- (3,0) -- (4,0) -- cycle;
\fill[color=zzttqq,fill=zzttqq,fill opacity=1.0] (3,1.73) -- (4,1.73) -- (3.5,0.87) -- cycle;
\fill[color=ttfftt,fill=ttfftt,fill opacity=1.0] (3,1.73) -- (3.5,0.87) -- (3,0) -- (2.5,0.87) -- cycle;
\fill[color=zzqqzz,fill=zzqqzz,fill opacity=1.0] (2,1.73) -- (3,1.73) -- (2.5,0.87) -- cycle;
\fill[color=zzqqzz,fill=zzqqzz,fill opacity=1.0] (1,1.73) -- (2,1.73) -- (1.5,0.87) -- cycle;
\fill[color=ttfftt,fill=ttfftt,fill opacity=1.0] (2,1.73) -- (1.5,0.87) -- (2,0) -- (2.5,0.87) -- cycle;
\draw [color=black] (0,0)-- (5,0);
\draw [color=black] (5,0)-- (2.5,4.33);
\draw [color=black] (2.5,4.33)-- (0,0);
\draw [color=black] (0.5,0.87)-- (1,0);
\draw [color=black] (1,1.73)-- (2,0);
\draw [color=black] (1.5,2.6)-- (3,0);
\draw [color=black] (2,3.46)-- (4,0);
\draw [color=black] (0.5,0.87)-- (4.5,0.87);
\draw [color=black] (1,1.73)-- (4,1.73);
\draw [color=black] (1.5,2.6)-- (3.5,2.6);
\draw [color=black] (2,3.46)-- (3,3.46);
\draw [color=black] (1,0)-- (3,3.46);
\draw [color=black] (3.5,2.6)-- (2,0);
\draw [color=black] (3,0)-- (4,1.73);
\draw [color=black] (4.5,0.87)-- (4,0);
\draw [color=black] (0,0)-- (0.5,0.87);
\draw [color=black] (0.5,0.87)-- (1.5,0.87);
\draw [color=black] (1.5,0.87)-- (1,0);
\draw [color=black] (1,0)-- (0,0);
\draw [color=black] (0.5,0.87)-- (1.5,0.87);
\draw [color=black] (1.5,0.87)-- (1,1.73);
\draw [color=black] (1,1.73)-- (0.5,0.87);
\draw [color=black] (2.5,4.33)-- (3,3.46);
\draw [color=black] (3,3.46)-- (2.5,2.6);
\draw [color=black] (2.5,2.6)-- (2,3.46);
\draw [color=black] (2,3.46)-- (2.5,4.33);
\draw [color=black] (3,3.46)-- (3.5,2.6);
\draw [color=black] (3.5,2.6)-- (3,1.73);
\draw [color=black] (3,1.73)-- (2.5,2.6);
\draw [color=black] (2.5,2.6)-- (3,3.46);
\draw [color=black] (3.5,2.6)-- (3,1.73);
\draw [color=black] (3,1.73)-- (4,1.73);
\draw [color=black] (4,1.73)-- (3.5,2.6);
\draw [color=black] (3.5,0.87)-- (4.5,0.87);
\draw [color=black] (4.5,0.87)-- (5,0);
\draw [color=black] (5,0)-- (4,0);
\draw [color=black] (4,0)-- (3.5,0.87);
\draw [color=black] (3.5,0.87)-- (3,0);
\draw [color=black] (3,0)-- (4,0);
\draw [color=black] (4,0)-- (3.5,0.87);
\draw [color=black] (3,1.73)-- (2.5,0.87);
\draw [color=black] (2.5,0.87)-- (3.5,0.87);
\draw [color=black] (3.5,0.87)-- (4,1.73);
\draw [color=black] (4,1.73)-- (3,1.73);
\draw [color=black] (2.5,0.87)-- (3.5,0.87);
\draw [color=black] (3.5,0.87)-- (3,0);
\draw [color=black] (3,0)-- (2.5,0.87);
\draw [color=black] (1.5,0.87)-- (2.5,0.87);
\draw [color=black] (2.5,0.87)-- (3,0);
\draw [color=black] (3,0)-- (2,0);
\draw [color=black] (2,0)-- (1.5,0.87);
\draw [color=black] (1,1.73)-- (2,1.73);
\draw [color=black] (2,1.73)-- (1.5,0.87);
\draw [color=black] (1.5,0.87)-- (1,1.73);
\fill [color=black] (0,0) circle (1.5pt);
\fill [color=black] (5,0) circle (1.5pt);
\fill [color=black] (2.5,4.33) circle (1.5pt);
\fill [color=black] (1,0) circle (1.5pt);
\fill [color=black] (2,0) circle (1.5pt);
\fill [color=black] (3,0) circle (1.5pt);
\fill [color=black] (4,0) circle (1.5pt);
\fill [color=black] (4.5,0.87) circle (1.5pt);
\fill [color=black] (4,1.73) circle (1.5pt);
\fill [color=black] (3.5,2.6) circle (1.5pt);
\fill [color=black] (3,3.46) circle (1.5pt);
\fill [color=black] (0.5,0.87) circle (1.5pt);
\fill [color=black] (1,1.73) circle (1.5pt);
\fill [color=black] (1.5,2.6) circle (1.5pt);
\fill [color=black] (2,3.46) circle (1.5pt);
\fill [color=black] (1.5,0.87) circle (1.5pt);
\fill [color=black] (2.5,2.6) circle (1.5pt);
\fill [color=black] (3,1.73) circle (1.5pt);
\fill [color=black] (3.5,0.87) circle (1.5pt);
\fill [color=black] (2.5,0.87) circle (1.5pt);
\fill [color=black] (2,1.73) circle (1.5pt);
\end{tiny}
\end{tikzpicture}}}
\end{picture}

Using double Grothendieck polynomials \cite{lascoux.schuetzenberger} (see also \cite{fulton.lascoux} and references therein), one computes $K_{01001,00101}^{10010}=-(1-\frac{t_2}{t_4})={\rm wt}(P_2)+{\rm wt}(P_3)+{\rm wt}(P_5)+{\rm wt}(P_6)$. This gives
a counterexample to Conjecture~\ref{conj:Knutson.Vakil}. Actually, this subset of four puzzles witnesses the rule of Theorem~\ref{thm:main} below.

\subsection{A modified puzzle rule} We define a {\bf modified KV-puzzle} to be a KV-puzzle
with the nonlocal condition on the second KV-piece replaced by the requirement that the second KV-piece only appears in the combination pieces
\begin{picture}(10,10)
\put(-5,-6){
\Scale[0.3]{\begin{tikzpicture}[line cap=round,line join=round,>=triangle
45,x=1.0cm,y=1.0cm]
\clip(-0.3,-3.89) rectangle (1.84,-1.8);
\fill[color=zzqqzz,fill=zzqqzz,fill opacity=1.0] (0,-2) -- (1,-2) --
(0.5,-2.87) -- cycle;
\fill[color=qqffqq,fill=qqffqq,fill opacity=1.0] (1,-2) -- (0.5,-2.87) --
(1,-3.73) -- (1.5,-2.87) -- cycle;
\end{tikzpicture}}}
\end{picture}
\; or
\begin{picture}(10,18)
\put(-5,-6){ 
\Scale[0.3]{\begin{tikzpicture}[line cap=round,line join=round,>=triangle
45,x=1.0cm,y=1.0cm]
\clip(-0.23,-3.93) rectangle (2.25,-1.86);
\fill[color=zzqqzz,fill=zzqqzz,fill opacity=1.0] (0,-2) -- (1,-2) --
(0.5,-2.87) -- cycle;
\fill[color=cccccc,fill=cccccc,fill opacity=1.0] (1,-2) -- (0.5,-2.87) --
(1.5,-2.87) -- (2,-2) -- cycle;
\fill[fill=black,fill opacity=1.0] (0.5,-2.87) -- (1.5,-2.87) -- (1,-3.73)
-- cycle;
\end{tikzpicture}}}
\end{picture} \, .

\begin{theorem}
\label{thm:main}
$K_{\lambda,\mu}^{\nu}=\sum_{P} {\rm wt}(P)$ where the sum is over all modified KV-puzzles of $\Delta_{\lambda,\mu,\nu}$.
\end{theorem}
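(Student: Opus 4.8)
The plan is to deduce Theorem~\ref{thm:main} from the tableau rule of \cite{PeYo} by means of an explicit weight-preserving bijection between the modified KV-puzzles of $\Delta_{\lambda,\mu,\nu}$ and the tableaux that \cite{PeYo} enumerates for $K_{\lambda,\mu}^{\nu}$. Write $\mathcal{T}(\lambda,\mu,\nu)$ for that set of tableaux; by \cite{PeYo} we have $K_{\lambda,\mu}^{\nu}=\sum_{T\in\mathcal{T}(\lambda,\mu,\nu)}{\rm wt}(T)$, where each ${\rm wt}(T)$ is a monomial of the form $(-1)^{e(T)}\prod_{j}(1-t_{a_j}/t_{b_j})$, with $e(T)$ recording the ``K-theoretic'' repeated entries of $T$ and the linear factors coming from its equivariant decorations. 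It is therefore enough to construct a bijection $\Phi$ carrying modified KV-puzzles of $\Delta_{\lambda,\mu,\nu}$ onto $\mathcal{T}(\lambda,\mu,\nu)$ with ${\rm wt}(P)={\rm wt}(\Phi(P))$.

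The first step is a local analysis of modified KV-puzzles. A puzzle built only from the black, white, and grey pieces is, after the standard identifications, a Littlewood--Richardson lattice filling whose grey rhombi carve out a skew tableau; the green piece is precisely the Knutson--Tao equivariant piece, recording an equivariant edge-label; and, by the modified rule, the purple KV-piece occurs only inside one of the two prescribed combination pieces, so it can be read off locally as a decoration attached either to a grey rhombus or to a grey-and-black pair. This suggests defining $\Phi(P)$ by scanning $P$ in the top-to-bottom, left-to-right order used to complete puzzles: each grey rhombus produces a box of the output tableau, each green piece produces an equivariant edge-label whose value is read from the boundary segments $a$ and $b$ hit by its $\searrow$ and $\swarrow$ arrows, and each combination piece produces a repeated (``genomic'') entry. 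I would then verify that $\Phi(P)\in\mathcal{T}(\lambda,\mu,\nu)$: the shape and content constraints are forced by the boundary data $\lambda,\mu,\nu$ of $\Delta_{\lambda,\mu,\nu}$, while the ballot and increasingness conditions on $\Phi(P)$ translate the edge-matching rules for adjacent puzzle pieces.

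Weight-preservation is then bookkeeping. Each KV-piece of $P$ contributes $-1$ and accounts for exactly one unit of $e(\Phi(P))$; each green piece contributes $1-t_a/t_b$, and since its $\searrow/\swarrow$ arrows and the tableau's equivariant labelling are governed by the same boundary data, it matches the corresponding linear factor of ${\rm wt}(\Phi(P))$. Hence ${\rm wt}(P)={\rm wt}(\Phi(P))$, so Theorem~\ref{thm:main} follows once $\Phi$ is shown to be a bijection.

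That bijectivity is the real content. I would prove it by induction, peeling off one row at a time (equivalently, in the puzzle-completion order, one maximal strip of pieces at a time) and matching it with an elementary tableau operation -- a jeu-de-taquin-type slide or an insertion step -- so that injectivity and surjectivity both reduce to the invertibility of that elementary move. I expect the main obstacle to lie precisely here, at the combination-piece hypothesis of the modified rule, which is exactly where the original nonlocal formulation breaks down: one must show that the local combination-piece constraint is neither too permissive nor too restrictive relative to the defining conditions on $\mathcal{T}(\lambda,\mu,\nu)$, and in particular must correctly handle cascades of adjacent combination pieces -- the subtle K-theoretic overcounting phenomenon that the correction to Conjecture~\ref{conj:Knutson.Vakil} is designed to fix.
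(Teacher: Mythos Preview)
Your overall strategy---bijecting modified KV-puzzles with the tableaux of \cite{PeYo} and checking weight-preservation---is exactly what the paper does, but the implementation differs in two substantive ways. First, the paper does not scan $P$ top-to-bottom/left-to-right; instead it decomposes $P$ into $k$ non-intersecting \emph{tracks} $\pi_1,\ldots,\pi_k$, where $\pi_i$ runs from the $i$th $1$ on the $\nu$-side to the $i$th $1$ on the $\mu$-side. Each track is parsed as a word in a small alphabet of (combination) pieces, and this word is read off directly as the family-$i$ labels of the tableau. The inverse map is constructed explicitly from the tableau rows, with no jeu de taquin or insertion; bijectivity then follows because the two maps are manifestly mutual inverses once well-definedness is checked. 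Second, the paper does not biject to the tableaux of \cite{PeYo} directly: it first reformulates that rule (via a one-line inclusion--exclusion) into a ``starred'' variant in which certain box labels may carry a $\star$ contributing both a $(-1)$ and a factor $1-t_a/t_b$. This reformulation is essential, because the yellow KV-piece (always glued to a green piece) corresponds precisely to such a starred box label, not to a repeated genomic entry; your dichotomy ``green $\leftrightarrow$ edge label, KV-piece $\leftrightarrow$ repeat'' misses this case.

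A couple of your local readings are also off: not every grey rhombus produces a box---the $\nearrow$-oriented grey rhombi do, but the $\rightarrow$-oriented ones are ``skip a column'' instructions---and the two combination pieces for the purple KV-piece encode, respectively, ``repeat the last gene as an edge label'' and ``repeat the last gene as a box label in the next row.'' These distinctions are exactly what makes the track-by-track reading clean, and they would need to be disentangled in any row-by-row inductive approach as well.
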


We have a few remarks. First, the rule of Theorem~\ref{thm:main} is ``positive'' in the sense of D.~Anderson-S.~Griffeth-E.~Miller's
\cite{Anderson.Griffeth.Miller}; cf. the discussion in \cite[$\mathsection$1.4]{PeYo}.
Second, it is a natural objective 
to interpret Theorem~\ref{thm:main}
via geometric degeneration; see \cite{Coskun.Vakil, Knutson:positroid}. Third, the first author has found a tableau formulation similar to that of \cite{PeYo} to complement the puzzle rule of \cite{Knutson:positroid}
for the \emph{different} Schubert calculus problem in $K_{\sf T}({\sf X})$ of multiplying a class of a Schubert variety by that of an opposite Schubert variety; further discussion may appear elsewhere.

To prove Theorem~\ref{thm:main},  
we first give a variant of the main theorem
of \cite{PeYo}; see Section~\ref{sec:tableau_rule}. In Section~\ref{sec:proof}, we
then give a weight-preserving bijection between modified
KV-puzzles and the
objects of the rule of Section~\ref{sec:tableau_rule}.

\section{A tableau rule for $K_{\lambda,\mu}^{\nu}$}
\label{sec:tableau_rule}

We need to briefly recall the definitions of \cite[$\mathsection$~1.2--1.3]{PeYo}; there the Schubert varieties $X_{\lambda}$ are indexed by Young diagrams $\lambda$ contained
in a $k\times (n-k)$ rectangle. (Throughout, we orient Young diagrams and tableaux according to the English convention.)

An {\bf edge-labeled genomic tableau} is a filling of the boxes and horizontal edges of a skew diagram $\nu / \lambda$ with subscripted labels $i_j$, where $i$ is a positive integer and the $j$'s that appear for each $i$ form an initial interval of positive integers. Each box of $\nu / \lambda$ contains
one label, whereas the horizontal edges weakly between the southern border of $\lambda$ and the northern border of $\nu$ are filled by (possibly empty) sets of labels. A genomic edge-labeled tableau $T$ is {\bf  semistandard} if
\begin{enumerate}
\item[(S.1)] the box labels of each row strictly increase lexicographically from left to right;
\item[(S.2)] ignoring subscripts, each label is strictly less than any label strictly south in its column;
\item[(S.3)] ignoring subscripts, the labels appearing on a given edge are distinct;
\item[(S.4)] if $i_j$ appears strictly west of $i_k$, then $j \leq k$.
\end{enumerate}
Index the rows of $\nu$ from the top starting at $1$. We say a label $i_j$ is {\bf too high} if it appears weakly above the north edge of row $i$.
We refer to the collection of all $i_j$'s (for fixed $i,j$) as a {\bf gene}. 
The {\bf content} of $T$ is the composition $(\alpha_1, \alpha_2, \dots )$ where $\alpha_i$ is greatest so that $i_{\alpha_i}$ is a gene of $T$. 

Recall that in the classical tableau theory, a semistandard tableau $S$ is \emph{ballot} if, reading the labels down columns from right to left, we obtain a word $W$ with the following property: For each $i$, every initial segment of $W$ contains at least as many $i$'s as $(i+1)$'s. Given an edge-labeled genomic tableau $T$, choose one label from each gene and delete all others; now delete all subscripts. We say $T$ is {\bf ballot} if, regardless of our choices from genes, the resulting tableau (possibly containing holes) is necessarily ballot in the above classical sense. (In the case of multiple labels on a edge, read them from least to greatest.)

We now diverge slightly from the treatment of \cite{PeYo}, borrowing notation from \cite{Thomas.Yong:H_T}. Given a box $\x$ in an edge-labeled genomic tableau $T$, we say $\x$ is {\bf starrable} if it contains $i_j$, is in row $>i$, and $i_{j+1}$ is not a box label to its immediate right. 
Let ${\tt StarBallotGen}_\mu(\nu/\lambda)$ be the set of all ballot semistandard edge-labeled genomic tableaux of shape $\nu / \lambda$ and content $\mu$ with no label too high, where the label of each starrable box may freely be marked by $\star$ or not. The tableau $T$ illustrated in Figure~\ref{fig:thetab} is an element of ${\tt StarBallotGen}_{(10,5,3)}((15,8,5)/(12,2,1))$. There are three starrable boxes in $T$, in only one of which the label has been starred.

Let ${\sf Man}(\x)$ denote the length of any $\{\uparrow,\rightarrow\}$-lattice path from the southwest corner of $k \times (n-k)$ to the northwest corner of $\x$. For $\x$ in row $r$ containing $i_j^\star$, set ${\tt starfactor}(\x) := 1 - \frac{t_{{\sf Man}(\x) + 1}}{t_{r - i + \mu_i - j + 1 + {\sf Man}(\x)}}$. For an edge label $\ell = i_j$ in the southern edge of $\x$ in row $r$, set ${\tt edgefactor} := 1 - \frac{t_{{\sf Man}(\x)}}{t_{r - i + \mu_i - j + 1 + {\sf Man}(\x)}}$. 
Finally for $T\in {\tt StarBallotGen}_\mu(\nu/\lambda)$, define 
\[{\widehat {\rm wt}}(T):=(-1)^{{\hat d}(T)}\times \prod_\ell {\tt edgefactor}(\ell) \times
\prod_{\sf x} {\tt starfactor}(\x) ;\]
here the products are respectively over edge labels $\ell$ and boxes $\x$ containing starred labels, while
${\hat d}(T):=\text{$\#$(labels in $T$)} + \text{$\#$($\star$'s in $T$)} - |\mu|$.
Let 
\[{\hat L}_{\lambda,\mu}^{\nu}:=\sum_{T} {\widehat {\rm wt}}(T),\] 
where the sum is over
all $T\in {\tt StarBallotGen}_\mu(\nu/\lambda)$.

We need a reformulation of \cite[Theorem~1.3]{PeYo}; the proof is a simple application
of the ``inclusion-exclusion'' identity $\prod_{i\in [m]}a_i =\sum_{S\subseteq [m]}(-1)^{|S|}\prod_{i\in S}(1-a_i)$.

\begin{theorem}
\label{thm:reformulated}
$K_{\lambda,\mu}^{\nu}={\hat L}_{\lambda,\mu}^{\nu}.$ \qed
\end{theorem}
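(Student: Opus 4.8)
The plan is to import \cite[Theorem~1.3]{PeYo} and rewrite the weight it attaches to each tableau so that the elementary ``inclusion--exclusion'' identity
\[\prod_{i\in[m]}a_i=\sum_{S\subseteq[m]}(-1)^{|S|}\prod_{i\in S}(1-a_i)\]
turns that theorem's sum into exactly the sum defining ${\hat L}_{\lambda,\mu}^{\nu}$. First I would transcribe \cite[Theorem~1.3]{PeYo} into the notation of this section. It states that $K_{\lambda,\mu}^{\nu}$ equals a sum over all ballot semistandard edge-labeled genomic tableaux $T_0$ of shape $\nu/\lambda$ and content $\mu$ with no label too high---that is, over the underlying tableaux of ${\tt StarBallotGen}_\mu(\nu/\lambda)$ before any starring is chosen---where the weight of $T_0$ is, in the present normalization,
\[(-1)^{\#(\text{labels in }T_0)-|\mu|}\ \times\ \prod_\ell {\tt edgefactor}(\ell)\ \times\ \prod_{\x}\bigl(1-{\tt starfactor}(\x)\bigr),\]
the first product over the edge labels $\ell$ of $T_0$ and the last over the starrable boxes $\x$ of $T_0$; here $1-{\tt starfactor}(\x)=t_{{\sf Man}(\x)+1}/t_{r-i+\mu_i-j+1+{\sf Man}(\x)}$ is a monomial, for $\x$ in row $r$ containing $i_j$. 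The one point of this step that takes care is the dictionary: because we ``diverge slightly'' from \cite{PeYo} and adopt conventions of \cite{Thomas.Yong:H_T}, one must check that \cite{PeYo}'s semistandard/ballot/no-too-high hypotheses coincide with (S.1)--(S.4) together with the conditions recalled above, that \cite{PeYo}'s per-edge equivariant factor is exactly ${\tt edgefactor}(\ell)$, and that \cite{PeYo}'s per-box equivariant factor is the monomial $1-{\tt starfactor}(\x)$ at each starrable box $\x$ and is trivial at every other box.

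Granting this, I would apply the displayed identity with $[m]$ the set of starrable boxes of $T_0$ and $a_{\x}:=1-{\tt starfactor}(\x)$, giving
\[\prod_{\x}\bigl(1-{\tt starfactor}(\x)\bigr)=\sum_{S}(-1)^{|S|}\prod_{\x\in S}{\tt starfactor}(\x),\]
where $S$ runs over all subsets of the set of starrable boxes of $T_0$. Substituting and interchanging the two summations yields
\[K_{\lambda,\mu}^{\nu}=\sum_{T_0}\ \sum_{S}\ (-1)^{\#(\text{labels in }T_0)-|\mu|+|S|}\ \prod_\ell{\tt edgefactor}(\ell)\ \prod_{\x\in S}{\tt starfactor}(\x).\]
Finally I would observe that a pair $(T_0,S)$ is precisely the data of an element $T\in{\tt StarBallotGen}_\mu(\nu/\lambda)$---namely $T_0$ with exactly the boxes of $S$ marked by $\star$, which is legitimate since the label of each starrable box ``may freely be marked by $\star$ or not''---and that under this bijection $\#(\text{labels in }T_0)-|\mu|+|S|=\#(\text{labels in }T)+\#(\star\text{'s in }T)-|\mu|={\hat d}(T)$, while the remaining product is $\prod_\ell{\tt edgefactor}(\ell)\prod_{\x}{\tt starfactor}(\x)$ over the edge labels and starred boxes of $T$. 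Hence the double sum is $\sum_{T}{\widehat{\rm wt}}(T)={\hat L}_{\lambda,\mu}^{\nu}$, which is the claim.

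I expect the sole genuine obstacle to be the first step: matching \cite{PeYo}'s combinatorial framework and weight normalization to the present notation, so that \cite[Theorem~1.3]{PeYo} really does take the form displayed above. Once that is in hand, everything else is the mechanical identity (itself proved by a one-line induction on $m$), the bijection $(T_0,S)\leftrightarrow T$, and a short sign count, exactly as the remark preceding the statement anticipates.
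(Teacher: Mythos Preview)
Your proposal is correct and follows exactly the approach the paper takes: the paper's proof is the single remark that Theorem~\ref{thm:reformulated} is a reformulation of \cite[Theorem~1.3]{PeYo} obtained via the inclusion--exclusion identity $\prod_{i\in[m]}a_i=\sum_{S\subseteq[m]}(-1)^{|S|}\prod_{i\in S}(1-a_i)$, and you have spelled out precisely the details (the bijection $(T_0,S)\leftrightarrow T$ and the sign count) that this remark leaves implicit. Your caveat about the dictionary with \cite{PeYo} is well-placed and is indeed the only substantive point to verify.
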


\begin{example}
\label{exa:fourtableaux}
Let $k=2, n=5$ and $\lambda=(2,0)$, $\mu=(1,0)$ and $\nu=(3,1)$. The four tableaux contributing
to ${\hat L}_{\lambda,\mu}^{\nu}$ are 
\[\ \ \ \ 
\begin{picture}(120,63)
\ytableausetup{boxsize=1.7em}
\put(0,43){$\ytableaushort{ {*(lightgray)\blank} {*(lightgray)\blank} {1_1}, 
{1_1}}$}
\put(-10,0){${\widehat {\rm wt}}(T_2)=-1$}
\end{picture}
\begin{picture}(120,63)
\ytableausetup{boxsize=1.7em}
\put(0,43){$\ytableaushort{ {*(lightgray)\blank} {*(lightgray)\blank} {1_1}, 
{1_1}}$}
\put(26,39){$1_1$}
\put(-45,0){${\widehat {\rm wt}}(T_3)\!=\!(-1)^2(1-\frac{t_3}{t_4})$}
\end{picture}
\begin{picture}(120,63)
\ytableausetup{boxsize=1.7em}
\put(0,43){$\ytableaushort{ {*(lightgray)\blank} {*(lightgray)\blank} {1_1}, 
{1_1^\star}}$}
\put(-38,0){${\widehat {\rm wt}}(T_5)\!=\!(-1)^2(1-\frac{t_2}{t_3})$}
\end{picture}
\begin{picture}(120,63)
\ytableausetup{boxsize=1.7em}
\put(0,43){$\ytableaushort{ {*(lightgray)\blank} {*(lightgray)\blank} {1_1}, 
{1_1^\star}}$}
\put(26,39){$1_1$}
\put(-30,0){${\widehat {\rm wt}}(T_6)\!=\!(-1)^3(1\!-\!\frac{t_3}{t_4})(1\!-\!\frac{t_2}{t_3})$}
\end{picture}
\]
Our indexing of these tableaux alludes to the precise connection to the four puzzles $P_2,P_3,P_5$ and $P_6$ of Section~\ref{sec:puzzle_conjecture}, as explained in the next section.\qed
\end{example}

\section{Proof of Theorem~\ref{thm:main}: bijecting the tableau and puzzle rules}
\label{sec:proof}

\subsection{Description of the bijection}
To relate the modifed KV-puzzle rule of Theorem~\ref{thm:main} with the tableau rule of
Theorem~\ref{thm:reformulated}, we give a variant of T.~Tao's ``proof without words''
\cite{Vakil:annals} (and its modification by K.~Purbhoo \cite{purbhoo}) that bijects cohomological puzzles 
(using the first three pieces) and a tableau Littlewood-Richardson
rule. An extension of this proof for equivariant puzzles (i.e., fillings that additionally use the equivariant piece)
was given by V.~Kreiman \cite{Kreiman}; we also encorporate
elements of his bijection in our analysis. 

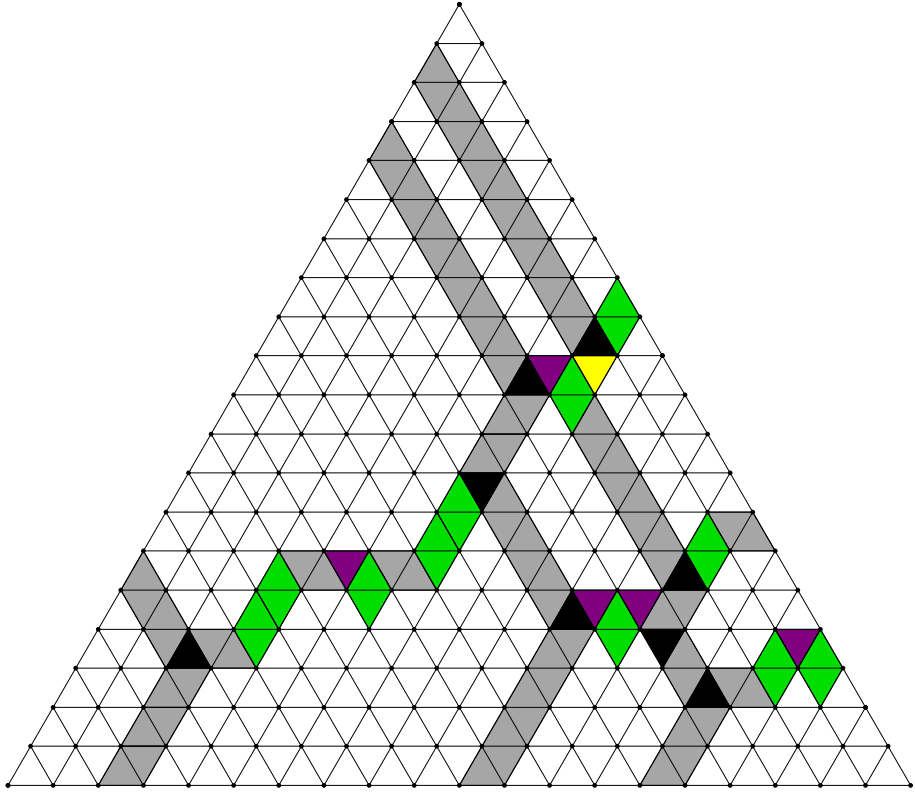
\begin{figure}[t]
\Scale[0.6]{
\begin{tikzpicture}[line cap=round,line join=round,>=triangle 45,x=1.0cm,y=1.0cm]
\clip(-1.58,-0.79) rectangle (20.9,17.84);
\fill[color=black,fill=eqeqeq,fill opacity=1.0] (2,0) -- (3,0) -- (3.5,0.87) -- (2.5,0.87) -- cycle;
\fill[color=black,fill=eqeqeq,fill opacity=1.0] (2.5,0.87) -- (3.5,0.87) -- (4,1.73) -- (3,1.73) -- cycle;
\fill[color=black,fill=eqeqeq,fill opacity=1.0] (3,1.73) -- (4,1.73) -- (4.5,2.6) -- (3.5,2.6) -- cycle;
\fill[fill=black,fill opacity=1.0] (3.5,2.6) -- (4.5,2.6) -- (4,3.46) -- cycle;
\fill[color=black,fill=eqeqeq,fill opacity=1.0] (4,3.46) -- (5,3.46) -- (5.5,2.6) -- (4.5,2.6) -- cycle;
\fill[color=black,fill=qqffqq,fill opacity=1.0] (5,3.46) -- (5.5,4.33) -- (6,3.46) -- (5.5,2.6) -- cycle;
\fill[color=black,fill=qqffqq,fill opacity=1.0] (5.5,4.33) -- (6,5.2) -- (6.5,4.33) -- (6,3.46) -- cycle;
\fill[color=black,fill=eqeqeq,fill opacity=1.0] (6,5.2) -- (7,5.2) -- (7.5,4.33) -- (6.5,4.33) -- cycle;
\fill[color=black,fill=ccwwff,fill opacity=1.0] (7,5.2) -- (8,5.2) -- (7.5,4.33) -- cycle;
\fill[color=black,fill=qqffqq,fill opacity=1.0] (8,5.2) -- (7.5,4.33) -- (8,3.46) -- (8.5,4.33) -- cycle;
\fill[color=black,fill=eqeqeq,fill opacity=1.0] (8,5.2) -- (9,5.2) -- (9.5,4.33) -- (8.5,4.33) -- cycle;
\fill[color=black,fill=qqffqq,fill opacity=1.0] (9.5,4.33) -- (10,5.2) -- (9.5,6.06) -- (9,5.2) -- cycle;
\fill[color=black,fill=qqffqq,fill opacity=1.0] (9.5,6.06) -- (10,6.93) -- (10.5,6.06) -- (10,5.2) -- cycle;
\fill[fill=black,fill opacity=1.0] (10,6.93) -- (11,6.93) -- (10.5,6.06) -- cycle;
\fill[color=black,fill=eqeqeq,fill opacity=1.0] (10,6.93) -- (11,6.93) -- (11.5,7.79) -- (10.5,7.79) -- cycle;
\fill[color=black,fill=eqeqeq,fill opacity=1.0] (10.5,7.79) -- (11.5,7.79) -- (12,8.66) -- (11,8.66) -- cycle;
\fill[fill=black,fill opacity=1.0] (11,8.66) -- (12,8.66) -- (11.5,9.53) -- cycle;
\fill[color=black,fill=ccwwff,fill opacity=1.0] (11.5,9.53) -- (12,8.66) -- (12.5,9.53) -- cycle;
\fill[fill=black,fill opacity=1.0] (12.5,9.53) -- (13.5,9.53) -- (13,10.39) -- cycle;
\fill[color=black,fill=qqffqq,fill opacity=1.0] (13.5,9.53) -- (14,10.39) -- (13.5,11.26) -- (13,10.39) -- cycle;
\fill[color=black,fill=eqeqeq,fill opacity=1.0] (3.5,2.6) -- (4,3.46) -- (3,5.2) -- (2.5,4.33) -- cycle;
\fill[color=black,fill=eqeqeq,fill opacity=1.0] (11,8.66) -- (11.5,9.53) -- (8.5,14.72) -- (8,13.86) -- cycle;
\fill[color=black,fill=eqeqeq,fill opacity=1.0] (12.5,9.53) -- (13,10.39) -- (9.5,16.45) -- (9,15.59) -- cycle;
\fill[color=black,fill=eqeqeq,fill opacity=1.0] (10.5,6.06) -- (11,6.93) -- (12.5,4.33) -- (12,3.46) -- cycle;
\fill[fill=black,fill opacity=1.0] (12,3.46) -- (13,3.46) -- (12.5,4.33) -- cycle;
\fill[color=black,fill=eqeqeq,fill opacity=1.0] (12,3.46) -- (10,0) -- (11,0) -- (13,3.46) -- cycle;
\fill[color=black,fill=ccwwff,fill opacity=1.0] (12.5,4.33) -- (13,3.46) -- (13.5,4.33) -- cycle;
\fill[color=black,fill=qqffqq,fill opacity=1.0] (13.5,4.33) -- (13,3.46) -- (13.5,2.6) -- (14,3.46) -- cycle;
\fill[color=black,fill=ccwwff,fill opacity=1.0] (13.5,4.33) -- (14.5,4.33) -- (14,3.46) -- cycle;
\fill[color=black,fill=black,fill opacity=1.0] (14,3.46) -- (14.5,2.6) -- (15,3.46) -- cycle;
\fill[color=black,fill=eqeqeq,fill opacity=1.0] (14.5,4.33) -- (15.5,4.33) -- (15,3.46) -- (14,3.46) -- cycle;
\fill[fill=black,fill opacity=1.0] (14.5,4.33) -- (15.5,4.33) -- (15,5.2) -- cycle;
\fill[color=black,fill=eqeqeq,fill opacity=1.0] (13,8.66) -- (12.5,7.79) -- (14.5,4.33) -- (15,5.2) -- cycle;
\fill[color=black,fill=qqffqq,fill opacity=1.0] (15.5,4.33) -- (15,5.2) -- (15.5,6.06) -- (16,5.2) -- cycle;
\fill[color=black,fill=eqeqeq,fill opacity=1.0] (15.5,6.06) -- (16.5,6.06) -- (17,5.2) -- (16,5.2) -- cycle;
\fill[fill=black,fill opacity=1.0] (15,1.73) -- (15.5,2.6) -- (16,1.73) -- cycle;
\fill[color=black,fill=eqeqeq,fill opacity=1.0] (14,0) -- (15,1.73) -- (16,1.73) -- (15,0) -- cycle;
\fill[color=black,fill=eqeqeq,fill opacity=1.0] (14.5,2.6) -- (15,3.46) -- (15.5,2.6) -- (15,1.73) -- cycle;
\fill[color=black,fill=eqeqeq,fill opacity=1.0] (15.5,2.6) -- (16.5,2.6) -- (17,1.73) -- (16,1.73) -- cycle;
\fill[color=black,fill=qqffqq,fill opacity=1.0] (16.5,2.6) -- (17,1.73) -- (17.5,2.6) -- (17,3.46) -- cycle;
\fill[color=black,fill=qqffqq,fill opacity=1.0] (18,3.46) -- (17.5,2.6) -- (18,1.73) -- (18.5,2.6) -- cycle;
\fill[color=black,fill=ccwwff,fill opacity=1.0] (17,3.46) -- (18,3.46) -- (17.5,2.6) -- cycle;
\fill[color=black,fill=qqffqq,fill opacity=1.0] (12.5,9.53) -- (12,8.66) -- (12.5,7.79) -- (13,8.66) -- cycle;
\fill[color=black,fill=zzttqq,fill opacity=1.0] (12.5,9.53) -- (13.5,9.53) -- (13,8.66) -- cycle;
\draw [color=black] (0,0)-- (20,0);
\draw [color=black] (20,0)-- (10,17.32);
\draw [color=black] (10,17.32)-- (0,0);
\draw [color=black] (9.5,16.45)-- (10.5,16.45);
\draw [color=black] (9,15.59)-- (11,15.59);
\draw [color=black] (8.5,14.72)-- (11.5,14.72);
\draw [color=black] (8,13.86)-- (12,13.86);
\draw [color=black] (7.5,12.99)-- (12.5,12.99);
\draw [color=black] (7,12.12)-- (13,12.12);
\draw [color=black] (6.5,11.26)-- (13.5,11.26);
\draw [color=black] (6,10.39)-- (14,10.39);
\draw [color=black] (5.5,9.53)-- (14.5,9.53);
\draw [color=black] (5,8.66)-- (15,8.66);
\draw [color=black] (4.5,7.79)-- (15.5,7.79);
\draw [color=black] (4,6.93)-- (16,6.93);
\draw [color=black] (3.5,6.06)-- (16.5,6.06);
\draw [color=black] (3,5.2)-- (17,5.2);
\draw [color=black] (2.5,4.33)-- (17.5,4.33);
\draw [color=black] (2,3.46)-- (18,3.46);
\draw [color=black] (1.5,2.6)-- (18.5,2.6);
\draw [color=black] (1,1.73)-- (19,1.73);
\draw [color=black] (19.5,0.87)-- (0.5,0.87);
\draw [color=black] (0.5,0.87)-- (1,0);
\draw [color=black] (2,0)-- (1,1.73);
\draw [color=black] (1.5,2.6)-- (3,0);
\draw [color=black] (4,0)-- (2,3.46);
\draw [color=black] (2.5,4.33)-- (5,0);
\draw [color=black] (6,0)-- (3,5.2);
\draw [color=black] (3.5,6.06)-- (7,0);
\draw [color=black] (8,0)-- (4,6.93);
\draw [color=black] (4.5,7.79)-- (9,0);
\draw [color=black] (10,0)-- (5,8.66);
\draw [color=black] (5.5,9.53)-- (11,0);
\draw [color=black] (12,0)-- (6,10.39);
\draw [color=black] (6.5,11.26)-- (13,0);
\draw [color=black] (14,0)-- (7,12.12);
\draw [color=black] (7.5,12.99)-- (15,0);
\draw [color=black] (16,0)-- (8,13.86);
\draw [color=black] (8.5,14.72)-- (17,0);
\draw [color=black] (18,0)-- (9,15.59);
\draw [color=black] (9.5,16.45)-- (19,0);
\draw [color=black] (19.5,0.87)-- (19,0);
\draw [color=black] (18,0)-- (19,1.73);
\draw [color=black] (18.5,2.6)-- (17,0);
\draw [color=black] (16,0)-- (18,3.46);
\draw [color=black] (17.5,4.33)-- (15,0);
\draw [color=black] (14,0)-- (17,5.2);
\draw [color=black] (16.5,6.06)-- (13,0);
\draw [color=black] (12,0)-- (16,6.93);
\draw [color=black] (15.5,7.79)-- (11,0);
\draw [color=black] (10,0)-- (15,8.66);
\draw [color=black] (14.5,9.53)-- (9,0);
\draw [color=black] (8,0)-- (14,10.39);
\draw [color=black] (13.5,11.26)-- (7,0);
\draw [color=black] (6,0)-- (13,12.12);
\draw [color=black] (12.5,12.99)-- (5,0);
\draw [color=black] (4,0)-- (12,13.86);
\draw [color=black] (11.5,14.72)-- (3,0);
\draw [color=black] (2,0)-- (11,15.59);
\draw [color=black] (10.5,16.45)-- (1,0);
\draw [color=black] (2,0)-- (3,0);
\draw [color=black] (3,0)-- (3.5,0.87);
\draw [color=black] (3.5,0.87)-- (2.5,0.87);
\draw [color=black] (2.5,0.87)-- (2,0);
\draw [color=black] (2.5,0.87)-- (3.5,0.87);
\draw [color=black] (3.5,0.87)-- (4,1.73);
\draw [color=black] (4,1.73)-- (3,1.73);
\draw [color=black] (3,1.73)-- (2.5,0.87);
\draw [color=black] (3,1.73)-- (4,1.73);
\draw [color=black] (4,1.73)-- (4.5,2.6);
\draw [color=black] (4.5,2.6)-- (3.5,2.6);
\draw [color=black] (3.5,2.6)-- (3,1.73);
\draw [color=black] (3.5,2.6)-- (4.5,2.6);
\draw [color=black] (4.5,2.6)-- (4,3.46);
\draw [color=black] (4,3.46)-- (3.5,2.6);
\draw [color=black] (4,3.46)-- (5,3.46);
\draw [color=black] (5,3.46)-- (5.5,2.6);
\draw [color=black] (5.5,2.6)-- (4.5,2.6);
\draw [color=black] (4.5,2.6)-- (4,3.46);
\draw [color=black] (5,3.46)-- (5.5,4.33);
\draw [color=black] (5.5,4.33)-- (6,3.46);
\draw [color=black] (6,3.46)-- (5.5,2.6);
\draw [color=black] (5.5,2.6)-- (5,3.46);
\draw [color=black] (5.5,4.33)-- (6,5.2);
\draw [color=black] (6,5.2)-- (6.5,4.33);
\draw [color=black] (6.5,4.33)-- (6,3.46);
\draw [color=black] (6,3.46)-- (5.5,4.33);
\draw [color=black] (6,5.2)-- (7,5.2);
\draw [color=black] (7,5.2)-- (7.5,4.33);
\draw [color=black] (7.5,4.33)-- (6.5,4.33);
\draw [color=black] (6.5,4.33)-- (6,5.2);
\draw [color=black] (7,5.2)-- (8,5.2);
\draw [color=black] (8,5.2)-- (7.5,4.33);
\draw [color=black] (7.5,4.33)-- (7,5.2);
\draw [color=black] (8,5.2)-- (7.5,4.33);
\draw [color=black] (7.5,4.33)-- (8,3.46);
\draw [color=black] (8,3.46)-- (8.5,4.33);
\draw [color=black] (8.5,4.33)-- (8,5.2);
\draw [color=black] (8,5.2)-- (9,5.2);
\draw [color=black] (9,5.2)-- (9.5,4.33);
\draw [color=black] (9.5,4.33)-- (8.5,4.33);
\draw [color=black] (8.5,4.33)-- (8,5.2);
\draw [color=black] (9.5,4.33)-- (10,5.2);
\draw [color=black] (10,5.2)-- (9.5,6.06);
\draw [color=black] (9.5,6.06)-- (9,5.2);
\draw [color=black] (9,5.2)-- (9.5,4.33);
\draw [color=black] (9.5,6.06)-- (10,6.93);
\draw [color=black] (10,6.93)-- (10.5,6.06);
\draw [color=black] (10.5,6.06)-- (10,5.2);
\draw [color=black] (10,5.2)-- (9.5,6.06);
\draw [color=black] (10,6.93)-- (11,6.93);
\draw [color=black] (11,6.93)-- (10.5,6.06);
\draw [color=black] (10.5,6.06)-- (10,6.93);
\draw [color=black] (10,6.93)-- (11,6.93);
\draw [color=black] (11,6.93)-- (11.5,7.79);
\draw [color=black] (11.5,7.79)-- (10.5,7.79);
\draw [color=black] (10.5,7.79)-- (10,6.93);
\draw [color=black] (10.5,7.79)-- (11.5,7.79);
\draw [color=black] (11.5,7.79)-- (12,8.66);
\draw [color=black] (12,8.66)-- (11,8.66);
\draw [color=black] (11,8.66)-- (10.5,7.79);
\draw [color=black] (11,8.66)-- (12,8.66);
\draw [color=black] (12,8.66)-- (11.5,9.53);
\draw [color=black] (11.5,9.53)-- (11,8.66);
\draw [color=black] (11.5,9.53)-- (12,8.66);
\draw [color=black] (12,8.66)-- (12.5,9.53);
\draw [color=black] (12.5,9.53)-- (11.5,9.53);
\draw [color=black] (12.5,9.53)-- (13.5,9.53);
\draw [color=black] (13.5,9.53)-- (13,10.39);
\draw [color=black] (13,10.39)-- (12.5,9.53);
\draw [color=black] (13.5,9.53)-- (14,10.39);
\draw [color=black] (14,10.39)-- (13.5,11.26);
\draw [color=black] (13.5,11.26)-- (13,10.39);
\draw [color=black] (13,10.39)-- (13.5,9.53);
\draw [color=black] (3.5,2.6)-- (4,3.46);
\draw [color=black] (4,3.46)-- (3,5.2);
\draw [color=black] (3,5.2)-- (2.5,4.33);
\draw [color=black] (2.5,4.33)-- (3.5,2.6);
\draw [color=black] (11,8.66)-- (11.5,9.53);
\draw [color=black] (11.5,9.53)-- (8.5,14.72);
\draw [color=black] (8.5,14.72)-- (8,13.86);
\draw [color=black] (8,13.86)-- (11,8.66);
\draw [color=black] (12.5,9.53)-- (13,10.39);
\draw [color=black] (13,10.39)-- (9.5,16.45);
\draw [color=black] (9.5,16.45)-- (9,15.59);
\draw [color=black] (9,15.59)-- (12.5,9.53);
\draw [color=black] (10.5,6.06)-- (11,6.93);
\draw [color=black] (11,6.93)-- (12.5,4.33);
\draw [color=black] (12.5,4.33)-- (12,3.46);
\draw [color=black] (12,3.46)-- (10.5,6.06);
\draw [color=black] (12,3.46)-- (13,3.46);
\draw [color=black] (13,3.46)-- (12.5,4.33);
\draw [color=black] (12.5,4.33)-- (12,3.46);
\draw [color=black] (12,3.46)-- (10,0);
\draw [color=black] (10,0)-- (11,0);
\draw [color=black] (11,0)-- (13,3.46);
\draw [color=black] (13,3.46)-- (12,3.46);
\draw [color=black] (12.5,4.33)-- (13,3.46);
\draw [color=black] (13,3.46)-- (13.5,4.33);
\draw [color=black] (13.5,4.33)-- (12.5,4.33);
\draw [color=black] (13.5,4.33)-- (13,3.46);
\draw [color=black] (13,3.46)-- (13.5,2.6);
\draw [color=black] (13.5,2.6)-- (14,3.46);
\draw [color=black] (14,3.46)-- (13.5,4.33);
\draw [color=black] (13.5,4.33)-- (14.5,4.33);
\draw [color=black] (14.5,4.33)-- (14,3.46);
\draw [color=black] (14,3.46)-- (13.5,4.33);
\draw [color=black] (14,3.46)-- (14.5,2.6);
\draw [color=black] (14.5,2.6)-- (15,3.46);
\draw [color=black] (15,3.46)-- (14,3.46);
\draw [color=black] (14.5,4.33)-- (15.5,4.33);
\draw [color=black] (15.5,4.33)-- (15,3.46);
\draw [color=black] (15,3.46)-- (14,3.46);
\draw [color=black] (14,3.46)-- (14.5,4.33);
\draw [color=black] (14.5,4.33)-- (15.5,4.33);
\draw [color=black] (15.5,4.33)-- (15,5.2);
\draw [color=black] (15,5.2)-- (14.5,4.33);
\draw [color=black] (13,8.66)-- (12.5,7.79);
\draw [color=black] (12.5,7.79)-- (14.5,4.33);
\draw [color=black] (14.5,4.33)-- (15,5.2);
\draw [color=black] (15,5.2)-- (13,8.66);
\draw [color=black] (15.5,4.33)-- (15,5.2);
\draw [color=black] (15,5.2)-- (15.5,6.06);
\draw [color=black] (15.5,6.06)-- (16,5.2);
\draw [color=black] (16,5.2)-- (15.5,4.33);
\draw [color=black] (15.5,6.06)-- (16.5,6.06);
\draw [color=black] (16.5,6.06)-- (17,5.2);
\draw [color=black] (17,5.2)-- (16,5.2);
\draw [color=black] (16,5.2)-- (15.5,6.06);
\draw [color=black] (15,1.73)-- (15.5,2.6);
\draw [color=black] (15.5,2.6)-- (16,1.73);
\draw [color=black] (16,1.73)-- (15,1.73);
\draw [color=black] (14,0)-- (15,1.73);
\draw [color=black] (15,1.73)-- (16,1.73);
\draw [color=black] (16,1.73)-- (15,0);
\draw [color=black] (15,0)-- (14,0);
\draw [color=black] (14.5,2.6)-- (15,3.46);
\draw [color=black] (15,3.46)-- (15.5,2.6);
\draw [color=black] (15.5,2.6)-- (15,1.73);
\draw [color=black] (15,1.73)-- (14.5,2.6);
\draw [color=black] (15.5,2.6)-- (16.5,2.6);
\draw [color=black] (16.5,2.6)-- (17,1.73);
\draw [color=black] (17,1.73)-- (16,1.73);
\draw [color=black] (16,1.73)-- (15.5,2.6);
\draw [color=black] (16.5,2.6)-- (17,1.73);
\draw [color=black] (17,1.73)-- (17.5,2.6);
\draw [color=black] (17.5,2.6)-- (17,3.46);
\draw [color=black] (17,3.46)-- (16.5,2.6);
\draw [color=black] (18,3.46)-- (17.5,2.6);
\draw [color=black] (17.5,2.6)-- (18,1.73);
\draw [color=black] (18,1.73)-- (18.5,2.6);
\draw [color=black] (18.5,2.6)-- (18,3.46);
\draw [color=black] (17,3.46)-- (18,3.46);
\draw [color=black] (18,3.46)-- (17.5,2.6);
\draw [color=black] (17.5,2.6)-- (17,3.46);
\draw [color=black] (12.5,9.53)-- (12,8.66);
\draw [color=black] (12,8.66)-- (12.5,7.79);
\draw [color=black] (12.5,7.79)-- (13,8.66);
\draw [color=black] (13,8.66)-- (12.5,9.53);
\draw [color=black] (12.5,9.53)-- (13.5,9.53);
\draw [color=black] (13.5,9.53)-- (13,8.66);
\draw [color=black] (13,8.66)-- (12.5,9.53);
\begin{scriptsize}
\fill [color=black] (0,0) circle (1.5pt);
\fill [color=black] (20,0) circle (1.5pt);
\fill [color=black] (1,0) circle (1.5pt);
\fill [color=black] (2,0) circle (1.5pt);
\fill [color=black] (3,0) circle (1.5pt);
\fill [color=black] (4,0) circle (1.5pt);
\fill [color=black] (5,0) circle (1.5pt);
\fill [color=black] (6,0) circle (1.5pt);
\fill [color=black] (7,0) circle (1.5pt);
\fill [color=black] (8,0) circle (1.5pt);
\fill [color=black] (9,0) circle (1.5pt);
\fill [color=black] (10,0) circle (1.5pt);
\fill [color=black] (11,0) circle (1.5pt);
\fill [color=black] (12,0) circle (1.5pt);
\fill [color=black] (13,0) circle (1.5pt);
\fill [color=black] (14,0) circle (1.5pt);
\fill [color=black] (15,0) circle (1.5pt);
\fill [color=black] (16,0) circle (1.5pt);
\fill [color=black] (17,0) circle (1.5pt);
\fill [color=black] (18,0) circle (1.5pt);
\fill [color=black] (19,0) circle (1.5pt);
\fill [color=black] (10,17.32) circle (1.5pt);
\fill [color=black] (0.5,0.87) circle (1.5pt);
\fill [color=black] (0,0) circle (1.5pt);
\fill [color=black] (10,17.32) circle (1.5pt);
\fill [color=black] (1,1.73) circle (1.5pt);
\fill [color=black] (1.5,2.6) circle (1.5pt);
\fill [color=black] (2,3.46) circle (1.5pt);
\fill [color=black] (2.5,4.33) circle (1.5pt);
\fill [color=black] (3,5.2) circle (1.5pt);
\fill [color=black] (3.5,6.06) circle (1.5pt);
\fill [color=black] (4,6.93) circle (1.5pt);
\fill [color=black] (4.5,7.79) circle (1.5pt);
\fill [color=black] (5,8.66) circle (1.5pt);
\fill [color=black] (5.5,9.53) circle (1.5pt);
\fill [color=black] (6,10.39) circle (1.5pt);
\fill [color=black] (6.5,11.26) circle (1.5pt);
\fill [color=black] (7,12.12) circle (1.5pt);
\fill [color=black] (7.5,12.99) circle (1.5pt);
\fill [color=black] (8,13.86) circle (1.5pt);
\fill [color=black] (8.5,14.72) circle (1.5pt);
\fill [color=black] (8.5,14.72) circle (1.5pt);
\fill [color=black] (9.5,16.45) circle (1.5pt);
\fill [color=black] (10,17.32) circle (1.5pt);
\fill [color=black] (9,15.59) circle (1.5pt);
\fill [color=black] (19.5,0.87) circle (1.5pt);
\fill [color=black] (19,1.73) circle (1.5pt);
\fill [color=black] (10.5,16.45) circle (1.5pt);
\fill [color=black] (11,15.59) circle (1.5pt);
\fill [color=black] (11.5,14.72) circle (1.5pt);
\fill [color=black] (12,13.86) circle (1.5pt);
\fill [color=black] (12.5,12.99) circle (1.5pt);
\fill [color=black] (13,12.12) circle (1.5pt);
\fill [color=black] (13.5,11.26) circle (1.5pt);
\fill [color=black] (14,10.39) circle (1.5pt);
\fill [color=black] (14.5,9.53) circle (1.5pt);
\fill [color=black] (14.5,9.53) circle (1.5pt);
\fill [color=black] (15,8.66) circle (1.5pt);
\fill [color=black] (15.5,7.79) circle (1.5pt);
\fill [color=black] (16,6.93) circle (1.5pt);
\fill [color=black] (16.5,6.06) circle (1.5pt);
\fill [color=black] (17,5.2) circle (1.5pt);
\fill [color=black] (17.5,4.33) circle (1.5pt);
\fill [color=black] (18,3.46) circle (1.5pt);
\fill [color=black] (19,1.73) circle (1.5pt);
\fill [color=black] (18.5,2.6) circle (1.5pt);
\fill [color=black] (3.5,0.87) circle (1.5pt);
\fill [color=black] (2.5,0.87) circle (1.5pt);
\fill [color=black] (4,1.73) circle (1.5pt);
\fill [color=black] (3,1.73) circle (1.5pt);
\fill [color=black] (4.5,2.6) circle (1.5pt);
\fill [color=black] (3.5,2.6) circle (1.5pt);
\fill [color=black] (4,3.46) circle (1.5pt);
\fill [color=black] (5,3.46) circle (1.5pt);
\fill [color=black] (5.5,2.6) circle (1.5pt);
\fill [color=black] (5.5,4.33) circle (1.5pt);
\fill [color=black] (6,3.46) circle (1.5pt);
\fill [color=black] (6,5.2) circle (1.5pt);
\fill [color=black] (6.5,4.33) circle (1.5pt);
\fill [color=black] (7,5.2) circle (1.5pt);
\fill [color=black] (7.5,4.33) circle (1.5pt);
\fill [color=black] (8,5.2) circle (1.5pt);
\fill [color=black] (8,3.46) circle (1.5pt);
\fill [color=black] (8.5,4.33) circle (1.5pt);
\fill [color=black] (9,5.2) circle (1.5pt);
\fill [color=black] (9.5,4.33) circle (1.5pt);
\fill [color=black] (10,5.2) circle (1.5pt);
\fill [color=black] (9.5,6.06) circle (1.5pt);
\fill [color=black] (10,6.93) circle (1.5pt);
\fill [color=black] (10.5,6.06) circle (1.5pt);
\fill [color=black] (11,6.93) circle (1.5pt);
\fill [color=black] (11.5,7.79) circle (1.5pt);
\fill [color=black] (10.5,7.79) circle (1.5pt);
\fill [color=black] (12,8.66) circle (1.5pt);
\fill [color=black] (11,8.66) circle (1.5pt);
\fill [color=black] (11.5,9.53) circle (1.5pt);
\fill [color=black] (12.5,9.53) circle (1.5pt);
\fill [color=black] (13.5,9.53) circle (1.5pt);
\fill [color=black] (13,8.66) circle (1.5pt);
\fill [color=black] (12.5,7.79) circle (1.5pt);
\fill [color=black] (13,10.39) circle (1.5pt);
\fill [color=black] (12.5,4.33) circle (1.5pt);
\fill [color=black] (12,3.46) circle (1.5pt);
\fill [color=black] (13,3.46) circle (1.5pt);
\fill [color=black] (12.5,2.6) circle (1.5pt);
\fill [color=black] (11.5,2.6) circle (1.5pt);
\fill [color=black] (12,1.73) circle (1.5pt);
\fill [color=black] (10.5,2.6) circle (1.5pt);
\fill [color=black] (11,1.73) circle (1.5pt);
\fill [color=black] (10,1.73) circle (1.5pt);
\fill [color=black] (10.5,0.87) circle (1.5pt);
\fill [color=black] (9.5,0.87) circle (1.5pt);
\fill [color=black] (9,1.73) circle (1.5pt);
\fill [color=black] (8,1.73) circle (1.5pt);
\fill [color=black] (8.5,0.87) circle (1.5pt);
\fill [color=black] (6,1.73) circle (1.5pt);
\fill [color=black] (6.5,0.87) circle (1.5pt);
\fill [color=black] (5.5,0.87) circle (1.5pt);
\fill [color=black] (13.5,4.33) circle (1.5pt);
\fill [color=black] (13.5,2.6) circle (1.5pt);
\fill [color=black] (14,3.46) circle (1.5pt);
\fill [color=black] (14.5,4.33) circle (1.5pt);
\fill [color=black] (14.5,2.6) circle (1.5pt);
\fill [color=black] (15,3.46) circle (1.5pt);
\fill [color=black] (15.5,4.33) circle (1.5pt);
\fill [color=black] (15,5.2) circle (1.5pt);
\fill [color=black] (15.5,6.06) circle (1.5pt);
\fill [color=black] (16,5.2) circle (1.5pt);
\fill [color=black] (15,1.73) circle (1.5pt);
\fill [color=black] (15.5,2.6) circle (1.5pt);
\fill [color=black] (16,1.73) circle (1.5pt);
\fill [color=black] (16.5,2.6) circle (1.5pt);
\fill [color=black] (17,1.73) circle (1.5pt);
\fill [color=black] (17.5,2.6) circle (1.5pt);
\fill [color=black] (17,3.46) circle (1.5pt);
\fill [color=black] (18,1.73) circle (1.5pt);
\fill [color=black] (1.5,0.87) circle (1.5pt);
\fill [color=black] (2,1.73) circle (1.5pt);
\fill [color=black] (2.5,2.6) circle (1.5pt);
\fill [color=black] (3,3.46) circle (1.5pt);
\fill [color=black] (3.5,4.33) circle (1.5pt);
\fill [color=black] (4,5.2) circle (1.5pt);
\fill [color=black] (4.5,6.06) circle (1.5pt);
\fill [color=black] (5,6.93) circle (1.5pt);
\fill [color=black] (5.5,7.79) circle (1.5pt);
\fill [color=black] (6,8.66) circle (1.5pt);
\fill [color=black] (6.5,9.53) circle (1.5pt);
\fill [color=black] (7,10.39) circle (1.5pt);
\fill [color=black] (7.5,11.26) circle (1.5pt);
\fill [color=black] (8,12.12) circle (1.5pt);
\fill [color=black] (8.5,12.99) circle (1.5pt);
\fill [color=black] (9,13.86) circle (1.5pt);
\fill [color=black] (9.5,14.72) circle (1.5pt);
\fill [color=black] (10,15.59) circle (1.5pt);
\fill [color=black] (10.5,14.72) circle (1.5pt);
\fill [color=black] (10,13.86) circle (1.5pt);
\fill [color=black] (9.5,12.99) circle (1.5pt);
\fill [color=black] (10.5,12.99) circle (1.5pt);
\fill [color=black] (11,13.86) circle (1.5pt);
\fill [color=black] (11.5,12.99) circle (1.5pt);
\fill [color=black] (12,12.12) circle (1.5pt);
\fill [color=black] (11,12.12) circle (1.5pt);
\fill [color=black] (10,12.12) circle (1.5pt);
\fill [color=black] (9,12.12) circle (1.5pt);
\fill [color=black] (8.5,11.26) circle (1.5pt);
\fill [color=black] (9.5,11.26) circle (1.5pt);
\fill [color=black] (10.5,11.26) circle (1.5pt);
\fill [color=black] (11.5,11.26) circle (1.5pt);
\fill [color=black] (12.5,11.26) circle (1.5pt);
\fill [color=black] (12,10.39) circle (1.5pt);
\fill [color=black] (11,10.39) circle (1.5pt);
\fill [color=black] (10,10.39) circle (1.5pt);
\fill [color=black] (9,10.39) circle (1.5pt);
\fill [color=black] (8,10.39) circle (1.5pt);
\fill [color=black] (7.5,9.53) circle (1.5pt);
\fill [color=black] (8.5,9.53) circle (1.5pt);
\fill [color=black] (9.5,9.53) circle (1.5pt);
\fill [color=black] (10.5,9.53) circle (1.5pt);
\fill [color=black] (14,8.66) circle (1.5pt);
\fill [color=black] (10,8.66) circle (1.5pt);
\fill [color=black] (9,8.66) circle (1.5pt);
\fill [color=black] (8,8.66) circle (1.5pt);
\fill [color=black] (7,8.66) circle (1.5pt);
\fill [color=black] (6.5,7.79) circle (1.5pt);
\fill [color=black] (7.5,7.79) circle (1.5pt);
\fill [color=black] (8.5,7.79) circle (1.5pt);
\fill [color=black] (9.5,7.79) circle (1.5pt);
\fill [color=black] (9,6.93) circle (1.5pt);
\fill [color=black] (8,6.93) circle (1.5pt);
\fill [color=black] (7,6.93) circle (1.5pt);
\fill [color=black] (6,6.93) circle (1.5pt);
\fill [color=black] (5.5,6.06) circle (1.5pt);
\fill [color=black] (6.5,6.06) circle (1.5pt);
\fill [color=black] (7.5,6.06) circle (1.5pt);
\fill [color=black] (8.5,6.06) circle (1.5pt);
\fill [color=black] (5,5.2) circle (1.5pt);
\fill [color=black] (4.5,4.33) circle (1.5pt);
\fill [color=black] (13.5,7.79) circle (1.5pt);
\fill [color=black] (14.5,7.79) circle (1.5pt);
\fill [color=black] (15,6.93) circle (1.5pt);
\fill [color=black] (14,6.93) circle (1.5pt);
\fill [color=black] (13,6.93) circle (1.5pt);
\fill [color=black] (12,6.93) circle (1.5pt);
\fill [color=black] (11.5,6.06) circle (1.5pt);
\fill [color=black] (12.5,6.06) circle (1.5pt);
\fill [color=black] (13.5,6.06) circle (1.5pt);
\fill [color=black] (14.5,6.06) circle (1.5pt);
\fill [color=black] (14,5.2) circle (1.5pt);
\fill [color=black] (13,5.2) circle (1.5pt);
\fill [color=black] (12,5.2) circle (1.5pt);
\fill [color=black] (11,5.2) circle (1.5pt);
\fill [color=black] (11.5,4.33) circle (1.5pt);
\fill [color=black] (10.5,4.33) circle (1.5pt);
\fill [color=black] (9,3.46) circle (1.5pt);
\fill [color=black] (7,3.46) circle (1.5pt);
\fill [color=black] (5,1.73) circle (1.5pt);
\fill [color=black] (4.5,0.87) circle (1.5pt);
\fill [color=black] (6.5,2.6) circle (1.5pt);
\fill [color=black] (7.5,2.6) circle (1.5pt);
\fill [color=black] (7,1.73) circle (1.5pt);
\fill [color=black] (7.5,0.87) circle (1.5pt);
\fill [color=black] (8.5,2.6) circle (1.5pt);
\fill [color=black] (9.5,2.6) circle (1.5pt);
\fill [color=black] (10,3.46) circle (1.5pt);
\fill [color=black] (11,3.46) circle (1.5pt);
\fill [color=black] (11.5,0.87) circle (1.5pt);
\fill [color=black] (12.5,0.87) circle (1.5pt);
\fill [color=black] (13,1.73) circle (1.5pt);
\fill [color=black] (14,1.73) circle (1.5pt);
\fill [color=black] (13.5,0.87) circle (1.5pt);
\fill [color=black] (14.5,0.87) circle (1.5pt);
\fill [color=black] (15.5,0.87) circle (1.5pt);
\fill [color=black] (16.5,0.87) circle (1.5pt);
\fill [color=black] (17.5,0.87) circle (1.5pt);
\fill [color=black] (18.5,0.87) circle (1.5pt);
\fill [color=black] (16,3.46) circle (1.5pt);
\fill [color=black] (16.5,4.33) circle (1.5pt);
\end{scriptsize}
\end{tikzpicture}}
\caption{A ``generic'' modified KV-puzzle $P$ ($k=3,n=20$).}
\label{fig:bigex}
\end{figure}

Figure~\ref{fig:bigex} gives a ``generic'' example of a (modified) KV-puzzle $P$. We will define a {\bf track} 
$\pi_i$ from the $i$th
$1$ (from the left) on the $\nu$-boundary of $\Delta_{\lambda,\mu,\nu}$ to the $i$th $1$ (from the top) on the $\mu$-boundary. 
To do this, we describe the {\bf flow} through the (oriented, non-KV) puzzle pieces that use a $1$ and four {\bf combination pieces} (possible ways one can use
the KV-pieces under the rules for a modified KV-puzzle): 
\begin{itemize}
\item[(A.1)]
\begin{picture}(10,10)
\put(-5,-1){
\Scale[0.3]{\begin{tikzpicture}[line cap=round,line join=round,>=triangle
45,x=1.0cm,y=1.0cm]
\clip(-0.21,-0.18) rectangle (1.72,1.04);
\fill[color=cccccc,fill=cccccc,fill opacity=1.0] (0,0) -- (0.5,0.87) --
(1.5,0.87) -- (1,0) -- cycle;
\end{tikzpicture}}}
\end{picture}
\ : go northeast
\item[(A.2)]  
\begin{picture}(10,10)
\put(-5,-2){
\Scale[0.3]{
\begin{tikzpicture}[line cap=round,line join=round,>=triangle
45,x=1.0cm,y=1.0cm]
\clip(0.67,-0.37) rectangle (2.27,1.04);
\fill[color=black,fill=black,fill opacity=1.0] (1.5,0.87) -- (1,0) --
(2,0) -- cycle;
\end{tikzpicture}}}
\end{picture}
\ : go north then northeast
\item[(A.3)] 
\begin{picture}(10,10)
\put(-5,-2){
\Scale[0.3]{
\begin{tikzpicture}[line cap=round,line join=round,>=triangle
45,x=1.0cm,y=1.0cm]
\clip(1.25,-0.23) rectangle (3.29,1.04);
\fill[color=cccccc,fill=cccccc,fill opacity=1.0] (1.5,0.87) -- (2.5,0.87) --
(3,0) -- (2,0) -- cycle;
\end{tikzpicture}}}
\end{picture}
\ : go left to right
\item[(A.4)] 
\begin{picture}(10,10)
\put(-5,-6){
\Scale[0.3]{\begin{tikzpicture}[line cap=round,line join=round,>=triangle 45,x=1.0cm,y=1.0cm]
\clip(1.12,-0.4) rectangle (2.76,2.1);
\fill[color=qqffqq,fill=qqffqq,fill opacity=1.0] (2,1.73) -- (1.5,0.87) -- (2,0) -- (2.5,0.87) -- cycle;
\draw [color=qqffqq] (2,1.73)-- (1.5,0.87);
\draw [color=qqffqq] (1.5,0.87)-- (2,0);
\draw [color=qqffqq] (2,0)-- (2.5,0.87);
\draw [color=qqffqq] (2.5,0.87)-- (2,1.73);
\end{tikzpicture}}}
\end{picture}
\ : go northeast
\item[(A.5)] 
\begin{picture}(10,10)
\put(-5,-6){
\Scale[0.3]{\begin{tikzpicture}[line cap=round,line join=round,>=triangle
45,x=1.0cm,y=1.0cm]
\clip(-0.3,-3.89) rectangle (1.84,-1.8);
\fill[color=zzqqzz,fill=zzqqzz,fill opacity=1.0] (0,-2) -- (1,-2) --
(0.5,-2.87) -- cycle;
\fill[color=qqffqq,fill=qqffqq,fill opacity=1.0] (1,-2) -- (0.5,-2.87) --
(1,-3.73) -- (1.5,-2.87) -- cycle;
\end{tikzpicture}}}
\end{picture}
\ : go in through the north $\backslash$ of the purple triangle, come out northeast from the purple gash into the southwest $\backslash$ of the 
green rhombus and pass northeast through this rhombus
\item[(A.6)] 
\begin{picture}(10,10)
\put(-5,-2){
\Scale[0.3]{\begin{tikzpicture}[line cap=round,line join=round,>=triangle
45,x=1.0cm,y=1.0cm]
\clip(0.72,0.58) rectangle (2.19,1.92);
\fill[color=black,fill=black,fill opacity=1.0] (1,1.73) -- (2,1.73) --
(1.5,0.87) -- cycle;
\end{tikzpicture}}}
\end{picture}
\ : come in through the left side and out the top
\item[(A.7)] 
\begin{picture}(10,10)
\put(-5,-6){
\Scale[0.3]{
\begin{tikzpicture}[line cap=round,line join=round,>=triangle
45,x=1.0cm,y=1.0cm]
\clip(0.2,-3.98) rectangle (2.24,-1.63);
\fill[color=qqffqq,fill=qqffqq,fill opacity=1.0] (1,-2) -- (0.5,-2.87) --
(1,-3.73) -- (1.5,-2.87) -- cycle;
\fill[color=zzttqq,fill=zzttqq,fill opacity=1.0] (1,-2) -- (1.5,-2.87) --
(2,-2) -- cycle;
\draw [color=qqffqq] (1,-2)-- (0.5,-2.87);
\draw [color=qqffqq] (0.5,-2.87)-- (1,-3.73);
\draw [color=qqffqq] (1,-3.73)-- (1.5,-2.87);
\draw [color=qqffqq] (1.5,-2.87)-- (1,-2);
\draw [color=zzttqq] (1,-2)-- (1.5,-2.87);
\draw [color=zzttqq] (1.5,-2.87)-- (2,-2);
\draw [color=zzttqq] (2,-2)-- (1,-2);
\end{tikzpicture}}}
\end{picture}
\ : come in through the southwest side of the green rhombus and out the top of the yellow triangle
\item[(A.8)]
\begin{picture}(10,12)
\put(-5,-6){ 
\Scale[0.3]{\begin{tikzpicture}[line cap=round,line join=round,>=triangle
45,x=1.0cm,y=1.0cm]
\clip(-0.23,-3.93) rectangle (2.25,-1.86);
\fill[color=zzqqzz,fill=zzqqzz,fill opacity=1.0] (0,-2) -- (1,-2) --
(0.5,-2.87) -- cycle;
\fill[color=cccccc,fill=cccccc,fill opacity=1.0] (1,-2) -- (0.5,-2.87) --
(1.5,-2.87) -- (2,-2) -- cycle;
\fill[fill=black,fill opacity=1.0] (0.5,-2.87) -- (1.5,-2.87) -- (1,-3.73)
-- cycle;
\end{tikzpicture}}}
\end{picture}
\ : come in through the north $\backslash$ of the purple triangle, out the gash into the 
$\backslash$ of the
\begin{picture}(10,10)
\put(-5,-2){
\Scale[0.3]{\begin{tikzpicture}[line cap=round,line join=round,>=triangle
45,x=1.0cm,y=1.0cm]
\clip(0.72,0.58) rectangle (2.19,1.92);
\fill[color=black,fill=black,fill opacity=1.0] (1,1.73) -- (2,1.73) --
(1.5,0.87) -- cycle;
\end{tikzpicture}}}
\end{picture}, out the --- of 
\begin{picture}(10,10)
\put(-5,-2){
\Scale[0.3]{\begin{tikzpicture}[line cap=round,line join=round,>=triangle
45,x=1.0cm,y=1.0cm]
\clip(0.72,0.58) rectangle (2.19,1.92);
\fill[color=black,fill=black,fill opacity=1.0] (1,1.73) -- (2,1.73) --
(1.5,0.87) -- cycle;
\end{tikzpicture}}}
\end{picture}
into the bottom of the grey rhombus and out its top
\item[(A.9)] 
\begin{picture}(10,12)
\put(-5,-6){
\Scale[0.3]{
\begin{tikzpicture}[line cap=round,line join=round,>=triangle
45,x=1.0cm,y=1.0cm]
\clip(-0.28,-3.92) rectangle (2.26,-1.9);
\fill[color=zzqqzz,fill=zzqqzz,fill opacity=1.0] (0,-2) -- (1,-2) --
(0.5,-2.87) -- cycle;
\fill[color=qqffqq,fill=qqffqq,fill opacity=1.0] (1,-2) -- (0.5,-2.87) --
(1,-3.73) -- (1.5,-2.87) -- cycle;
\fill[color=zzttqq,fill=zzttqq,fill opacity=1.0] (1,-2) -- (1.5,-2.87) --
(2,-2) -- cycle;
\end{tikzpicture}}}
\end{picture}
\ : come into the north $\backslash$ of the purple triangle, out the gash into the southwest $\backslash$ of the green rhombus
and out the northeast $\backslash$ into the left side of the yellow triangle and then go out the 
--- of that triangle.
\end{itemize}

  
Thinking of the (combination) pieces in (A.1)--(A.9) as letters of an alphabet, we can encode the northmost track in $P$ (from Figure~\ref{fig:bigex}) as the word 
\[
\begin{picture}(10,10)
\put(-5,-1){
\Scale[0.3]{\begin{tikzpicture}[line cap=round,line join=round,>=triangle
45,x=1.0cm,y=1.0cm]
\clip(-0.21,-0.18) rectangle (1.72,1.04);
\fill[color=cccccc,fill=cccccc,fill opacity=1.0] (0,0) -- (0.5,0.87) --
(1.5,0.87) -- (1,0) -- cycle;
\end{tikzpicture}}}
\end{picture}^{\ 3} \ 
\begin{picture}(10,10)
\put(-5,-2){
\Scale[0.3]{
\begin{tikzpicture}[line cap=round,line join=round,>=triangle
45,x=1.0cm,y=1.0cm]
\clip(0.67,-0.37) rectangle (2.27,1.04);
\fill[color=black,fill=black,fill opacity=1.0] (1.5,0.87) -- (1,0) --
(2,0) -- cycle;
\end{tikzpicture}}}
\end{picture}
 \ 
\begin{picture}(10,10)
\put(-5,-2){
\Scale[0.3]{
\begin{tikzpicture}[line cap=round,line join=round,>=triangle
45,x=1.0cm,y=1.0cm]
\clip(1.25,-0.23) rectangle (3.29,1.04);
\fill[color=cccccc,fill=cccccc,fill opacity=1.0] (1.5,0.87) -- (2.5,0.87) --
(3,0) -- (2,0) -- cycle;
\end{tikzpicture}}}
\end{picture}
 \ \begin{picture}(10,10)
\put(-5,-6){
\Scale[0.3]{\begin{tikzpicture}[line cap=round,line join=round,>=triangle 45,x=1.0cm,y=1.0cm]
\clip(1.12,-0.4) rectangle (2.76,2.1);
\fill[color=qqffqq,fill=qqffqq,fill opacity=1.0] (2,1.73) -- (1.5,0.87) -- (2,0) -- (2.5,0.87) -- cycle;
\draw [color=qqffqq] (2,1.73)-- (1.5,0.87);
\draw [color=qqffqq] (1.5,0.87)-- (2,0);
\draw [color=qqffqq] (2,0)-- (2.5,0.87);
\draw [color=qqffqq] (2.5,0.87)-- (2,1.73);
\end{tikzpicture}}}
\end{picture}^{\ 2} \ 
\begin{picture}(10,10)
\put(-5,-2){
\Scale[0.3]{
\begin{tikzpicture}[line cap=round,line join=round,>=triangle
45,x=1.0cm,y=1.0cm]
\clip(1.25,-0.23) rectangle (3.29,1.04);
\fill[color=cccccc,fill=cccccc,fill opacity=1.0] (1.5,0.87) -- (2.5,0.87) --
(3,0) -- (2,0) -- cycle;
\end{tikzpicture}}}
\end{picture}
\ 
\begin{picture}(10,10)
\put(-5,-6){
\Scale[0.3]{\begin{tikzpicture}[line cap=round,line join=round,>=triangle
45,x=1.0cm,y=1.0cm]
\clip(-0.3,-3.89) rectangle (1.84,-1.8);
\fill[color=zzqqzz,fill=zzqqzz,fill opacity=1.0] (0,-2) -- (1,-2) --
(0.5,-2.87) -- cycle;
\fill[color=qqffqq,fill=qqffqq,fill opacity=1.0] (1,-2) -- (0.5,-2.87) --
(1,-3.73) -- (1.5,-2.87) -- cycle;
\end{tikzpicture}}}
\end{picture}
 \ 
\begin{picture}(10,10)
\put(-5,-2){
\Scale[0.3]{
\begin{tikzpicture}[line cap=round,line join=round,>=triangle
45,x=1.0cm,y=1.0cm]
\clip(1.25,-0.23) rectangle (3.29,1.04);
\fill[color=cccccc,fill=cccccc,fill opacity=1.0] (1.5,0.87) -- (2.5,0.87) --
(3,0) -- (2,0) -- cycle;
\end{tikzpicture}}}
\end{picture}
\ 
\begin{picture}(10,10)
\put(-5,-6){
\Scale[0.3]{\begin{tikzpicture}[line cap=round,line join=round,>=triangle 45,x=1.0cm,y=1.0cm]
\clip(1.12,-0.4) rectangle (2.76,2.1);
\fill[color=qqffqq,fill=qqffqq,fill opacity=1.0] (2,1.73) -- (1.5,0.87) -- (2,0) -- (2.5,0.87) -- cycle;
\draw [color=qqffqq] (2,1.73)-- (1.5,0.87);
\draw [color=qqffqq] (1.5,0.87)-- (2,0);
\draw [color=qqffqq] (2,0)-- (2.5,0.87);
\draw [color=qqffqq] (2.5,0.87)-- (2,1.73);
\end{tikzpicture}}}
\end{picture}^{\ 2} \ 
\begin{picture}(10,10)
\put(-5,-2){
\Scale[0.3]{\begin{tikzpicture}[line cap=round,line join=round,>=triangle
45,x=1.0cm,y=1.0cm]
\clip(0.72,0.58) rectangle (2.19,1.92);
\fill[color=black,fill=black,fill opacity=1.0] (1,1.73) -- (2,1.73) --
(1.5,0.87) -- cycle;
\end{tikzpicture}}}
\end{picture}
 \ \begin{picture}(10,10)
\put(-5,-1){
\Scale[0.3]{\begin{tikzpicture}[line cap=round,line join=round,>=triangle
45,x=1.0cm,y=1.0cm]
\clip(-0.21,-0.18) rectangle (1.72,1.04);
\fill[color=cccccc,fill=cccccc,fill opacity=1.0] (0,0) -- (0.5,0.87) --
(1.5,0.87) -- (1,0) -- cycle;
\end{tikzpicture}}}
\end{picture}^{\ 2}  
\ \begin{picture}(10,10)
\put(-5,-2){
\Scale[0.3]{
\begin{tikzpicture}[line cap=round,line join=round,>=triangle
45,x=1.0cm,y=1.0cm]
\clip(0.67,-0.37) rectangle (2.27,1.04);
\fill[color=black,fill=black,fill opacity=1.0] (1.5,0.87) -- (1,0) --
(2,0) -- cycle;
\end{tikzpicture}}}
\end{picture} \ 
\begin{picture}(10,12)
\put(-5,-6){
\Scale[0.3]{
\begin{tikzpicture}[line cap=round,line join=round,>=triangle
45,x=1.0cm,y=1.0cm]
\clip(-0.28,-3.92) rectangle (2.26,-1.9);
\fill[color=zzqqzz,fill=zzqqzz,fill opacity=1.0] (0,-2) -- (1,-2) --
(0.5,-2.87) -- cycle;
\fill[color=qqffqq,fill=qqffqq,fill opacity=1.0] (1,-2) -- (0.5,-2.87) --
(1,-3.73) -- (1.5,-2.87) -- cycle;
\fill[color=zzttqq,fill=zzttqq,fill opacity=1.0] (1,-2) -- (1.5,-2.87) --
(2,-2) -- cycle;
\end{tikzpicture}}}
\end{picture}
 \ \ \begin{picture}(10,10)
\put(-5,-2){
\Scale[0.3]{
\begin{tikzpicture}[line cap=round,line join=round,>=triangle
45,x=1.0cm,y=1.0cm]
\clip(0.67,-0.37) rectangle (2.27,1.04);
\fill[color=black,fill=black,fill opacity=1.0] (1.5,0.87) -- (1,0) --
(2,0) -- cycle;
\end{tikzpicture}}}
\end{picture}
\ 
\begin{picture}(10,10)
\put(-5,-6){
\Scale[0.3]{\begin{tikzpicture}[line cap=round,line join=round,>=triangle 45,x=1.0cm,y=1.0cm]
\clip(1.12,-0.4) rectangle (2.76,2.1);
\fill[color=qqffqq,fill=qqffqq,fill opacity=1.0] (2,1.73) -- (1.5,0.87) -- (2,0) -- (2.5,0.87) -- cycle;
\draw [color=qqffqq] (2,1.73)-- (1.5,0.87);
\draw [color=qqffqq] (1.5,0.87)-- (2,0);
\draw [color=qqffqq] (2,0)-- (2.5,0.87);
\draw [color=qqffqq] (2.5,0.87)-- (2,1.73);
\end{tikzpicture}}}
\end{picture}.\]

Recall, if $\kappa$ is a letter/word in some alphabet, then
the {\bf Kleene star} is
$\kappa^*:=\{\emptyset, \kappa, \kappa\kappa,\ldots\}$.

\begin{proposition}[Decomposition of $\pi_i$]
\label{claim:canonical}
The list of (combination) pieces that appear in $\pi_i$, as read from southwest to northeast, 
is a word from the following formal grammar: 
\begin{equation}
\label{eqn:canonicalform}
\text{\tt boxes}[\text{\tt edges} \ \text{\tt startrow} \ \text{\tt boxes}]^* \ \text{\tt edges}
\end{equation}
where 
\begin{align*}
\text{\tt boxes}:= & \begin{picture}(10,10)
\put(-5,-1){
\Scale[0.3]{\begin{tikzpicture}[line cap=round,line join=round,>=triangle
45,x=1.0cm,y=1.0cm]
\clip(-0.21,-0.18) rectangle (1.72,1.04);
\fill[color=cccccc,fill=cccccc,fill opacity=1.0] (0,0) -- (0.5,0.87) --
(1.5,0.87) -- (1,0) -- cycle;
\end{tikzpicture}}}
\end{picture}^{\ *} \ 
\begin{picture}(10,10)
\put(-5,-2){
\Scale[0.3]{
\begin{tikzpicture}[line cap=round,line join=round,>=triangle
45,x=1.0cm,y=1.0cm]
\clip(0.67,-0.37) rectangle (2.27,1.04);
\fill[color=black,fill=black,fill opacity=1.0] (1.5,0.87) -- (1,0) --
(2,0) -- cycle;
\end{tikzpicture}}}
\end{picture}
\\
\text{\tt edges}:= & [\begin{picture}(10,10)
\put(-5,-2){
\Scale[0.3]{
\begin{tikzpicture}[line cap=round,line join=round,>=triangle
45,x=1.0cm,y=1.0cm]
\clip(1.25,-0.23) rectangle (3.29,1.04);
\fill[color=cccccc,fill=cccccc,fill opacity=1.0] (1.5,0.87) -- (2.5,0.87) --
(3,0) -- (2,0) -- cycle;
\end{tikzpicture}}}
\end{picture}^{\ *} \begin{picture}(10,10)
\put(-5,-6){
\Scale[0.3]{\begin{tikzpicture}[line cap=round,line join=round,>=triangle 45,x=1.0cm,y=1.0cm]
\clip(1.12,-0.4) rectangle (2.76,2.1);
\fill[color=qqffqq,fill=qqffqq,fill opacity=1.0] (2,1.73) -- (1.5,0.87) -- (2,0) -- (2.5,0.87) -- cycle;
\draw [color=qqffqq] (2,1.73)-- (1.5,0.87);
\draw [color=qqffqq] (1.5,0.87)-- (2,0);
\draw [color=qqffqq] (2,0)-- (2.5,0.87);
\draw [color=qqffqq] (2.5,0.87)-- (2,1.73);
\end{tikzpicture}}}
\end{picture}^* 
\begin{picture}(10,10)
\put(-5,-2){
\Scale[0.3]{
\begin{tikzpicture}[line cap=round,line join=round,>=triangle
45,x=1.0cm,y=1.0cm]
\clip(1.25,-0.23) rectangle (3.29,1.04);
\fill[color=cccccc,fill=cccccc,fill opacity=1.0] (1.5,0.87) -- (2.5,0.87) --
(3,0) -- (2,0) -- cycle;
\end{tikzpicture}}}
\end{picture}
^{\ *} \
\begin{picture}(10,10)
\put(-5,-6){
\Scale[0.3]{\begin{tikzpicture}[line cap=round,line join=round,>=triangle
45,x=1.0cm,y=1.0cm]
\clip(-0.3,-3.89) rectangle (1.84,-1.8);
\fill[color=zzqqzz,fill=zzqqzz,fill opacity=1.0] (0,-2) -- (1,-2) --
(0.5,-2.87) -- cycle;
\fill[color=qqffqq,fill=qqffqq,fill opacity=1.0] (1,-2) -- (0.5,-2.87) --
(1,-3.73) -- (1.5,-2.87) -- cycle;
\end{tikzpicture}}}
\end{picture}^{\ *}]^*\\
\text{\tt startrow}:= & 
\begin{picture}(10,10)
\put(-5,-2){
\Scale[0.3]{\begin{tikzpicture}[line cap=round,line join=round,>=triangle
45,x=1.0cm,y=1.0cm]
\clip(0.72,0.58) rectangle (2.19,1.92);
\fill[color=black,fill=black,fill opacity=1.0] (1,1.73) -- (2,1.73) --
(1.5,0.87) -- cycle;
\end{tikzpicture}}}
\end{picture}
 \cup 
\begin{picture}(10,10)
\put(-5,-6){
\Scale[0.3]{
\begin{tikzpicture}[line cap=round,line join=round,>=triangle
45,x=1.0cm,y=1.0cm]
\clip(0.2,-3.98) rectangle (2.24,-1.63);
\fill[color=qqffqq,fill=qqffqq,fill opacity=1.0] (1,-2) -- (0.5,-2.87) --
(1,-3.73) -- (1.5,-2.87) -- cycle;
\fill[color=zzttqq,fill=zzttqq,fill opacity=1.0] (1,-2) -- (1.5,-2.87) --
(2,-2) -- cycle;
\draw [color=qqffqq] (1,-2)-- (0.5,-2.87);
\draw [color=qqffqq] (0.5,-2.87)-- (1,-3.73);
\draw [color=qqffqq] (1,-3.73)-- (1.5,-2.87);
\draw [color=qqffqq] (1.5,-2.87)-- (1,-2);
\draw [color=zzttqq] (1,-2)-- (1.5,-2.87);
\draw [color=zzttqq] (1.5,-2.87)-- (2,-2);
\draw [color=zzttqq] (2,-2)-- (1,-2);
\end{tikzpicture}}}
\end{picture}
\ \cup
\begin{picture}(10,12)
\put(-5,-6){ 
\Scale[0.3]{\begin{tikzpicture}[line cap=round,line join=round,>=triangle
45,x=1.0cm,y=1.0cm]
\clip(-0.23,-3.93) rectangle (2.25,-1.86);
\fill[color=zzqqzz,fill=zzqqzz,fill opacity=1.0] (0,-2) -- (1,-2) --
(0.5,-2.87) -- cycle;
\fill[color=cccccc,fill=cccccc,fill opacity=1.0] (1,-2) -- (0.5,-2.87) --
(1.5,-2.87) -- (2,-2) -- cycle;
\fill[fill=black,fill opacity=1.0] (0.5,-2.87) -- (1.5,-2.87) -- (1,-3.73)
-- cycle;
\end{tikzpicture}}}
\end{picture}
\ \cup \begin{picture}(10,12)
\put(-5,-6){
\Scale[0.3]{
\begin{tikzpicture}[line cap=round,line join=round,>=triangle
45,x=1.0cm,y=1.0cm]
\clip(-0.28,-3.92) rectangle (2.26,-1.9);
\fill[color=zzqqzz,fill=zzqqzz,fill opacity=1.0] (0,-2) -- (1,-2) --
(0.5,-2.87) -- cycle;
\fill[color=qqffqq,fill=qqffqq,fill opacity=1.0] (1,-2) -- (0.5,-2.87) --
(1,-3.73) -- (1.5,-2.87) -- cycle;
\fill[color=zzttqq,fill=zzttqq,fill opacity=1.0] (1,-2) -- (1.5,-2.87) --
(2,-2) -- cycle;
\end{tikzpicture}}}
\end{picture}
\end{align*}
\end{proposition}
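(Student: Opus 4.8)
The plan is to carry out a finite local analysis of how the $1$-labelled edges of a modified KV-puzzle thread through its pieces, to package this as a finite-state (follow-relation) description of a single track, and then to check that this description coincides with the grammar \eqref{eqn:canonicalform}.

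\emph{Step 1: the track is well defined.} First I would check that inside each of the six puzzle pieces the $1$-flow is non-branching: a $1$ on one boundary edge of a piece, together with the edge-matching rule, forces a unique second boundary edge of that piece to carry a $1$. Hence the $1$-edges of a puzzle $P$ partition into disjoint trails. Tracing the trail through $\Delta_{\lambda,\mu,\nu}$ that issues from the $i$th $1$ of the $\nu$-side, I would then verify, piece by piece, that every local passage is one of (A.1)--(A.9) and that no other local configuration can occur; for the grey KV-rhombus, the purple triangle, and the yellow triangle this uses the combination constraints in the definition of a modified KV-puzzle together with the rule that the yellow piece requires an equivariant piece to its left. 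These constraints are exactly what guarantee that whenever the trail meets a KV-piece it lies inside one of the combinations (A.5), (A.7), (A.8), (A.9), so the trail continues uniquely and cannot stop in the interior of $\Delta$; it therefore ends at a $1$ of the $\mu$-side. This defines $\pi_i$, and the list of pieces along it is a well-defined word in the nine-letter alphabet (A.1)--(A.9).

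\emph{Step 2: the follow relation.} For each of (A.1)--(A.9) I would determine whether it can begin a track (border the $\nu$-side with the trail entering from below), the exact set of pieces that may immediately follow it on a track, and whether it can end a track. Each entry is forced: a given piece emits the outgoing $1$ along a determined edge, of determined slope and position; the piece on the far side of that edge must carry a $1$ there; and cross-referencing the filling rules with the combination constraints leaves only the listed options. The outcome I expect, to be displayed as a small transition table, is: a track begins with (A.1) or (A.2); (A.1) is followed only by (A.1) or (A.2); each of (A.2), (A.3), (A.4), (A.5) is followed by one of (A.3), (A.4), (A.5), or by one of (A.6), (A.7), (A.8), (A.9), or else the track ends; and each of (A.6), (A.7), (A.8), (A.9) is followed by (A.1) or (A.2). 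The substance of the proof is precisely this case analysis, and especially the exclusions in it --- for example that (A.1) is never immediately followed by a green-rhombus move (A.4), and that a \texttt{startrow} piece is never immediately followed by an \texttt{edges} piece. These exclusions rest on the label constraints surrounding the equivariant and KV-pieces, and it is here that passing to the \emph{modified} rule is essential: the original nonlocal condition permits local configurations that would spoil the decomposition \eqref{eqn:canonicalform}.

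\emph{Step 3: matching the grammar.} Given the follow relation, the words spelled by tracks form exactly the regular language of the corresponding automaton, and it only remains to identify this language with \eqref{eqn:canonicalform}. This is formal. The bracketed expression defining \texttt{edges} is, after simplification, the set of all words on (A.3), (A.4), (A.5), so \texttt{edges} is an arbitrary (possibly empty) run of those letters; \texttt{boxes} is an arbitrary Kleene star of (A.1)'s followed by a single (A.2); \texttt{startrow} is a single letter among (A.6), (A.7), (A.8), (A.9); and the pattern $\mathtt{boxes}[\mathtt{edges}\ \mathtt{startrow}\ \mathtt{boxes}]^*\ \mathtt{edges}$ encodes precisely the transitions of Step 2. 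In particular, the leading \texttt{boxes} being forced nonempty matches that a track begins with (A.1) or (A.2), while ending with \texttt{edges}, possibly empty, matches that a track cannot stop immediately after a \texttt{startrow} piece.
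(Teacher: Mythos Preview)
Your proposal is correct and is essentially a spelled-out version of the paper's own argument, which reads in its entirety ``By inspection of the rules for modified KV-puzzles.'' Your Steps~1--3 are exactly what that inspection amounts to: verifying the local flow through each (combination) piece, tabulating the resulting follow relation on the alphabet (A.1)--(A.9), and recognising the induced regular language as the grammar \eqref{eqn:canonicalform}; the simplification $[(\mathrm{A.3})^*(\mathrm{A.4})^*(\mathrm{A.3})^*(\mathrm{A.5})^*]^* = \{(\mathrm{A.3}),(\mathrm{A.4}),(\mathrm{A.5})\}^*$ is correct, and the transition table you describe matches the pattern $\mathtt{boxes}\,[\mathtt{edges}\ \mathtt{startrow}\ \mathtt{boxes}]^*\,\mathtt{edges}$.
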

\begin{proof}
By inspection of the rules for modified KV-puzzles.
\end{proof}

The remaining filling of the puzzle is forced, which we explain in two steps. First there is the {\bf NWray} of each 
\begin{picture}(10,10)
\put(-5,-2){
\Scale[0.3]{
\begin{tikzpicture}[line cap=round,line join=round,>=triangle
45,x=1.0cm,y=1.0cm]
\clip(0.67,-0.37) rectangle (2.27,1.04);
\fill[color=black,fill=black,fill opacity=1.0] (1.5,0.87) -- (1,0) --
(2,0) -- cycle;
\end{tikzpicture}}}
\end{picture}, i.e., the (possibly empty) path of upward pointing grey rhombi 
\begin{picture}(10,10)
\put(-5,-6){
\Scale[0.3]{\begin{tikzpicture}[line cap=round,line join=round,>=triangle 45,x=1.0cm,y=1.0cm]
\clip(1.12,-0.4) rectangle (2.76,2.1);
\fill[color=cccccc,fill=cccccc,fill opacity=1.0] (2,1.73) -- (1.5,0.87) -- (2,0) -- (2.5,0.87) -- cycle;
\draw [color=cccccc] (2,1.73)-- (1.5,0.87);
\draw [color=cccccc] (1.5,0.87)-- (2,0);
\draw [color=cccccc] (2,0)-- (2.5,0.87);
\draw [color=cccccc] (2.5,0.87)-- (2,1.73);
\end{tikzpicture}}}
\end{picture}
growing 
from the $/$ of this 
\begin{picture}(10,10)
\put(-5,-2){
\Scale[0.3]{
\begin{tikzpicture}[line cap=round,line join=round,>=triangle
45,x=1.0cm,y=1.0cm]
\clip(0.67,-0.37) rectangle (2.27,1.04);
\fill[color=black,fill=black,fill opacity=1.0] (1.5,0.87) -- (1,0) --
(2,0) -- cycle;
\end{tikzpicture}}}
\end{picture}.

\begin{lemma}
\label{lemma:NWray}
The NWray of
\begin{picture}(10,10)
\put(-5,-2){
\Scale[0.3]{
\begin{tikzpicture}[line cap=round,line join=round,>=triangle
45,x=1.0cm,y=1.0cm]
\clip(0.67,-0.37) rectangle (2.27,1.04);
\fill[color=black,fill=black,fill opacity=1.0] (1.5,0.87) -- (1,0) --
(2,0) -- cycle;
\end{tikzpicture}}}
\end{picture}
ends either at the $\lambda$-boundary of $\Delta$ or with a piece from {\tt startrow}. In the latter
case, the shared edge is the south-then-eastmost edge of the
(combination) piece.
\end{lemma}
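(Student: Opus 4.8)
The plan is to view the NWray as a \emph{maximal} chain of ``upward pointing'' (tall) grey rhombi, each glued to the next along a ``$/$''-oriented edge and rooted at the ``$/$''-edge of the given black triangle, and then to classify what can cap such a chain. The first observation is that every gluing ``$/$''-edge occurring in the chain is labeled $1$: the chain is rooted at the ``$/$''-edge of the black triangle, which, being the all-$1$'s piece, has that edge labeled $1$; and the (rotated) grey rhombus piece has the property that if one of its ``$/$''-edges is labeled $1$, then so is the opposite ``$/$''-edge. In particular the horizontal grey rhombus and the equivariant (green) rhombus, whose ``$/$''-edges carry the label $0$, cannot occur in the chain, so the chain really is a straight run of tall grey rhombi advancing northwest along one diagonal of $\Delta$.

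Next, since $P$ has finitely many pieces the chain must terminate; let $e$ be its terminal (most northwesterly) ``$/$''-edge, which is labeled $1$ by the previous step. Either $e$ lies on $\partial\Delta$, or $e$ is interior and so is shared with a further piece $Q$. If $e\subseteq\partial\Delta$, then since the only ``$/$''-oriented side of $\Delta$ is the left ($\lambda$-)side, the chain ends on the $\lambda$-boundary; this is the first case of Lemma~\ref{lemma:NWray}, and it includes the degenerate case of an empty chain, with $e$ the ``$/$''-edge of the black triangle itself. If instead $e$ is interior, then $Q$ lies to the northwest of $e$, so $e$ is a ``$/$''-edge on the southeast of $Q$'s body; and $Q$ is not a tall grey rhombus, by maximality of the chain. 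It then remains to identify $Q$: running through the finitely many allowed (combination) pieces (A.1)--(A.9) of Proposition~\ref{claim:canonical}, one checks that a piece other than a grey rhombus can meet such an edge $e$ only if it is a {\tt startrow} piece, and that whenever a {\tt startrow} piece meets $e$ the edge $e$ is its south-then-eastmost edge.

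I expect the last step --- the finite inspection of (A.1)--(A.9) --- to be the main obstacle: one must carefully track edge directions and labels through the rotated and glued pieces, both to rule out every non-{\tt startrow} piece as a possible cap (for instance, to see that neither ``$/$''-edge of the equivariant rhombus, nor the outward ``$/$''-edge of the yellow triangle appearing in the combination pieces, is labeled $1$) and to verify in the surviving cases that the abutting edge is indeed the south-then-eastmost one. The remaining ingredients are routine: the label propagation is immediate from the piece definitions, and the absence of a ``mid-air'' termination is automatic, since a filling covers all of $\Delta$, so every interior edge borders two pieces.
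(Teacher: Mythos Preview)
Your argument is essentially the paper's own proof expanded: both reduce to observing that the terminal $/$-edge is labeled $1$ and then inspecting which pieces can sit northwest of such an edge. The paper compresses this to a single sentence (``by inspection''), while you spell out the label propagation and the finiteness of the chain, which is fine.

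One framing point deserves care. You phrase the final case analysis as ``running through (A.1)--(A.9) of Proposition~\ref{claim:canonical}.'' But (A.1)--(A.9) are introduced there as the pieces \emph{occurring along a track}; at the moment Lemma~\ref{lemma:NWray} is being proved, you do not yet know that the capping piece $Q$ lies on some track---that structural decomposition of the puzzle into tracks, NWrays, and white triangles is exactly what the lemma is helping to establish. The correct inspection (and what the paper means) is over \emph{all} modified KV-puzzle pieces in all admissible orientations, not merely the track pieces. In practice this changes nothing: the only pieces missing from your list are the white $0$-triangles (whose edges are all $0$, so cannot meet $e$) and the tall grey rhombus itself (which you already excluded by maximality). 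So the conclusion stands, but you should state the inspection over all puzzle pieces rather than invoking Proposition~\ref{claim:canonical}, to avoid circularity.
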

\begin{proof}
The north $/$ of \begin{picture}(10,10)
\put(-5,-6){
\Scale[0.3]{\begin{tikzpicture}[line cap=round,line join=round,>=triangle 45,x=1.0cm,y=1.0cm]
\clip(1.12,-0.4) rectangle (2.76,2.1);
\fill[color=cccccc,fill=cccccc,fill opacity=1.0] (2,1.73) -- (1.5,0.87) -- (2,0) -- (2.5,0.87) -- cycle;
\draw [color=cccccc] (2,1.73)-- (1.5,0.87);
\draw [color=cccccc] (1.5,0.87)-- (2,0);
\draw [color=cccccc] (2,0)-- (2.5,0.87);
\draw [color=cccccc] (2.5,0.87)-- (2,1.73);
\end{tikzpicture}}}
\end{picture} is labeled $1$. 
By inspection, the only (combination) pieces that can
connect to this edge are \begin{picture}(10,10)
\put(-5,-6){
\Scale[0.3]{\begin{tikzpicture}[line cap=round,line join=round,>=triangle 45,x=1.0cm,y=1.0cm]
\clip(1.12,-0.4) rectangle (2.76,2.1);
\fill[color=cccccc,fill=cccccc,fill opacity=1.0] (2,1.73) -- (1.5,0.87) -- (2,0) -- (2.5,0.87) -- cycle;
\draw [color=cccccc] (2,1.73)-- (1.5,0.87);
\draw [color=cccccc] (1.5,0.87)-- (2,0);
\draw [color=cccccc] (2,0)-- (2.5,0.87);
\draw [color=cccccc] (2.5,0.87)-- (2,1.73);
\end{tikzpicture}}}
\end{picture} and those from 
{\tt startrow} (at the stated shared edge).
\end{proof}
\noindent
Second, pieces of the puzzle not in a track or NWray are $0$-triangles (depicted white).

We correspond Young diagrams to $\{0,1\}$-sequences. Trace the $\{\leftarrow,\downarrow\}$-lattice path defined by the southern boundary
of $\lambda$ (as placed in the northwest corner of $k\times (n-k)$)
starting from the northeast corner of $k\times (n-k)$ towards the southeast corner of $k\times (n-k)$. Record each $\leftarrow$ step with ``$0$'' and each $\downarrow$ step with ``$1$''. 

We now convert $P$ into (we claim) an edge-labeled starred genomic tableau $T:=\phi(P)$ 
of shape $\nu/\lambda$ with content $\mu$.
The placement of the labels of family $i$ is governed by the 
decomposition (\ref{eqn:canonicalform})
of $\pi_i$. The initial sequence of $k$ 
\begin{picture}(10,10)
\put(-5,-1){
\Scale[0.3]{\begin{tikzpicture}[line cap=round,line join=round,>=triangle
45,x=1.0cm,y=1.0cm]
\clip(-0.21,-0.18) rectangle (1.72,1.04);
\fill[color=cccccc,fill=cccccc,fill opacity=1.0] (0,0) -- (0.5,0.87) --
(1.5,0.87) -- (1,0) -- cycle;
\end{tikzpicture}}}
\end{picture}\ 's
indicates the leftmost possible placement of box labels $i_{\mu_i},i_{\mu_i-1},\ldots,i_{\mu_i-k+1}$ (from right to left) in row $i$ of $T$. Continuing to read the sequence, one interprets:
\begin{itemize}
\item[(B.1)]
\begin{picture}(10,10)
\put(-5,-1){
\Scale[0.3]{\begin{tikzpicture}[line cap=round,line join=round,>=triangle
45,x=1.0cm,y=1.0cm]
\clip(-0.21,-0.18) rectangle (1.72,1.04);
\fill[color=cccccc,fill=cccccc,fill opacity=1.0] (0,0) -- (0.5,0.87) --
(1.5,0.87) -- (1,0) -- cycle;
\end{tikzpicture}}}
\end{picture}
\ $\leftrightarrow$ ``place (unstarred) box label
of next smaller gene''
\item[(B.2)]  
\begin{picture}(10,10)
\put(-5,-2){
\Scale[0.3]{
\begin{tikzpicture}[line cap=round,line join=round,>=triangle
45,x=1.0cm,y=1.0cm]
\clip(0.67,-0.37) rectangle (2.27,1.04);
\fill[color=black,fill=black,fill opacity=1.0] (1.5,0.87) -- (1,0) --
(2,0) -- cycle;
\end{tikzpicture}}}
\end{picture}
$\leftrightarrow$``end placing box labels in current row''
\item[(B.3)] 
\begin{picture}(10,10)
\put(-5,-2){
\Scale[0.3]{
\begin{tikzpicture}[line cap=round,line join=round,>=triangle
45,x=1.0cm,y=1.0cm]
\clip(1.25,-0.23) rectangle (3.29,1.04);
\fill[color=cccccc,fill=cccccc,fill opacity=1.0] (1.5,0.87) -- (2.5,0.87) --
(3,0) -- (2,0) -- cycle;
\end{tikzpicture}}}
\end{picture}
\ $\leftrightarrow$``skip to the next column left''
\item[(B.4)] 
\begin{picture}(10,10)
\put(-5,-6){
\Scale[0.3]{\begin{tikzpicture}[line cap=round,line join=round,>=triangle 45,x=1.0cm,y=1.0cm]
\clip(1.12,-0.4) rectangle (2.76,2.1);
\fill[color=qqffqq,fill=qqffqq,fill opacity=1.0] (2,1.73) -- (1.5,0.87) -- (2,0) -- (2.5,0.87) -- cycle;
\draw [color=qqffqq] (2,1.73)-- (1.5,0.87);
\draw [color=qqffqq] (1.5,0.87)-- (2,0);
\draw [color=qqffqq] (2,0)-- (2.5,0.87);
\draw [color=qqffqq] (2.5,0.87)-- (2,1.73);
\end{tikzpicture}}}
\end{picture}
$\leftrightarrow$``place lower edge label of the next smaller
gene''
\item[(B.5)] 
\begin{picture}(10,10)
\put(-5,-6){
\Scale[0.3]{\begin{tikzpicture}[line cap=round,line join=round,>=triangle
45,x=1.0cm,y=1.0cm]
\clip(-0.3,-3.89) rectangle (1.84,-1.8);
\fill[color=zzqqzz,fill=zzqqzz,fill opacity=1.0] (0,-2) -- (1,-2) --
(0.5,-2.87) -- cycle;
\fill[color=qqffqq,fill=qqffqq,fill opacity=1.0] (1,-2) -- (0.5,-2.87) --
(1,-3.73) -- (1.5,-2.87) -- cycle;
\end{tikzpicture}}}
\end{picture}
\ $\leftrightarrow$``place lower edge label
of the same gene last used''
\item[(B.6)] 
\begin{picture}(10,10)
\put(-5,-2){
\Scale[0.3]{\begin{tikzpicture}[line cap=round,line join=round,>=triangle
45,x=1.0cm,y=1.0cm]
\clip(0.72,0.58) rectangle (2.19,1.92);
\fill[color=black,fill=black,fill opacity=1.0] (1,1.73) -- (2,1.73) --
(1.5,0.87) -- cycle;
\end{tikzpicture}}}
\end{picture}
\ $\leftrightarrow$``go to next row''
\item[(B.7)] 
\begin{picture}(10,10)
\put(-5,-6){
\Scale[0.3]{
\begin{tikzpicture}[line cap=round,line join=round,>=triangle
45,x=1.0cm,y=1.0cm]
\clip(0.2,-3.98) rectangle (2.24,-1.63);
\fill[color=qqffqq,fill=qqffqq,fill opacity=1.0] (1,-2) -- (0.5,-2.87) --
(1,-3.73) -- (1.5,-2.87) -- cycle;
\fill[color=zzttqq,fill=zzttqq,fill opacity=1.0] (1,-2) -- (1.5,-2.87) --
(2,-2) -- cycle;
\draw [color=qqffqq] (1,-2)-- (0.5,-2.87);
\draw [color=qqffqq] (0.5,-2.87)-- (1,-3.73);
\draw [color=qqffqq] (1,-3.73)-- (1.5,-2.87);
\draw [color=qqffqq] (1.5,-2.87)-- (1,-2);
\draw [color=zzttqq] (1,-2)-- (1.5,-2.87);
\draw [color=zzttqq] (1.5,-2.87)-- (2,-2);
\draw [color=zzttqq] (2,-2)-- (1,-2);
\end{tikzpicture}}}
\end{picture}
\ $\leftrightarrow$``go to next row and place
$\star$-ed box label of the next smaller gene''
\item[(B.8)]
\begin{picture}(10,12)
\put(-5,-6){ 
\Scale[0.3]{\begin{tikzpicture}[line cap=round,line join=round,>=triangle
45,x=1.0cm,y=1.0cm]
\clip(-0.23,-3.93) rectangle (2.25,-1.86);
\fill[color=zzqqzz,fill=zzqqzz,fill opacity=1.0] (0,-2) -- (1,-2) --
(0.5,-2.87) -- cycle;
\fill[color=cccccc,fill=cccccc,fill opacity=1.0] (1,-2) -- (0.5,-2.87) --
(1.5,-2.87) -- (2,-2) -- cycle;
\fill[fill=black,fill opacity=1.0] (0.5,-2.87) -- (1.5,-2.87) -- (1,-3.73)
-- cycle;
\end{tikzpicture}}}
\end{picture}
\ $\leftrightarrow$``go to next row and place
(unstarred) box label of the same gene last used''
\item[(B.9)] 
\begin{picture}(10,12)
\put(-5,-6){
\Scale[0.3]{
\begin{tikzpicture}[line cap=round,line join=round,>=triangle
45,x=1.0cm,y=1.0cm]
\clip(-0.28,-3.92) rectangle (2.26,-1.9);
\fill[color=zzqqzz,fill=zzqqzz,fill opacity=1.0] (0,-2) -- (1,-2) --
(0.5,-2.87) -- cycle;
\fill[color=qqffqq,fill=qqffqq,fill opacity=1.0] (1,-2) -- (0.5,-2.87) --
(1,-3.73) -- (1.5,-2.87) -- cycle;
\fill[color=zzttqq,fill=zzttqq,fill opacity=1.0] (1,-2) -- (1.5,-2.87) --
(2,-2) -- cycle;
\end{tikzpicture}}}
\end{picture}
\ $\leftrightarrow$``go to next row and 
place $\star$-ed box label of the same gene last used''.
\end{itemize}

Applying $\phi$ to the puzzle $P$ of Figure~\ref{fig:bigex} gives the tableau $T$ of Figure~\ref{fig:thetab}. Here, $\lambda=
0^5 1 0^{10}1010$, corresponding to the inner shape $(12,2,1)$ (which is shaded in grey). Since $\mu=0^7 1 0^5 1 0^2 1 0^3$, the content
of $T$ is $(10,5,3)$. Finally, since $\nu=0^2 1 0^7 1 0^3 1 0^5$, the outer shape of
$T$ is $(15,8,5)$. As another example, $\phi$ connects the puzzles $P_2,P_3,P_5$ and $P_6$ 
of Section~\ref{sec:intro} respectively with the tableaux $T_2,T_3,T_5$ and $T_6$
of Example~\ref{exa:fourtableaux}.

\begin{figure}[h]
\begin{picture}(340,70)
\ytableausetup{boxsize=2.0em}
\put(-30,48){$\ytableaushort{ {*(lightgray)\blank} {*(lightgray)\blank} {*(lightgray)\blank} {*(lightgray)\blank} 
{*(lightgray)\blank} {*(lightgray)\blank}  {*(lightgray)\blank}  {*(lightgray)\blank} {*(lightgray)\blank} {*(lightgray)\blank} {*(lightgray)\blank} {*(lightgray)\blank} 
{1_8} {1_9} {1_{10}}, 
{*(lightgray)\blank} {*(lightgray)\blank} {1_2} {1_3} {2_2} {2_3} {2_4} {2_5},
{*(lightgray)\blank} {1_2^\star} {2_2} {3_2} {3_3}}$}
\put(222,43){$1_7$}
\put(198,43){$1_6$}
\put(148,43){$1_6$}
\put(100,43){$1_5$}
\put(76,43){$1_4$}
\put(51,19){$2_2$}
\put(-3,-5){$2_1 3_1$}
\put(-27,-5){$1_1 3_1$}
\end{picture}
\caption{The tableau $T := \phi(P)$ corresponding to the modified KV-puzzle $P$ of Figure~\ref{fig:bigex}.}
\label{fig:thetab}
\end{figure}

Conversely, given $T\in {\tt StarBallotGen}_\mu(\nu/\lambda)$, construct a word $\sigma_i$ using the
correspondences (B.1)--(B.9), for $1\leq i\leq k$. That is, read the occurrences (possibly zero) of family $i$ in $T$
from right to left and from the $i$th row down. 
(Note about (B.6) in the degenerate case
that there are no labels of family $i$ in the next row:
use \begin{picture}(10,10)
\put(-5,-2){
\Scale[0.3]{
\begin{tikzpicture}[line cap=round,line join=round,>=triangle
45,x=1.0cm,y=1.0cm]
\clip(0.67,-0.37) rectangle (2.27,1.04);
\fill[color=black,fill=black,fill opacity=1.0] (1.5,0.87) -- (1,0) --
(2,0) -- cycle;
\end{tikzpicture}}}
\end{picture}
after reading the leftmost column in $\nu/\lambda$ that does not have any labels of family $<i$.)

\begin{lemma}
\label{lemma:sigmadecomp}
Each $\sigma_i$  is of the form
(\ref{eqn:canonicalform}).
\end{lemma}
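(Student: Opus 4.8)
The plan is to run the dictionary (B.1)--(B.9) in the direction opposite to Proposition~\ref{claim:canonical}: rather than threading a track through a puzzle we scan the family-$i$ labels of $T$, and we must verify that the word produced lies in the grammar (\ref{eqn:canonicalform}). Fix $i$. If $\mu_i=0$ then $\sigma_i$ is empty and every Kleene star in (\ref{eqn:canonicalform}) may be instantiated trivially, so assume $\mu_i\ge 1$.

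The first task is to pin down how family $i$ sits inside $T$. The ``no label too high'' hypothesis places every box or edge label of family $i$ strictly below the north edge of row $i$; combined with ballotness and the content condition (there is a gene $i_1$, none $i_{\mu_i+1}$) I would show that the scan---which proceeds through rows $i,i+1,\dots$ in order and within each row from right to left---eventually reaches a row past which no family-$i$ data remain, where it terminates (this is where the convention after (B.9) is invoked). Within a fixed row $r$: by (S.1) and (S.4) the family-$i$ box labels of row $r$ strictly increase in gene index from left to right, and by ballotness these indices are consecutive; by (S.3) and (S.4) the family-$i$ labels on the south edge of row $r$---at most one per column, since (S.3) forbids an edge from carrying two labels of the same family---have weakly decreasing gene index as we scan right to left. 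Finally I must control the passage from row $r$ to row $r+1$: the rightmost family-$i$ box label of row $r+1$ has gene index either one less than, or equal to, the smallest index used in row $r$, and it may carry a $\star$ only when it is starrable (a box in a row $>i$ holding $i_j$ with $i_{j+1}$ not immediately to its right); these four combinations---drop-by-one versus repeat, crossed with starred versus not---are precisely the four members of $\mathtt{startrow}$.

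Granting this description, $\sigma_i$ assembles row by row. Scanning row $r$ right to left, each family-$i$ box label emits a (B.1), except that the first such label of a row $r>i$ is recorded together with the jump into row $r$ by a member of $\mathtt{startrow}$: the pure jump (B.6) when that label is merely the next smaller gene, unstarred; and one of (B.7)--(B.9) when it is starred and/or repeats the last gene. The box-label scan of row $r$ is closed off by (B.2); then the south edge of row $r$ is scanned, its labels interleaved with column skips (B.3), where a drop by one in gene index is (B.4) and a repeat of the last gene is (B.5), spelling a word in $\mathtt{edges}$. Hence row $i$ contributes $\mathtt{boxes}\cdot\mathtt{edges}$ and every later visited row contributes $\mathtt{startrow}\cdot\mathtt{boxes}\cdot\mathtt{edges}$; concatenating and regrouping the $\mathtt{startrow}$'s gives $\mathtt{boxes}\,[\,\mathtt{edges}\ \mathtt{startrow}\ \mathtt{boxes}\,]^{*}\,\mathtt{edges}$, which is (\ref{eqn:canonicalform}). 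A row carrying family-$i$ data only on its south edge contributes a $\mathtt{boxes}$ block consisting of just its terminal (B.2).

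The genuinely delicate point is the row-boundary analysis: showing that on crossing from row $r$ down to row $r+1$ the incoming gene index never rises, drops by at most one, and is starred precisely in the ``starrable'' configuration, so that only the four $\mathtt{startrow}$ pieces arise. This uses semistandardness, ballotness, and the definition of starrable box simultaneously, and it mirrors the case check hidden inside the proof of Proposition~\ref{claim:canonical}. A secondary technicality is checking that the south-edge scan of a single row really lands in $\mathtt{edges}$---in particular that, after processing a gene on one edge, the scan never returns to a strictly smaller gene except through the outer Kleene star---which again follows from the right-to-left monotonicity of edge gene indices established above.
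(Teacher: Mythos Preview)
Your plan is sound but does far more than the lemma requires, and this is worth contrasting with the paper's own two-sentence argument. The paper observes only that semistandardness forces, in each row, the family-$i$ box labels to occupy a contiguous run of columns lying strictly to the right of every family-$i$ label on that row's lower edge; from this the block structure $\mathtt{boxes}\,[\mathtt{edges}\ \mathtt{startrow}\ \mathtt{boxes}]^*\,\mathtt{edges}$ is immediate. The point is that, despite its layered presentation, the language $\mathtt{edges}$ is just $\{(\text{B.3}),(\text{B.4}),(\text{B.5})\}^{*}$ and $\mathtt{startrow}$ is just the set $\{(\text{B.6}),\dots,(\text{B.9})\}$, so the only genuine constraint in~(\ref{eqn:canonicalform}) is that the letters (B.1) and (B.2) occur in runs of the form $(\text{B.1})^{*}(\text{B.2})$, separated by one $\mathtt{startrow}$ letter each. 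Contiguity and the right-of-edges property give exactly this.

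The gene-index bookkeeping you outline---consecutiveness of box-label subscripts within a row, the at-most-one drop across a row boundary, the starrability analysis---is what one needs to see that the correspondences (B.1)--(B.9) are actually exhaustive for tableaux in $\mathtt{StarBallotGen}$ and that $\psi$ inverts $\phi$; but it is not needed to place $\sigma_i$ in the regular language~(\ref{eqn:canonicalform}). Two small corrections if you do pursue your route: first, the consecutiveness of box-label gene indices in a single row follows already from (S.1), (S.2) and (S.4), not from ballotness---a missing intermediate gene $i_{j+1}$ would be forced by (S.4) into one of the two adjacent columns already occupied by family $i$, violating (S.2). Second, the piece (B.6) does \emph{not} record the first box label of the new row; it is a pure row-jump, after which that first label (if unstarred and of the next smaller gene) is emitted as an ordinary (B.1).
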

\begin{proof}
Since $T$ is semistandard, in any row, all box labels of family $i$ are contiguous
and strictly right of any (lower) edge labels of that family on that row. The lemma follows.
\end{proof}

We describe a claimed filling $P:=\psi(T)$
of $\Delta_{\lambda,\mu,\nu}$. There
are $k$ $1$'s on each side of $\Delta_{\lambda,\mu,\nu}$; to the $i$th $1$ from the left on the $\nu$-boundary of $\Delta_{\lambda,\mu,\nu}$,
place puzzle pieces in the order indicated by $\sigma_i$. That is attach the next (combination) piece using the northmost $\backslash$ edge on its west side, if it exists. Otherwise attach at 
the piece's unique southern edge. We attach at the unique --- or $\backslash$ edge of the thus far constructed track. Fill in the order $i=1,2,3,\ldots,k$. Now
stack \begin{picture}(10,10)
\put(-5,-6){
\Scale[0.3]{\begin{tikzpicture}[line cap=round,line join=round,>=triangle 45,x=1.0cm,y=1.0cm]
\clip(1.12,-0.4) rectangle (2.76,2.1);
\fill[color=cccccc,fill=cccccc,fill opacity=1.0] (2,1.73) -- (1.5,0.87) -- (2,0) -- (2.5,0.87) -- cycle;
\draw [color=cccccc] (2,1.73)-- (1.5,0.87);
\draw [color=cccccc] (1.5,0.87)-- (2,0);
\draw [color=cccccc] (2,0)-- (2.5,0.87);
\draw [color=cccccc] (2.5,0.87)-- (2,1.73);
\end{tikzpicture}}}
\end{picture}'s northwest of each \begin{picture}(10,10)
\put(-5,-2){
\Scale[0.3]{
\begin{tikzpicture}[line cap=round,line join=round,>=triangle
45,x=1.0cm,y=1.0cm]
\clip(0.67,-0.37) rectangle (2.27,1.04);
\fill[color=black,fill=black,fill opacity=1.0] (1.5,0.87) -- (1,0) --
(2,0) -- cycle;
\end{tikzpicture}}}
\end{picture} until (we claim) it reaches one of the pieces of (A.6)--(A.9) at the southmost 
$/$ edge, or the $\lambda$-boundary of $\Delta_{\lambda,\mu,\nu}$. Complete using white triangles.

Sections~\ref{sec:welldefphi}--\ref{sec:weight_pres} prove $\phi$ and $\psi$ are
well-defined and weight-preserving maps between 
\[{\mathcal P}:=\{\text{modified KV-puzzles of $\Delta_{\lambda,\mu,\nu}$}\} \text{ and } {\mathcal T}:={\tt StarBallotGen}_{\mu}(\nu/\lambda).\] 
Semistandardness (specifically (S.4)) implies that knowing the locations of labels 
of family $i$, and which labels are repeated or $\star$-ed, uniquely determines the gene(s) in each location.
The injectivity of $\phi$ and $\psi$ is easy from this. Moreover,
by construction (cf.~Lemma~\ref{lemma:sigmadecomp}), 
the two maps are mutually reversing. Thus, Theorem~\ref{thm:main} follows from Theorem~\ref{thm:reformulated}.\qed

\subsection{Well-definedness of $\phi:{\mathcal P}\to {\mathcal T}$}
\label{sec:welldefphi}
Let $P\in {\mathcal P}$ be a modified KV-puzzle for $\Delta_{\lambda,\mu,\nu}$.
For the track $\pi_i$, let
\begin{picture}(10,10)
\put(-5,-2){
\Scale[0.3]{
\begin{tikzpicture}[line cap=round,line join=round,>=triangle
45,x=1.0cm,y=1.0cm]
\clip(0.67,-0.37) rectangle (2.27,1.04);
\fill[color=black,fill=black,fill opacity=1.0] (1.5,0.87) -- (1,0) --
(2,0) -- cycle;
\end{tikzpicture}}}
\end{picture}$_{i,j}$ refer to the $j$th black triangle
seen along $\pi_i$ (as read from southwest to northeast). 
Let ${\mathbb S}$ denote any of the (combination) pieces that appear in {\tt startrow}. Similarly, we let ${\mathbb S}_{i,j}$ be the $j$th such piece on $\pi_i$.

Figure~\ref{fig:bigex} illustrates the ``ragged honeycomb'' structure of modified KV-puzzles.
To formalize this, first note by inspection that the $\pi_i$ do not intersect. Second we have:

\begin{claim}
\label{claim:AA}
There is a bijective correspondence between the $1$'s on the $\lambda$-boundary and the 
\begin{picture}(10,10)
\put(-5,-2){
\Scale[0.3]{
\begin{tikzpicture}[line cap=round,line join=round,>=triangle
45,x=1.0cm,y=1.0cm]
\clip(0.67,-0.37) rectangle (2.27,1.04);
\fill[color=black,fill=black,fill opacity=1.0] (1.5,0.87) -- (1,0) --
(2,0) -- cycle;
\end{tikzpicture}}}
\end{picture}'s in $\pi_1$. Specifically, the
$j$th $1$ on the $\lambda$-boundary is the terminus of
the NWray of 
\begin{picture}(10,10)
\put(-5,-2){
\Scale[0.3]{
\begin{tikzpicture}[line cap=round,line join=round,>=triangle
45,x=1.0cm,y=1.0cm]
\clip(0.67,-0.37) rectangle (2.27,1.04);
\fill[color=black,fill=black,fill opacity=1.0] (1.5,0.87) -- (1,0) --
(2,0) -- cycle;
\end{tikzpicture}}}
\end{picture}$_{1,j}$.
Similarly, there is a bijective correspondence between
\begin{picture}(10,10)
\put(-5,-2){
\Scale[0.3]{
\begin{tikzpicture}[line cap=round,line join=round,>=triangle
45,x=1.0cm,y=1.0cm]
\clip(0.67,-0.37) rectangle (2.27,1.04);
\fill[color=black,fill=black,fill opacity=1.0] (1.5,0.87) -- (1,0) --
(2,0) -- cycle;
\end{tikzpicture}}}
\end{picture}$_{i+1,j}$ and ${\mathbb S}_{i,j}$ in that the former's NWray terminates at the
southmost $/$ edge of the latter.
\end{claim}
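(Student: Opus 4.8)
The plan is to derive both correspondences from the geometry of the ``ragged honeycomb'', the main inputs being Lemma~\ref{lemma:NWray}, Proposition~\ref{claim:canonical}, and the observation --- immediate from the move-list (A.1)--(A.9) --- that a track only ever advances north, northeast, or east. I would first record the structural preliminaries: each $\pi_\ell$ is thus \emph{monotone} (no two of its pieces are related by a strictly northwest displacement), the $\pi_\ell$ are pairwise non-intersecting, and they are \emph{nested} --- $\pi_\ell$ together with the arc of $\partial\Delta$ from its foot to its head that runs along the $\lambda$-side separates the $\lambda$-boundary from $\pi_{\ell+1}$ --- and every startrow piece, being flow-carrying, lies on some track. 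I would also note that an NWray, being a chain of upward-pointing grey rhombi (a piece type no track traverses), meets no track except at its base black triangle and its terminus, that each upward-pointing grey rhombus has a unique predecessor (the piece glued along its lower-right ``/''-edge), so distinct NWrays never share a rhombus, and that, being monotone northwest staircases, distinct NWrays never cross.

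Then I would fix a black triangle $\mathtt b$ on $\pi_{i+1}$ with $i\ge 1$. By monotonicity of $\pi_{i+1}$, no piece of $\pi_{i+1}$ lies strictly northwest of $\mathtt b$, so the NWray of $\mathtt b$ runs into the region northwest of $\pi_{i+1}$ without re-meeting $\pi_{i+1}$; it cannot cross $\pi_i$, hence cannot reach the $\lambda$-boundary, so by Lemma~\ref{lemma:NWray} it ends at a startrow piece $\mathbb S$. Since $\mathbb S$ lies on a track, and that track meets the region between $\pi_i$ and $\pi_{i+1}$, by nestedness it is $\pi_i$ or $\pi_{i+1}$; it is not $\pi_{i+1}$, since $\mathbb S$ lies weakly northwest of $\mathtt b$ while no piece of $\pi_{i+1}$ does; hence $\mathbb S$ lies on $\pi_i$, glued to the NWray at its south-then-eastmost edge. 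The same analysis applied to $\pi_1$ shows that the NWray of a black triangle on $\pi_1$ can reach neither a startrow piece of $\pi_1$ (none lies strictly northwest of the black triangle) nor one of any lower track, and there is no $\pi_0$, so it terminates on the $\lambda$-boundary.

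These rules then define an order-preserving, injective map from the black triangles of $\pi_{i+1}$ (read southwest to northeast) to the startrow pieces of $\pi_i$, and likewise from the black triangles of $\pi_1$ to the $1$'s on the $\lambda$-boundary; injectivity and order-preservation hold because NWrays issuing from one track are disjoint, non-crossing, monotone staircases. To upgrade injectivity to bijectivity I would count using Proposition~\ref{claim:canonical}: $\pi_\ell$ contains exactly one black triangle per \texttt{boxes}-block, hence $1+s_\ell$ black triangles, where $s_\ell$ is its number of startrow pieces. Injectivity gives $1+s_{i+1}\le s_i$ for $1\le i\le k-1$ and $1+s_1\le k$; chaining these forces $k-1\le s_1\le k-1$, so every inequality is an equality, $s_\ell=k-\ell$, and each map is a bijection.

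I expect the main obstacle to be the second paragraph: pinning down where each NWray terminates. Ruling out the $\lambda$-boundary for tracks below $\pi_1$, ruling out the ``wrong'' track, and ruling out a startrow piece of the starting track in the $\pi_1$ case all rest on combining Lemma~\ref{lemma:NWray} with the rigid nested, monotone, non-crossing structure of the tracks --- precisely the place where one uses that a \emph{modified} KV-puzzle has the honeycomb shape rather than a looser filling --- and making this planar-separation bookkeeping airtight is the delicate step.
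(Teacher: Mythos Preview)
Your proof is correct and relies on exactly the two inputs the paper cites (Proposition~\ref{claim:canonical} and Lemma~\ref{lemma:NWray}), so the approaches agree. The paper's own proof is a one-line appeal to these results; you have supplied the planar bookkeeping (monotonicity, nesting, non-crossing) and the explicit counting argument---chaining $1+s_{i+1}\le s_i$ and $1+s_1\le k$ to force $s_\ell=k-\ell$---that the paper leaves to the reader.
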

\begin{proof}
Follows by combining Proposition~\ref{claim:canonical}
and Lemma~\ref{lemma:NWray}.
\end{proof}

Define ${\mathcal L}_i$ to be the {\bf left sequence} of $\pi_i$: Start at the 
southwest corner of $\Delta_{\lambda,\mu,\nu}$ and read the $\{\rightarrow, \nearrow\}$-lattice path that starts along the $\nu$-boundary and travels up the left boundary of
$\pi_i$. The $\{0,1\}$-sequence records the labels of the edges seen.
Similarly, define ${\mathcal R}_i$ to be the {\bf right sequence} of $\pi_i$ by
travelling up the right side of $\pi_i$ \emph{but 
only reading the $\rightarrow$ and $\nearrow$ edges.} (In Figure~\ref{fig:bigex},
${\mathcal L}_1=0^6 1 0^{10} 1 0 1 0(=\lambda)$
while ${\mathcal R}_1=0^{2}10^{11}1 0^210^2$.) 

In view of Claim~\ref{claim:AA}, the following is ``graphically'' clear by considering
the $n$ diagonal strips through $P$:
\begin{claim}
\label{claim:B}
${\mathcal L}_1=\lambda$, ${\mathcal L}_{i+1}={\mathcal R}_i$ for $1\leq i\leq k-1$,
and $R_k=\nu$.
\end{claim}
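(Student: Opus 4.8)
The plan is to read all three identities off a single geometric picture, with Claim~\ref{claim:AA} doing the work. I would decompose $\Delta_{\lambda,\mu,\nu}$ into its $n$ \emph{diagonal strips} $\Sigma_1,\dots,\Sigma_n$, the maximal ribbons of triangular cells lying between consecutive grid lines parallel to the $\mu$-boundary, numbered so that the southern edge of $\Sigma_j$ is the $j$th segment of the $\nu$-boundary (in reading order) and its northwestern edge is the $j$th segment of the $\lambda$-boundary. The first observation is elementary: the $\{\rightarrow,\nearrow\}$-lattice path defining $\mathcal{L}_i$, and the recording of $\rightarrow,\nearrow$ edges defining $\mathcal{R}_i$, each produce exactly one symbol per strip, in the order $\Sigma_1,\dots,\Sigma_n$. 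It therefore suffices to compare, for each $j$, the symbol read in $\Sigma_j$ by $\mathcal{L}_{i+1}$ with the one read there by $\mathcal{R}_i$; adopting the conventions that $\pi_0$ is the $\lambda$-boundary and $\pi_{k+1}$ is empty, the identities $\mathcal{L}_1=\lambda$ and $\mathcal{R}_k=\nu$ become the cases $i=0$ and $i=k$ of $\mathcal{L}_{i+1}=\mathcal{R}_i$.

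The engine is a description of the region between two consecutive tracks. Combining Proposition~\ref{claim:canonical}, Lemma~\ref{lemma:NWray}, and Claim~\ref{claim:AA}, the part of $P$ strictly between $\pi_i$ and $\pi_{i+1}$ consists only of white $0$-triangles and the NWrays of the black triangles on $\pi_{i+1}$, each NWray being a stack of grey rhombi running from a black triangle of $\pi_{i+1}$ up to the south-then-eastmost edge of a piece $\mathbb{S}_{i,j}$ from \texttt{startrow} on $\pi_i$ (and, for $\pi_1$, up to the $\lambda$-boundary instead). A short inspection of the six puzzle pieces and the combination pieces (A.1)--(A.9) then pins down three facts: each NWray, being built of ``upward pointing'' grey rhombi, occupies a contiguous stretch of a \emph{single} strip $\Sigma_j$, with distinct NWrays in distinct strips; the only $1$-labeled edges on the left boundary of a track are the ``$/$'' edges of its black triangles; and the only $1$-labeled $\{\rightarrow,\nearrow\}$ edges on the right boundary of a track are the south-then-eastmost edges of its \texttt{startrow} pieces --- all other boundary edges (bordering white triangles, or the long ``$/$'' sides of the NWray rhombi) being $0$. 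In particular the region strictly right of $\pi_k$ is all white, since no NWray can enter it.

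Granting this, fix $j$. If $\Sigma_j$ meets no NWray of $\pi_{i+1}$, then the edges read in $\Sigma_j$ by $\mathcal{L}_{i+1}$ and by $\mathcal{R}_i$ border white triangles (or both lie on the $\nu$-boundary, before either track enters $\Sigma_j$, where $\mathcal{L}_{i+1}$ and $\mathcal{R}_i$ record the same $\nu$-symbol), so both words carry a $0$. If $\Sigma_j$ does meet such an NWray, Claim~\ref{claim:AA} says it joins a unique black triangle of $\pi_{i+1}$ --- contributing the $1$ on its ``$/$'' edge, which lies on the left boundary of $\pi_{i+1}$ inside $\Sigma_j$ --- to a unique piece $\mathbb{S}_{i,j'}$ of \texttt{startrow} on $\pi_i$, contributing the $1$ on its south-then-eastmost edge, which lies on the right boundary of $\pi_i$ inside $\Sigma_j$ and is of type $\rightarrow$ or $\nearrow$; so both words carry a $1$. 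Hence $\mathcal{L}_{i+1}=\mathcal{R}_i$ for $1\le i\le k-1$. Taking $i=0$ gives $\mathcal{L}_1=\lambda$ (using that the NWrays of $\pi_1$'s black triangles land on the $\lambda$-boundary and that, by the count implicit in Claim~\ref{claim:AA}, $\pi_1$ has exactly as many black triangles as $\lambda$ has $1$'s), and taking $i=k$ gives $\mathcal{R}_k=\nu$ (since $\pi_k$ has no \texttt{startrow} piece, every symbol of $\mathcal{R}_k$ read past the initial $\nu$-boundary stretch is $0$).

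The substantive work is the local inspection in the second paragraph: checking, piece by piece, which boundary edges of (A.1)--(A.9) carry a $1$, and that an ``upward pointing'' grey rhombus really lies inside one diagonal strip, so that an NWray and the edge it attaches to genuinely share a strip. With Claim~\ref{claim:AA} and Proposition~\ref{claim:canonical} in hand this is a bounded check, made transparent by the ``ragged honeycomb'' of Figure~\ref{fig:bigex}; the only real risk is mis-orienting a single piece. Everything else is bookkeeping, which is why the statement can fairly be called ``graphically clear''.
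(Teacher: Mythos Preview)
Your proposal is correct and follows precisely the approach the paper itself indicates: the paper's entire argument is the sentence ``In view of Claim~\ref{claim:AA}, the following is `graphically' clear by considering the $n$ diagonal strips through $P$,'' and you have simply unpacked that sentence --- decomposing into diagonal strips, noting that each of $\mathcal{L}_{i+1}$ and $\mathcal{R}_i$ records one symbol per strip, and using Claim~\ref{claim:AA} together with the local inspection of pieces (A.1)--(A.9) to match those symbols strip by strip. There is no substantive difference in method; your write-up is a faithful expansion of what the paper leaves implicit.
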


Let $T^{(i)}$ be the tableau after adding labels of family $1,2,\ldots,i$. We declare
$T^{(0)}$ to be the empty tableau of shape $\lambda/\lambda$. Let $\nu^{(i)}$ be the
outer shape of $T^{(i)}$ (interpreted as the $\{0,1\}$-sequence for its
lattice path).

\begin{claim}
\label{claim:C}
${\mathcal L}_i=\nu^{(i-1)}$ and ${\mathcal R}_i=\nu^{(i)}$.
\end{claim}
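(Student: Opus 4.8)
The plan is to prove the two equalities together, by induction on $i$, using Claim~\ref{claim:B} to pass between consecutive indices. Since $T^{(0)}$ is the empty tableau of shape $\lambda/\lambda$, we have $\nu^{(0)}=\lambda$, so the first equality at $i=1$, namely ${\mathcal L}_1=\lambda=\nu^{(0)}$, is exactly the first assertion of Claim~\ref{claim:B}. For the inductive step it suffices to establish the implication: \emph{if} ${\mathcal L}_i=\nu^{(i-1)}$, \emph{then} ${\mathcal R}_i=\nu^{(i)}$. Granting this, Claim~\ref{claim:B} gives ${\mathcal L}_{i+1}={\mathcal R}_i=\nu^{(i)}$, which is the hypothesis needed at the next index, so running the induction through $i=k$ proves all of Claim~\ref{claim:C}; as a bonus, comparing ${\mathcal R}_k=\nu^{(k)}$ with Claim~\ref{claim:B} then forces $\nu^{(k)}=\nu$, i.e.\ $T:=\phi(P)$ really does have shape $\nu/\lambda$.

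\textbf{The tableau side of the step.} Read $\pi_i$ from southwest to northeast as a word $\omega_i$ in the alphabet (A.1)--(A.9); by Proposition~\ref{claim:canonical} it has the form~(\ref{eqn:canonicalform}). By construction, $T^{(i)}$ arises from $T^{(i-1)}$ by scanning $\omega_i$ and executing the instructions (B.1)--(B.9) attached to its letters (with the usual proviso that the initial run of northeast rhombi lays down the leftmost box labels of family $i$ in row $i$). Hence the boxes of $\nu^{(i)}/\nu^{(i-1)}$ are exactly those created by the box-placing instructions (B.1), (B.7), (B.8), (B.9) — one new box per occurrence along $\pi_i$ of a box-placing piece from (A.1), (A.7), (A.8), (A.9) — while the remaining pieces either lay down lower edge labels (the edge pieces (A.4)/(B.4) and (A.5)/(B.5)) or merely change the current row ((A.2), (A.6)) or column ((A.3)). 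Since edge labels and such navigation leave an outer shape unchanged, $\nu^{(i)}$ is obtained from $\nu^{(i-1)}={\mathcal L}_i$ by performing, in left-to-right order along $\pi_i$ and once per box-placing piece, the elementary ``lengthen the current row by one box'' move on $\{0,1\}$-sequences prescribed by the instruction attached to that piece.

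\textbf{The puzzle side, and the main obstacle.} It remains to see that \emph{traversing} $\pi_i$ turns the puzzle frontier ${\mathcal L}_i$ into ${\mathcal R}_i$ by precisely this list of elementary moves, in the same order and affecting the same rows; this is where I expect the real work to lie. I would argue strip by strip, exactly as in the proof of Claim~\ref{claim:B}: cut $P$ into its $n$ diagonal strips and check that, within each strip, the datum recorded in ${\mathcal R}_i$ differs from the datum recorded in ${\mathcal L}_i$ exactly when that strip meets a box-placing piece of $\pi_i$, and then only by the elementary move that piece's instruction ((B.1), (B.7), (B.8) or (B.9)) prescribes. Two points need care. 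First, the convention that ${\mathcal R}_i$ is read off only the $\rightarrow$ and $\nearrow$ edges of the right side of $\pi_i$: this is exactly what makes the edge pieces (A.4), (A.5) and the column-skip pieces (A.3) invisible to the frontier, matching the fact that (B.4), (B.5), (B.3) do not change the shape. Second, the NWrays of the black triangles of $\pi_i$, whose termini were pinned down in Claim~\ref{claim:AA} — at the $\lambda$-boundary when $i=1$, and at the {\tt startrow} pieces ${\mathbb S}_{i-1,j}$ in general. These NWrays synchronize the ``which row am I in'' bookkeeping on the two sides: because the NWray of the $j$th black triangle of $\pi_i$ terminates at ${\mathbb S}_{i-1,j}$ (or at the $j$th $1$ of $\lambda$), a box-placing piece situated between the $j$th and $(j+1)$st black triangles of $\pi_i$ is forced to lengthen exactly the row assigned to it by (B.1)--(B.9). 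Once this strip comparison is carried out, ${\mathcal R}_i=\nu^{(i)}$, which closes the induction and proves the claim.
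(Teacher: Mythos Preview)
Your proof is correct and is essentially the paper's approach with the inductive scaffolding made explicit: the paper's entire proof is that both assertions follow ``by inspection of the correspondences (B.1)--(B.9),'' with the parenthetical remark that the second also follows from the first via Claim~\ref{claim:B}. Your piece-by-piece analysis is precisely this inspection; the NWray and Claim~\ref{claim:AA} material you invoke at the end is more machinery than the purely local comparison of ${\mathcal L}_i$ to ${\mathcal R}_i$ along $\pi_i$ actually requires.
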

\begin{proof}
Both assertions follow by inspection of the correspondences (B.1)--(B.9). (Also the
second follows from the first, by Claim~\ref{claim:B}.)
\end{proof}

It is straightforward from
Claims~\ref{claim:B} and~\ref{claim:C} that $T=\phi(P)$
is semistandard in the sense of (S.1)--(S.4) of \cite{PeYo}. By Proposition~\ref{claim:canonical},
no label of $T$ is $\star$-ed unless it is the rightmost box label of its
family in a row ($>i$). Since labels of family $i$ are placed in the boxes of
row $i$ or below, no label of $T$ can be too high. Since
${\mathcal R}_{k}=\nu$, the shape of $T$ is $\nu/\lambda$. 

\begin{claim}
\label{claim:D}
$T$ has content $\mu$.
\end{claim}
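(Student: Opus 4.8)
The plan is to prove $\alpha_i = \mu_i$ for each $i\in[k]$, where the content of $T$ is $(\alpha_1,\dots,\alpha_k)$ and $\alpha_i$ is the number of distinct genes of family $i$ in $T$. Every family-$i$ label is placed according to the grammar decomposition (\ref{eqn:canonicalform}) of $\pi_i$ via the dictionary (B.1)--(B.9), so $\alpha_i$ is computed by a ``gene counter'' read along $\pi_i$ from southwest to northeast: the leading run of grey boxes places the largest genes $i_{\mu_i}, i_{\mu_i-1},\dots$ of family $i$; each subsequent occurrence of (B.1), (B.4) or (B.7) advances the counter to the next smaller gene; while (B.5), (B.8) and (B.9) re-use the current gene and (B.2), (B.3), (B.6) only move position within $\nu/\lambda$. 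Thus the Claim reduces to showing that the counter stands exactly at $i_1$ when $\pi_i$ reaches its terminus on the $\mu$-boundary, equivalently that the number of ``next smaller gene'' steps along $\pi_i$ is $\mu_i$.

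To prove that, I would feed in the boundary data already assembled. The track $\pi_i$ joins the $i$th $1$ (from the left) on the $\nu$-boundary to the $i$th $1$ (from the top) on the $\mu$-boundary, and the positions of these two $1$'s are the usual lattice-path encodings of $\nu_i$ and $\mu_i$. By Claims~\ref{claim:B} and~\ref{claim:C} we have $\mathcal{L}_i=\nu^{(i-1)}$ and $\mathcal{R}_i=\nu^{(i)}$, so the skew shape $\nu^{(i)}/\nu^{(i-1)}$ traced along $\pi_i$ is precisely the support of the family-$i$ box labels; and Claim~\ref{claim:AA}, with Proposition~\ref{claim:canonical} and Lemma~\ref{lemma:NWray}, fixes how the NWrays of the black triangles and {\tt startrow} pieces of $\pi_i$ attach, hence in which rows and columns of $\nu/\lambda$ the family-$i$ labels are forced to lie. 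Reading these off row by row for $r=i,i+1,\dots,k$ and telescoping the per-row counts against the terminal position of $\pi_i$ on the $\mu$-boundary should give $\mu_i$ genes altogether. An alternative packaging is to induct on $i$, deleting $\pi_1$ and its NWrays so that $\lambda$ is replaced by $\nu^{(1)}$ and the statement for $\{2,\dots,k\}$ reduces to a smaller instance.

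The step I expect to be the real work is the bookkeeping just sketched: matching the count of gene-advancing pieces strictly interior to $\pi_i$ with the gap between the $\nu$- and $\mu$-boundary positions, while correctly absorbing the combination pieces (B.5), (B.8), (B.9) that do \emph{not} advance the gene, and correctly handling the ``ragged honeycomb'' interleaving of $\pi_i$ with the NWrays of $\pi_{i+1}$. I would organize it exactly as in the proofs of Claims~\ref{claim:AA}--\ref{claim:C}, by cutting $P$ along its $n$ diagonal strips: each strip meets $\pi_i$ in a combinatorially controlled way, and tracking the gene counter strip by strip converts the desired equality into a telescoping identity of the form $\sum_{r\ge i}(\text{genes of family $i$ first placed in row }r)=\mu_i$, with no case analysis on the KV-pieces beyond what Proposition~\ref{claim:canonical} already provides.
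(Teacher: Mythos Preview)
Your reduction is exactly right: $\alpha_i$ equals the number of gene-advancing letters in $\pi_i$, namely the pieces of types (B.1), (B.4), (B.7), while (B.5), (B.8), (B.9) repeat a gene and the rest place nothing. This is precisely what the paper uses, just phrased in terms of individual puzzle pieces rather than combination pieces (their count is $\#\text{NE-grey}+\#\text{green}-\#\text{purple}$, which you can check agrees letter by letter with yours).

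Where your proposal diverges is in the mechanism for showing this count equals $\mu_i$. You sketch a row-by-row telescoping or a diagonal-strip argument, possibly an induction on $i$. That would work, but it is considerably more labor than necessary, and the plan as written is vague at the crucial point (``should give $\mu_i$ genes altogether''). The paper instead extracts a single global invariant: the \emph{vertical height} $h_i$ of the right endpoint of $\pi_i$. On one hand, $h_i$ is read directly off the $\mu$-boundary as the number of segments strictly below the $i$th $1$, namely $(k-i)+\mu_i$. On the other hand, walking along $\pi_i$ one sees $h_i=\alpha_i+\#\text{(black up-triangles in }\pi_i\text{)}$, since the gene-advancing pieces and the black up-triangles are exactly the pieces that gain a unit of height while the purple pieces cancel one. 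By Proposition~\ref{claim:canonical} and Claim~\ref{claim:AA} the number of black up-triangles in $\pi_i$ is $k-i$, and equating the two expressions for $h_i$ gives $\alpha_i=\mu_i$ in one line. Your telescoping and the paper's height count are really the same sum, but packaging it as ``compute the height of the endpoint two ways'' eliminates the per-row bookkeeping you anticipated as the real work.
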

\begin{proof}
Let $\beta$ be the content of $T$. Then $\beta_i$ is the number of (distinct) genes of family $i$ that appear in $T$, which, in terms of $P$, is the number
of 
\begin{picture}(10,10)
\put(-5,-1){
\Scale[0.3]{\begin{tikzpicture}[line cap=round,line join=round,>=triangle
45,x=1.0cm,y=1.0cm]
\clip(-0.21,-0.18) rectangle (1.72,1.04);
\fill[color=cccccc,fill=cccccc,fill opacity=1.0] (0,0) -- (0.5,0.87) --
(1.5,0.87) -- (1,0) -- cycle;
\end{tikzpicture}}}
\end{picture}
and
\begin{picture}(10,10)
\put(-5,-6){
\Scale[0.3]{\begin{tikzpicture}[line cap=round,line join=round,>=triangle 45,x=1.0cm,y=1.0cm]
\clip(1.12,-0.4) rectangle (2.76,2.1);
\fill[color=qqffqq,fill=qqffqq,fill opacity=1.0] (2,1.73) -- (1.5,0.87) -- (2,0) -- (2.5,0.87) -- cycle;
\draw [color=qqffqq] (2,1.73)-- (1.5,0.87);
\draw [color=qqffqq] (1.5,0.87)-- (2,0);
\draw [color=qqffqq] (2,0)-- (2.5,0.87);
\draw [color=qqffqq] (2.5,0.87)-- (2,1.73);
\end{tikzpicture}}}
\end{picture} \ 
in $\pi_i$ minus the number of purple KV-pieces
\begin{picture}(10,10)
\put(-5,-2){
\Scale[0.3]{\begin{tikzpicture}[line cap=round,line join=round,>=triangle
45,x=1.0cm,y=1.0cm]
\clip(0.72,0) rectangle (2.19,1.92);
\fill[color=zzqqzz,fill=zzqqzz,fill opacity=1.0] (1,1.73) -- (2,1.73) --
(1.5,0.87) -- cycle;
\draw[color=zzqqzz] (1.5,0.87) -- (2, 0);
\end{tikzpicture}}}
\end{picture}
in $\pi_i$. Thus the vertical height $h_i$ 
of $\pi_i$ (at its right end\emph{point}) is 
$\beta_i+\#$
\begin{picture}(10,10)
\put(-5,-2){
\Scale[0.3]{
\begin{tikzpicture}[line cap=round,line join=round,>=triangle
45,x=1.0cm,y=1.0cm]
\clip(0.67,-0.37) rectangle (2.27,1.04);
\fill[color=black,fill=black,fill opacity=1.0] (1.5,0.87) -- (1,0) --
(2,0) -- cycle;
\end{tikzpicture}}}
\end{picture}. However, $h_i$ equals
the number of line segments strictly below the $i$th $1$ on the $\mu$-boundary; i.e.,
$h_i=n-i-(n-k-\mu_i)=(k-i)+\mu_i$. By Claims~\ref{claim:AA} and~\ref{claim:canonical},
$\#$
\begin{picture}(10,10)
\put(-5,-2){
\Scale[0.3]{
\begin{tikzpicture}[line cap=round,line join=round,>=triangle
45,x=1.0cm,y=1.0cm]
\clip(0.67,-0.37) rectangle (2.27,1.04);
\fill[color=black,fill=black,fill opacity=1.0] (1.5,0.87) -- (1,0) --
(2,0) -- cycle;
\end{tikzpicture}}}
\end{picture}
$=(k-i)$, hence $\beta=\mu$, as desired.
\end{proof}
Finally,
\begin{claim}
\label{claim:ballot}
$T$ is ballot.
\end{claim}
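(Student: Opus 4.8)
The plan is to deduce ballotness from the fact (observed just before Claim~\ref{claim:AA}) that the tracks $\pi_1,\dots,\pi_k$ are pairwise non-crossing, together with the bookkeeping of Claims~\ref{claim:AA}--\ref{claim:D}. Recall the classical criterion: a (possibly skew, possibly hole-containing) semistandard tableau is ballot, i.e.\ its right-to-left column reading word is a lattice word, if and only if for every value $i$ and every column index $c$,
\[
\#\{\text{cells labeled }i\text{ in columns}\ge c\}\ \ge\ \#\{\text{cells labeled }i+1\text{ in columns}\ge c\}.
\]
Prefixes that end inside a column create no additional constraint, because (S.2) forces any $i$ to lie strictly above any $i+1$ in a column, so a prefix ending at an $(i+1)$ has already passed the matching $i$ of that column. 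Since genomic ballotness asks for ballotness of the classical tableau obtained by \emph{any} choice of one representative per gene, it suffices to verify the displayed inequality for every such choice.

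First I would record that the cells and lower-edge slots carrying a fixed gene $i_j$ of $T$ form a contiguous run of columns inside a single row: this follows from (S.1) (strict lexicographic increase along a row prevents an $i_j$ strictly between two $i_j$'s) and (S.4), together with the grammar of Proposition~\ref{claim:canonical}, in which repeated labels of one gene are produced only by consecutive green pieces reading off of a single black triangle, and starred labels and lower-edge labels sit at the right end of the relevant family-$i$ run in their row. Consequently the extremal case of the displayed inequality is obtained by choosing, for family $i$, the leftmost cell of each $i$-gene, and, for family $i+1$, the rightmost cell of each $(i+1)$-gene; and these leftmost/rightmost occupied columns are, via the correspondences (B.1)--(B.9) and Claim~\ref{claim:C}, read directly off the left and right sequences ${\mathcal L}_i=\nu^{(i-1)}$ and ${\mathcal R}_i=\nu^{(i)}$ of $\pi_i$.

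The core of the argument is then to produce, for each $i$, a matching from the $(i+1)$-cells of the chosen representative of $T$ into the $i$-cells, sending each $(i+1)$-cell to an $i$-cell in a weakly larger column; summing over the matching yields the displayed inequality uniformly in all choices. Such a matching is supplied by the geometry of the puzzle: $\pi_i$ and $\pi_{i+1}$ are non-crossing arcs whose endpoints on the $\nu$-boundary (and on the $\mu$-boundary) are the $i$th and $(i+1)$st consecutive $1$'s, so $\pi_{i+1}$ lies immediately to the southeast of $\pi_i$ and the two remain in the same relative order on every diagonal cross-section of $\Delta_{\lambda,\mu,\nu}$; moreover, by Claim~\ref{claim:AA}, $\pi_{i+1}$ meets $\pi_i$ through the NWrays of its black triangles. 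Translating this interlacing through (B.1)--(B.9) converts the vertical adjacency of $\pi_i$ and $\pi_{i+1}$ into precisely a column-weakly-increasing matching of $(i+1)$-cells into $i$-cells, and hence gives ballotness of $T$.

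The step I expect to be the main obstacle is the genomic bookkeeping in the middle paragraph: one must check that ``leftmost representative of family $i$ versus rightmost representative of family $i+1$'' is genuinely the extremal case once starred labels are present and once lower-edge labels — which live between two rows and must be slotted correctly into the right-to-left column reading order — are taken into account, and that the green and purple combination pieces attach only at the left ends of the family-$i$ runs, so that the interlacing inferred from the non-crossing of $\pi_i$ and $\pi_{i+1}$ is not disturbed by them. Once the extremal choice is identified, the non-crossing of the tracks does the real work, in the spirit of Tao's ``proof without words'' and Kreiman's equivariant refinement.
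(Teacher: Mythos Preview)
Your proposal has a concrete error. You assert that ``the cells and lower-edge slots carrying a fixed gene $i_j$ of $T$ form a contiguous run of columns inside a single row.'' This is false: a single gene can, and in the running example does, occupy several rows. In the tableau of Figure~\ref{fig:thetab}, the gene $1_2$ appears as a box label in row~$2$ and as a starred box label in row~$3$; the gene $2_2$ appears as a box label in row~$2$, as a lower-edge label of row~$2$, and again as a box label in row~$3$. This is exactly what the combination pieces (A.8) and (A.9) produce via (B.8) and (B.9): they move to the next row while \emph{repeating} the gene last used. Your ``extremal choice'' reduction (leftmost representative for family $i$, rightmost for family $i+1$) and the subsequent matching argument both rest on this single-row claim, so the argument does not go through as written.

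Separately, even granting a corrected structural description of genes, your third paragraph is only a sketch: ``translating this interlacing \dots\ into precisely a column-weakly-increasing matching'' is the whole content of the claim and would need to be made explicit.

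For comparison, the paper's proof avoids columns and matchings entirely. It stays on the puzzle side: for each $j$ it computes the \emph{height} of the $j$th upward black triangle on $\pi_{i+1}$ and of the $j$th {\tt startrow} piece on $\pi_i$; because the grammar of Proposition~\ref{claim:canonical} is row-indexed, each height equals $j$ plus the number of distinct genes of the relevant family placed through the corresponding row. The NWray correspondence of Claim~\ref{claim:AA} puts these two pieces on the same diagonal with the {\tt startrow} piece northwest, giving the height inequality $h'\ge h$ and hence the required gene-count inequality at every row break. This row-based bookkeeping matches the track structure directly and sidesteps the multi-row gene issue that trips up your column-based approach.
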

\begin{proof}
The {\bf height} of a (combination) piece is the distance of any northernmost point to the
$\nu$-boundary as measured along any (anti)diagonal.
The height $h$ of 
\begin{picture}(10,10)
\put(-5,-2){
\Scale[0.3]{
\begin{tikzpicture}[line cap=round,line join=round,>=triangle
45,x=1.0cm,y=1.0cm]
\clip(0.67,-0.37) rectangle (2.27,1.04);
\fill[color=black,fill=black,fill opacity=1.0] (1.5,0.87) -- (1,0) --
(2,0) -- cycle;
\end{tikzpicture}}}
\end{picture}$_{i+1,j}$ 
equals the number of 
\begin{picture}(10,10)
\put(-5,-1){
\Scale[0.3]{\begin{tikzpicture}[line cap=round,line join=round,>=triangle
45,x=1.0cm,y=1.0cm]
\clip(-0.21,-0.18) rectangle (1.72,1.04);
\fill[color=cccccc,fill=cccccc,fill opacity=1.0] (0,0) -- (0.5,0.87) --
(1.5,0.87) -- (1,0) -- cycle;
\end{tikzpicture}}}
\end{picture}\,'s,
\begin{picture}(10,10)
\put(-5,-2){
\Scale[0.3]{
\begin{tikzpicture}[line cap=round,line join=round,>=triangle
45,x=1.0cm,y=1.0cm]
\clip(0.67,-0.37) rectangle (2.27,1.04);
\fill[color=black,fill=black,fill opacity=1.0] (1.5,0.87) -- (1,0) --
(2,0) -- cycle;
\end{tikzpicture}}}
\end{picture}'s \
and
\begin{picture}(10,10)
\put(-5,-6){
\Scale[0.3]{\begin{tikzpicture}[line cap=round,line join=round,>=triangle 45,x=1.0cm,y=1.0cm]
\clip(1.12,-0.4) rectangle (2.76,2.1);
\fill[color=qqffqq,fill=qqffqq,fill opacity=1.0] (2,1.73) -- (1.5,0.87) -- (2,0) -- (2.5,0.87) -- cycle;
\draw [color=qqffqq] (2,1.73)-- (1.5,0.87);
\draw [color=qqffqq] (1.5,0.87)-- (2,0);
\draw [color=qqffqq] (2,0)-- (2.5,0.87);
\draw [color=qqffqq] (2.5,0.87)-- (2,1.73);
\end{tikzpicture}}}
\end{picture}'s
\ that appear weakly before 
\begin{picture}(10,10)
\put(-5,-2){
\Scale[0.3]{
\begin{tikzpicture}[line cap=round,line join=round,>=triangle
45,x=1.0cm,y=1.0cm]
\clip(0.67,-0.37) rectangle (2.27,1.04);
\fill[color=black,fill=black,fill opacity=1.0] (1.5,0.87) -- (1,0) --
(2,0) -- cycle;
\end{tikzpicture}}}
\end{picture}$_{i+1,j}$ in $\pi_{i+1}$ minus the number of 
\begin{picture}(10,10)
\put(-5,-2){
\Scale[0.3]{\begin{tikzpicture}[line cap=round,line join=round,>=triangle
45,x=1.0cm,y=1.0cm]
\clip(0.72,0) rectangle (2.19,1.92);
\fill[color=zzqqzz,fill=zzqqzz,fill opacity=1.0] (1,1.73) -- (2,1.73) --
(1.5,0.87) -- cycle;
\draw[color=zzqqzz] (1.5,0.87) -- (2, 0);
\end{tikzpicture}}}
\end{picture}'s
before \begin{picture}(10,10)
\put(-5,-2){
\Scale[0.3]{
\begin{tikzpicture}[line cap=round,line join=round,>=triangle
45,x=1.0cm,y=1.0cm]
\clip(0.67,-0.37) rectangle (2.27,1.04);
\fill[color=black,fill=black,fill opacity=1.0] (1.5,0.87) -- (1,0) --
(2,0) -- cycle;
\end{tikzpicture}}}
\end{picture}$_{i+1,j}$ in $\pi_{i+1}$.
There are exactly $j$ such 
\begin{picture}(10,10)
\put(-5,-2){
\Scale[0.3]{
\begin{tikzpicture}[line cap=round,line join=round,>=triangle
45,x=1.0cm,y=1.0cm]
\clip(0.67,-0.37) rectangle (2.27,1.04);
\fill[color=black,fill=black,fill opacity=1.0] (1.5,0.87) -- (1,0) --
(2,0) -- cycle;
\end{tikzpicture}}}
\end{picture}'s, while the number of 
\begin{picture}(10,10)
\put(-5,-1){
\Scale[0.3]{\begin{tikzpicture}[line cap=round,line join=round,>=triangle
45,x=1.0cm,y=1.0cm]
\clip(-0.21,-0.18) rectangle (1.72,1.04);
\fill[color=cccccc,fill=cccccc,fill opacity=1.0] (0,0) -- (0.5,0.87) --
(1.5,0.87) -- (1,0) -- cycle;
\end{tikzpicture}}}
\end{picture}\,'s and
\begin{picture}(10,10)
\put(-5,-6){
\Scale[0.3]{\begin{tikzpicture}[line cap=round,line join=round,>=triangle 45,x=1.0cm,y=1.0cm]
\clip(1.12,-0.4) rectangle (2.76,2.1);
\fill[color=qqffqq,fill=qqffqq,fill opacity=1.0] (2,1.73) -- (1.5,0.87) -- (2,0) -- (2.5,0.87) -- cycle;
\draw [color=qqffqq] (2,1.73)-- (1.5,0.87);
\draw [color=qqffqq] (1.5,0.87)-- (2,0);
\draw [color=qqffqq] (2,0)-- (2.5,0.87);
\draw [color=qqffqq] (2.5,0.87)-- (2,1.73);
\end{tikzpicture}}}
\end{picture}'s is the number of labels used and 
the number of 
\begin{picture}(10,10)
\put(-5,-2){
\Scale[0.3]{\begin{tikzpicture}[line cap=round,line join=round,>=triangle
45,x=1.0cm,y=1.0cm]
\clip(0.72,0) rectangle (2.19,1.92);
\fill[color=zzqqzz,fill=zzqqzz,fill opacity=1.0] (1,1.73) -- (2,1.73) --
(1.5,0.87) -- cycle;
\draw[color=zzqqzz] (1.5,0.87) -- (2, 0);
\end{tikzpicture}}}
\end{picture}'s is the number of these labels that are repeats. That is $h=j+(\#\text{distinct genes of family
$i+1$ in row $j+1$ and above})$ where we
do \emph{not} include labels on the lower
edges of row $j+1$. Similarly,
the height $h'$ of
${\mathbb S}_{i,j}$ is given by
$h'=j+(\#\text{distinct genes of family $i$ in row $j$ and above})$ where we include labels on the 
lower edges of row $j$.  By Claim~\ref{claim:AA}, $h'-h\geq 0$ and so ballotness follows.
\end{proof}

\subsection{Well-definedness of $\psi:{\mathcal T}\to {\mathcal P}$}
\label{sec:welldefpsi}

Let $T\in {\mathcal T}$ be a starred ballot genomic tableau of shape $\nu/\lambda$ and content $\mu$. Let $P=\psi(T)$. Let $\pi_i$ be the track associated to $\sigma_i$. As in Section~\ref{sec:welldefphi}, we define
the $\{0,1\}$-sequences ${\mathcal L}_i$ and ${\mathcal R}_i$ associated to 
$\pi_i$. Here, $T^{(i)}$ is \emph{defined} as the subtableau of $T$ using the labels of 
family $1,2,\ldots,i$. Hence $T^{(0)}$ is the empty tableau of shape $\lambda/\lambda$. Let $\nu^{(i)}$ be the outer shape of $T^{(i)}$.

\begin{claim}
[cf.~Claim~\ref{claim:C}]
\label{claim:CCCC}
${\mathcal L}_i=\nu^{(i-1)}$ and ${\mathcal R}_i=\nu^{(i)}$.
\end{claim}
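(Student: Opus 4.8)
The plan is to prove the two equalities simultaneously by induction on $i$, using the dictionary (B.1)--(B.9) in exactly the way Claim~\ref{claim:C} used it for $\phi$. The one genuinely new feature relative to the $\phi$-setting is that now $T^{(i)}$, and hence its outer shape $\nu^{(i)}$, is \emph{given} (as the subtableau of $T$ supported on families $1,\dots,i$), whereas the track $\pi_i$ is \emph{built} from the word $\sigma_i$; so the induction has to check that the construction $\psi$ reproduces $\nu^{(i)}$, rather than reading $\nu^{(i)}$ off a puzzle already known to be valid.

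First I would dispose of the base case and the passage between consecutive tracks. Since $T^{(0)}$ is the empty tableau of shape $\lambda/\lambda$, we have $\nu^{(0)}=\lambda$; and by construction $\pi_1$ is attached to the first $1$ on the $\nu$-boundary, to the left of which lie only $0$'s, so tracing $\{\rightarrow,\nearrow\}$ up the left side of $\pi_1$ returns precisely the lattice path of $\lambda$, i.e. $\mathcal{L}_1=\nu^{(0)}$. For the inductive step I would assume $\mathcal{R}_{i-1}=\nu^{(i-1)}$ (vacuously $\nu^{(0)}$ when $i=1$). The construction of $\psi$ attaches $\pi_i$ along the interface left open after $\pi_1,\dots,\pi_{i-1}$ together with their stacked NWrays and the white $0$-triangles; inspecting that construction (in particular, that the grey rhombi of each NWray of a black triangle of $\pi_i$ stack up-left until they meet a startrow piece of $\pi_{i-1}$ at its southmost $/$ edge, or the $\lambda$-boundary, as in the $\psi$-analog of Claim~\ref{claim:AA}) shows that this interface, read as a $\{0,1\}$-sequence, is exactly $\mathcal{R}_{i-1}$, whence $\mathcal{L}_i=\mathcal{R}_{i-1}=\nu^{(i-1)}$.

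The heart of the argument is then to deduce $\mathcal{R}_i=\nu^{(i)}$ from $\mathcal{L}_i=\nu^{(i-1)}$ by a secondary induction on the letters of $\sigma_i$. Reading $\sigma_i$ from southwest to northeast, and correspondingly the family-$i$ entries of $T$ from right to left and from row $i$ downward as in Lemma~\ref{lemma:sigmadecomp}, I claim that after $p$ steps the right boundary of the partially built track traces the outer shape of $T^{(i-1)}$ enlarged by the family-$i$ \emph{box} labels placed so far (edge labels and $\star$'s do not affect the outer shape and are ignored here). The base $p=0$ is just $\mathcal{L}_i=\nu^{(i-1)}$. For the inductive step one checks each of (B.1)--(B.9): a box piece (A.1) adds one box to the current row and (A.2) makes the northeast turn that ends the row; an edge piece (A.3), (A.4) or (A.5) moves left along the current row past columns carrying at most one lower edge label of the appropriate gene; and a startrow piece (A.6)--(A.9) drops to the next row, placing there at most one box label, possibly $\star$-ed. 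In every case the effect on the traced lattice path matches the change in the outer shape of the growing subtableau — this is the same inspection already carried out in the proof of Claim~\ref{claim:C}. After the last letter the boxes added are precisely those of family $i$, so the right boundary traces $\nu^{(i)}$, giving $\mathcal{R}_i=\nu^{(i)}$ and feeding the outer induction. The equality $\mathcal{R}_i=\mathcal{L}_{i+1}$ needed to continue is supplied by the argument of the previous paragraph applied at index $i+1$.

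I expect the main obstacle to be the bookkeeping at the startrow pieces (A.6)--(A.9): one must verify that when $\sigma_i$ drops a row, the boundary of the growing track ``jogs'' between those two rows exactly as the boundary of $\nu^{(i-1)}\cup(\text{placed family-}i\text{ boxes})$ does, and in particular that the grey rhombi stacked in the NWray of each black triangle of $\pi_i$ terminate precisely at the startrow pieces of $\pi_{i-1}$ (this is where the hypothesis $\mathcal{L}_i=\nu^{(i-1)}$, together with the $\psi$-analog of Claim~\ref{claim:AA}, is used), so that no white $0$-triangle is forced into a cell that ought to hold a box. Once this compatibility of the NWrays with the tableau rows is established, the remaining verifications are the same routine case-by-case inspection of (B.1)--(B.9) as in the proof of Claim~\ref{claim:C}.
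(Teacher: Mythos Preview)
Your proposal is more elaborate than the paper's proof, which is the single sentence ``By inspection of the correspondences (B.1)--(B.9).'' That brevity is not laziness: the point is that each track $\pi_i$ is laid down \emph{independently}, starting at the $i$th $1$ on the $\nu$-boundary and following the word $\sigma_i$. Hence both $\mathcal{L}_i$ and $\mathcal{R}_i$ are determined by $\sigma_i$ (and the starting position) alone, with no reference whatsoever to $\pi_{i-1}$. Your ``secondary induction'' on the letters of $\sigma_i$---checking piece by piece how each of (B.1)--(B.9) affects the traced lattice path---is exactly this inspection, and it applies symmetrically to the \emph{left} boundary just as well as to the right one. That already gives $\mathcal{L}_i=\nu^{(i-1)}$ directly, with no outer induction on $i$.

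Your outer induction, by contrast, introduces a genuine circularity. To pass from $\mathcal{R}_{i-1}=\nu^{(i-1)}$ to $\mathcal{L}_i=\nu^{(i-1)}$ you invoke the equality $\mathcal{L}_i=\mathcal{R}_{i-1}$ via ``the $\psi$-analog of Claim~\ref{claim:AA}'' (that NWrays of the $\blacktriangle$'s in $\pi_i$ terminate at {\tt startrow} pieces of $\pi_{i-1}$). But in this section that statement is Claim~\ref{claim:E'}, whose proof \emph{uses} Claim~\ref{claim:CCCC}: one needs $\mathcal{L}_i=\nu^{(i-1)}=\mathcal{R}_{i-1}$ to know the $1$'s in the two sequences sit on the same diagonals. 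So you are assuming what you set out to prove. The paper avoids this by first establishing Claim~\ref{claim:CCCC} directly (no interaction between tracks needed), and only afterwards combining it with Claim~\ref{claim:E'} to conclude that the $\pi_i$ are non-intersecting and fit together. Drop the outer induction, and your letter-by-letter analysis already does the whole job for both $\mathcal{L}_i$ and $\mathcal{R}_i$.
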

\begin{proof}
By inspection of the correspondences (B.1)-(B.9). 
\end{proof}

By the lattice path definition, each $\nu^{(j)}$ is a length $n$ sequence. So $\pi_i$ is a track that (by definition) starts at the south border
of $\Delta$ and terminates at the east border of $\Delta$. 
Also, define 
\begin{picture}(10,10)
\put(-5,-2){
\Scale[0.3]{
\begin{tikzpicture}[line cap=round,line join=round,>=triangle
45,x=1.0cm,y=1.0cm]
\clip(0.67,-0.37) rectangle (2.27,1.04);
\fill[color=black,fill=black,fill opacity=1.0] (1.5,0.87) -- (1,0) --
(2,0) -- cycle;
\end{tikzpicture}}}
\end{picture}$_{i,j}$ and ${\mathbb S}_{i,j}$ as before. 

\begin{claim}
\label{claim:E'}
${\mathbb S}_{i,j}$ and
\begin{picture}(10,10)
\put(-5,-2){
\Scale[0.3]{
\begin{tikzpicture}[line cap=round,line join=round,>=triangle
45,x=1.0cm,y=1.0cm]
\clip(0.67,-0.37) rectangle (2.27,1.04);
\fill[color=black,fill=black,fill opacity=1.0] (1.5,0.87) -- (1,0) --
(2,0) -- cycle;
\end{tikzpicture}}}
\end{picture}$_{i+1,j}$ share a 
diagonal with the former strictly northwest of the latter.
\end{claim}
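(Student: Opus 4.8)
The plan is to pin down where $\mathbb{S}_{i,j}$ and the black triangle $\blacktriangle_{i+1,j}$ sit inside $\Delta_{\lambda,\mu,\nu}$ directly from the tableau $T$, and then read off that both meet a common line parallel to the right side of $\Delta$, with $\mathbb{S}_{i,j}$ the more northwestern of the two. This is the $\psi$-analogue of the ``ragged honeycomb'' Claim~\ref{claim:AA}, and it is exactly the geometric input ensuring that the grey rhombi stacked northwest of $\blacktriangle_{i+1,j}$ in the construction of $\psi$ will reach $\mathbb{S}_{i,j}$.

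First I would reduce to a single row. By Lemma~\ref{lemma:sigmadecomp} each $\sigma_i$ has the form~(\ref{eqn:canonicalform}); reading off the dictionaries (B.1)--(B.9), the $j$th {\tt startrow} letter of $\sigma_i$ is emitted exactly when the scan of family $i$ in $T$ descends from row $i+j-1$ to row $i+j$, while the $j$th black triangle of $\sigma_{i+1}$ is emitted exactly when the scan of family $i+1$ finishes the box labels lying in row $i+j$ (using the convention after Lemma~\ref{lemma:sigmadecomp} when a row carries no such box labels). Thus $\mathbb{S}_{i,j}$ and $\blacktriangle_{i+1,j}$ are attached to their respective tracks at the level of row $i+j$, and it suffices to compare them there.

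Next I would compute, for each of the two pieces, which line parallel to the right side of $\Delta$ passes through its distinguished vertex -- the apex for $\blacktriangle_{i+1,j}$, the south-then-eastmost vertex for $\mathbb{S}_{i,j}$. The point is that the $\{\rightarrow,\nearrow\}$-lattice path tracing $\mathcal{L}_{i+1}$ crosses these lines one at a time, and by Claim~\ref{claim:CCCC} that path is $\nu^{(i)}\,(=\mathcal{R}_i)$; so, translating through the lattice-path dictionary for Young diagrams, both vertices lie on the line meeting the $\nu$-boundary at the unit segment marked by the $1$ of $\nu^{(i)}$ just to the left of the leftmost family-$(i+1)$ box label of row $i+j$. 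Quantitatively this is the bookkeeping already done in the proof of Claim~\ref{claim:ballot}: travelling along $\pi_{i+1}$, the apex of $\blacktriangle_{i+1,j}$ is reached at height $h=j+\#\{\text{distinct genes of family }i+1\text{ in rows}\le i+j\}$, not counting labels on the lower edges of row $i+j$; travelling along $\pi_i$, the south-then-eastmost vertex of $\mathbb{S}_{i,j}$ is reached at height $h'=j+\#\{\text{distinct genes of family }i\text{ in rows}\le i+j\}$, now counting labels on the lower edges of row $i+j$.

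Finally I would conclude. By the previous paragraph the two distinguished vertices lie on the same line, so $\mathbb{S}_{i,j}$ and $\blacktriangle_{i+1,j}$ share a diagonal; and $h'-h=\#\{\text{distinct genes of family }i\text{ in rows}\le i+j\}-\#\{\text{distinct genes of family }i+1\text{ in rows}\le i+j\}\ge 0$ because $T$ is ballot, so $\mathbb{S}_{i,j}$ lies weakly above $\blacktriangle_{i+1,j}$ along that line. Since the two pieces are distinct and lie on the nested, non-crossing tracks $\pi_i$ and $\pi_{i+1}$ (nestedness coming from Claim~\ref{claim:CCCC}), $\mathbb{S}_{i,j}$ is in fact strictly to the northwest, and the grey rhombi of the NWray -- there are $h'-h$ of them -- carry $\blacktriangle_{i+1,j}$ up to $\mathbb{S}_{i,j}$. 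I expect the main obstacle to be the middle step: choosing diagonal coordinates so that the four {\tt startrow} pieces and the degenerate empty-row case are all handled uniformly, and checking that this count matches on the nose the one in the proof of Claim~\ref{claim:ballot}.
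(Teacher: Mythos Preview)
Your proposal is correct and follows essentially the same approach as the paper's own proof: the shared diagonal comes from Claim~\ref{claim:CCCC} (since the $1$'s in $\mathcal{L}_{i+1}$ arise exactly from the black triangles of $\pi_{i+1}$, the $1$'s in $\mathcal{R}_i$ arise exactly from the $\mathbb{S}$ pieces of $\pi_i$, and $\mathcal{L}_{i+1}=\mathcal{R}_i=\nu^{(i)}$), and the ``northwest'' assertion comes from reversing the logic of the last sentence of Claim~\ref{claim:ballot}'s proof, using that $T$ is ballot to obtain $h'\ge h$. The paper compresses all of this into three sentences; your row-by-row reduction and explicit diagonal identification are more detailed than necessary but not wrong. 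One caution: your final appeal to ``nested, non-crossing tracks'' to upgrade weakly northwest to strictly northwest is slightly circular, since the paper uses Claim~\ref{claim:E'} itself (together with Claim~\ref{claim:CCCC}) to \emph{deduce} non-intersection of the $\pi_i$; however this is harmless, because $h'\ge h$ already places the south-then-eastmost $/$ edge of $\mathbb{S}_{i,j}$ weakly above the $/$ edge of the black triangle on that diagonal, which is precisely the ``strictly northwest'' relation intended (an empty NWray when $h'=h$).
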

\begin{proof}
The $1$'s in ${\mathcal L}_{i+1}$ result solely from
the \begin{picture}(10,10)
\put(-5,-2){
\Scale[0.3]{
\begin{tikzpicture}[line cap=round,line join=round,>=triangle
45,x=1.0cm,y=1.0cm]
\clip(0.67,-0.37) rectangle (2.27,1.04);
\fill[color=black,fill=black,fill opacity=1.0] (1.5,0.87) -- (1,0) --
(2,0) -- cycle;
\end{tikzpicture}}}
\end{picture}'s
 appearing in $\pi_{i+1}$ while the $1$'s appearing in 
${\mathcal R}_i$ result solely from the ${\mathbb S}$ (combination) pieces.
Thus, that the pieces share a diagonal
follows from Claim~\ref{claim:CCCC}. For the ``northwest'' assertion, repeat
Claim~\ref{claim:ballot}'s argument but reverse the logic of the final sentence: since
by assumption $T$ is ballot, it follows that $h'\geq h$. 
\end{proof}

Since Claims~\ref{claim:CCCC} and~\ref{claim:E'} combine to imply that the $\pi_i$ are non-intersecting, attaching NWrays to each \begin{picture}(10,10)
\put(-5,-2){
\Scale[0.3]{
\begin{tikzpicture}[line cap=round,line join=round,>=triangle
45,x=1.0cm,y=1.0cm]
\clip(0.67,-0.37) rectangle (2.27,1.04);
\fill[color=black,fill=black,fill opacity=1.0] (1.5,0.87) -- (1,0) --
(2,0) -- cycle;
\end{tikzpicture}}}
\end{picture}
and filling with white $0$-triangles as prescribed, we have a filling $P$ of $\Delta_{{\widetilde \lambda},{\widetilde \mu},\nu}$ satisfying the modified KV-puzzle rule. It remains to check the $\lambda$- and $\mu$-boundaries.

\begin{claim}
${\widetilde \lambda}=\lambda$.
\end{claim}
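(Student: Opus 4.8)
The plan is to reduce the claim to the identity $\mathcal{L}_1 = {\widetilde \lambda}$, i.e.\ to the statement that the left sequence of the bottom track $\pi_1$ of $P$ records exactly the labels on the $\lambda$-side of $\Delta_{{\widetilde \lambda},{\widetilde \mu},\nu}$. Granting this, Claim~\ref{claim:CCCC} (with $i=1$) gives $\mathcal{L}_1 = \nu^{(0)}$, and $\nu^{(0)}$ is the outer shape of the empty tableau $T^{(0)}$ of shape $\lambda/\lambda$, so $\nu^{(0)} = \lambda$; hence ${\widetilde \lambda} = \lambda$. Thus everything comes down to the geometric identity $\mathcal{L}_1 = {\widetilde \lambda}$, which is the $\psi$-side counterpart of the ``graphically clear'' assertion $\mathcal{L}_1 = \lambda$ used in Claim~\ref{claim:B}.

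To prove it, I would first record a $\psi$-side analogue of Lemma~\ref{lemma:NWray}: since $\pi_1$ is filled first and nothing is placed strictly northwest of $\pi_1$ except the grey rhombi of the NWrays of its black triangles and the white $0$-triangles completing the figure, no piece of {\tt startrow} occurs strictly northwest of $\pi_1$. The north $/$ edge of each grey rhombus of a NWray is labeled $1$, and by inspection the only pieces that can abut such an edge are another grey rhombus or a {\tt startrow} piece; the latter being excluded, every NWray emanating from a black triangle of $\pi_1$ runs all the way to the $\lambda$-boundary, terminating in a $/$ edge labeled $1$.

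With this in hand, I would compare $\mathcal{L}_1$ and ${\widetilde \lambda}$ edge by edge along the $n$ diagonal strips of $P$ parallel to the $\lambda$-side. On each strip the region lying strictly between $\partial\pi_1$ and $\partial\Delta$ is a possibly empty run of white $0$-triangles, capped at its $\lambda$-side end by at most one rhombus of a NWray; reading up $\partial\pi_1$ one records a $1$ exactly at the $/$ edge from which some NWray departs, reading up the $\lambda$-side of $\Delta$ one records a $1$ exactly at the $/$ edge at which that same NWray arrives, the white triangles contribute only $0$'s on both sides, and on the strips carrying no NWray the two boundary edges coincide. Because the tracks are non-intersecting (Claims~\ref{claim:CCCC} and~\ref{claim:E'}) and the NWrays fill precisely the intervening region, this matches the $1$'s of $\mathcal{L}_1$ with the $1$'s of ${\widetilde \lambda}$ in the same positions, giving $\mathcal{L}_1 = {\widetilde \lambda}$. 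The one point requiring care is exactly this last bookkeeping step --- the $\psi$-side analogue of Claim~\ref{claim:AA} --- namely that the NWrays transport each $1$ of $\partial\pi_1$ to the correct slot of ${\widetilde \lambda}$ with no shift or collision; this is the ``ragged honeycomb'' structure visible in Figure~\ref{fig:bigex}, and it is forced once non-intersection of the $\pi_i$ and the termination statement above are established.
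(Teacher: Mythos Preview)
Your proposal is correct and follows exactly the paper's approach: the paper's proof is the two-liner ``Graphically, ${\widetilde \lambda}={\mathcal L}_1$; by Claim~\ref{claim:CCCC}, ${\mathcal L}_1=\lambda$,'' and you have simply unpacked the word ``graphically'' into a careful diagonal-strip argument about NWrays and white $0$-triangles. The extra justification you supply for ${\widetilde \lambda}={\mathcal L}_1$ is sound but not required at the level of detail the paper works at.
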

\begin{proof}
Graphically, ${\widetilde \lambda}={\mathcal L}_1$. On the other hand, 
by Claim~\ref{claim:CCCC}, we know that ${\mathcal L}_1=\lambda$. 
\end{proof}

\begin{claim}
${\widetilde \mu}=\mu$.
\end{claim}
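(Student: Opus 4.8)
The plan is to run the argument of Claim~\ref{claim:D} in reverse. There the $\mu$-boundary of $\Delta$ was prescribed and one concluded that $\phi(P)$ has content $\mu$; here $T$ is \emph{assumed} to have content $\mu$, and we must conclude that the east boundary of $P=\psi(T)$ reads off the $\{0,1\}$-sequence $\mu$.

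First I would reduce the statement to a claim about where the tracks terminate. Just as in the ``$n$ diagonal strips'' observation preceding Claim~\ref{claim:B}, the east boundary of $P$ is completely determined by the heights $h_1,\dots,h_k$ at which the (non-intersecting) tracks $\pi_1,\dots,\pi_k$ meet the east side of $\Delta$: the remaining boundary segments are forced and carry $0$'s, since they bound only white triangles. Concretely, ${\widetilde\mu}_i$ is the value satisfying $(k-i)+{\widetilde\mu}_i=h_i$, so it is enough to show $h_i=(k-i)+\mu_i$ for every $i$.

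Next I would compute $h_i$ from $\sigma_i$. By Lemma~\ref{lemma:sigmadecomp}, $\sigma_i$ has the canonical form~(\ref{eqn:canonicalform}); summing the vertical displacements contributed by the (combination) pieces along $\pi_i$ --- exactly the bookkeeping already carried out in Claim~\ref{claim:D} --- gives $h_i=\beta_i+\#\{\text{black triangles in }\pi_i\}$, where $\beta_i$ denotes the number of distinct genes of family $i$ occurring in $T$. Since $T$ has content $\mu$ we have $\beta_i=\mu_i$. For the black-triangle count I would appeal to Claims~\ref{claim:CCCC} and~\ref{claim:E'}: these exhibit the tracks as non-intersecting ``ragged honeycombs'', with the NWray of each black triangle of $\pi_{i+1}$ terminating on an ${\mathbb S}$-piece of $\pi_i$, and the NWray of each black triangle of $\pi_1$ terminating at a $1$ of the $\lambda$-boundary (which has $k$ ones, since ${\widetilde\lambda}=\lambda$ was established above). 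Chaining these correspondences down from $\pi_1$ pins down the count and yields $h_i=(k-i)+\mu_i$, hence ${\widetilde\mu}=\mu$.

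The step I expect to be the main obstacle is the height bookkeeping: verifying, piece by piece across all nine combination pieces --- and handling the degenerate convention in (B.6) for rows with no labels of family $i$ --- that each contributes the claimed increment, and getting the off-by-one in the chain from $\pi_1$ down to $\pi_i$ exactly right. Since this is the identical computation to the one underlying Claim~\ref{claim:D}, in the write-up it should be possible to quote it rather than repeat it; once it is in hand, the remaining deductions are immediate from Claims~\ref{claim:CCCC} and~\ref{claim:E'} together with ${\widetilde\lambda}=\lambda$.
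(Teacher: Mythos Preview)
Your proposal is correct and is precisely the approach the paper takes: the paper's entire proof is the one-sentence remark that one reverses the logic of Claim~\ref{claim:D}, using the known content of $T$ to determine the heights $h_i$ of the tracks $\pi_i$. Your write-up simply unpacks that sentence, correctly invoking Claims~\ref{claim:CCCC} and~\ref{claim:E'} (the $\psi$-side analogues of Claims~\ref{claim:B}, \ref{claim:C}, and~\ref{claim:AA}) together with ${\widetilde\lambda}=\lambda$ to obtain the black-triangle count, and correctly flagging the height bookkeeping as the only delicate point.
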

\begin{proof}
This is given by reversing the logic of the proof of Claim~\ref{claim:D};
here we are given the content of $T$ and are determining the
heights of the tracks $\pi_i$.
\end{proof}

\subsection{Weight-preservation}
\label{sec:weight_pres}
We wish to show:
\begin{claim}
$\phi$ is weight-preserving, i.e., ${\rm wt}(P)={\widehat{\rm wt}}(T)$.
\end{claim}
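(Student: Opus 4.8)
The plan is to write each of ${\rm wt}(P)$ and ${\widehat{\rm wt}}(T)$ as a sign times a product of binomials $1-t_a/t_b$, and to match the two factorizations separately. Since the only pieces of $P$ carrying nontrivial weight are the KV-pieces (contributing $-1$ each) and the green equivariant pieces (each contributing one binomial), we have ${\rm wt}(P)=(-1)^{\#\{\text{KV-pieces}\}}\cdot\prod_{g}\bigl(1-\tfrac{t_{a_g}}{t_{b_g}}\bigr)$, the product over the green pieces $g$ of $P$, where $a_g$ (resp.\ $b_g$) is the segment of the $\nu$-boundary, counted from the right, met by the $\searrow$-arrow (resp.\ $\swarrow$-arrow) from the center of $g$. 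Meanwhile ${\widehat{\rm wt}}(T)=(-1)^{{\hat d}(T)}\cdot\prod_\ell {\tt edgefactor}(\ell)\cdot\prod_{\sf x}{\tt starfactor}({\sf x})$.

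First I would settle the sign. Using the dictionary (B.1)--(B.9): every box label of $T$ is placed by a piece of one of the types (A.1), (A.7), (A.8), (A.9); every edge label by (A.4) or (A.5); every $\star$ by (A.7) or (A.9); and, by the counting in the proof of Claim~\ref{claim:D} together with Claim~\ref{claim:AA}, one has $|\mu|=\#(A.1)+\#(A.4)+\#(A.7)$ (the total number of ``new gene'' pieces). Substituting these into ${\hat d}(T)=\#\{\text{labels}\}+\#\{\star\text{'s}\}-|\mu|$ collapses it to $\#(A.5)+\#(A.7)+\#(A.8)+2\,\#(A.9)$. On the puzzle side, each KV-piece (yellow or purple) lies inside exactly one combination piece: one purple inside each (A.5), one yellow inside each (A.7), one purple inside each (A.8), one purple and one yellow inside each (A.9); hence $\#\{\text{KV-pieces}\}=\#(A.5)+\#(A.7)+\#(A.8)+2\,\#(A.9)$ as well, so $(-1)^{{\hat d}(T)}=(-1)^{\#\{\text{KV-pieces}\}}$.

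For the binomials, each green rhombus of $P$ occurs inside exactly one of (A.4), (A.5), (A.7), (A.9), so $\phi$ restricts to a bijection between green pieces of $P$ and (edge labels of $T$) $\sqcup$ (starred box labels of $T$), with (A.4), (A.5) producing edge labels and (A.7), (A.9) producing starred box labels. It then suffices to prove the following index identification: if the green piece $g$ corresponds to the edge label $\ell=i_j$ on the south edge of the box ${\sf x}$ in row $r$, then $a_g={\sf Man}({\sf x})$ and $b_g=r-i+\mu_i-j+1+{\sf Man}({\sf x})$, so that $1-t_{a_g}/t_{b_g}={\tt edgefactor}(\ell)$; while if $g$ corresponds to a starred label $i_j^\star$ in the box ${\sf x}$ in row $r$, then $a_g={\sf Man}({\sf x})+1$ and $b_g=r-i+\mu_i-j+1+{\sf Man}({\sf x})$, so that $1-t_{a_g}/t_{b_g}={\tt starfactor}({\sf x})$. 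To prove this I would parametrize a point of $\Delta_{\lambda,\mu,\nu}$ by its lattice distances $(x,y,z)$ to the $\nu$-, $\mu$-, and $\lambda$-sides (so $x+y+z=n$): the $\searrow$-arrow from $g$ keeps $y$ fixed and the $\swarrow$-arrow keeps $z$ fixed, whence $a_g$ is read off from $y_g$, $b_g$ from $z_g$, and, since $x+y+z=n$, the difference $b_g-a_g$ equals the lattice height of $g$ above the $\nu$-boundary up to a fixed offset independent of $g$. The identity for $a_g$ follows because the $\searrow$-diagonals of $P$ are, via the $\{0,1\}$-sequence/lattice-path dictionary and Claims~\ref{claim:B}, \ref{claim:C}, \ref{claim:CCCC} (which give ${\mathcal L}_i=\nu^{(i-1)}$, ${\mathcal R}_i=\nu^{(i)}$), in order-preserving correspondence with the antidiagonals of $k\times(n-k)$: the $\searrow$-arrow from $g$ meets the $\nu$-boundary at the segment recording the ${\sf Man}$-value of the south edge of ${\sf x}$ (for an edge label), respectively of ${\sf x}$ itself, the latter sitting one diagonal farther right because the combination pieces (A.7), (A.9) open with a ``go to the next row'' move. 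The identity for $b_g$ then reduces, via $b_g-a_g=\text{height}(g)+\text{const}$, to computing $\text{height}(g)$; this is exactly the computation in the proof of Claim~\ref{claim:ballot} (carried out there for black triangles and for the pieces ${\mathbb S}_{i,j}$), applied instead to the green rhombus, which yields $b_g=r-i+\mu_i-j+1+{\sf Man}({\sf x})$ uniformly.

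The main obstacle will be precisely this last coordinate bookkeeping: calibrating every off-by-one so that the height of each green rhombus produces the prescribed linear expression in $r,i,j,\mu_i$ across all four piece-types (A.4), (A.5), (A.7), (A.9) --- in particular tracking the ``$+1$'' in the numerator that distinguishes starred-box pieces from edge pieces, and the ``new gene'' versus ``old gene'' distinction inside (A.5), (A.8), (A.9). Everything else is essentially a repackaging of counts already established in the proofs of Claims~\ref{claim:AA}, \ref{claim:D}, and~\ref{claim:ballot}; indeed the puzzles $P_2,P_3,P_5,P_6$ and the tableaux $T_2,T_3,T_5,T_6$ of Section~\ref{sec:puzzle_conjecture} and Example~\ref{exa:fourtableaux} already display all cases of the index identification (e.g.\ for the green piece of $P_3$, $a=3={\sf Man}({\sf x})$ and $b=4=r-i+\mu_i-j+1+{\sf Man}({\sf x})$ with $r=i=j=1$, $\mu_1=1$; for that of $P_5$, $a=2={\sf Man}({\sf x})+1$ and $b=3$ with $r=2$).
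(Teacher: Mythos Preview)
Your proposal is correct and follows essentially the same approach as the paper: match the sign by observing that each KV-piece corresponds to a gene repetition or a $\star$ (you unpack this more explicitly via a piece-type count, the paper states it in one line), and match each equivariant binomial by reading $a$ off the $\searrow$-diagonal via the ${\sf Man}$-correspondence of Claims~\ref{claim:B}--\ref{claim:C} and then obtaining $b$ from $a$ and the height of the green piece, computed exactly as in the proof of Claim~\ref{claim:ballot}. The paper's write-up is more compressed but the content and the index bookkeeping are the same.
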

\begin{proof}
The $\pm 1$ sign associated to $P$ and $T$ is the same
since each usage of a KV-piece in $P$ corresponds to a $\star$-ed label or a repetition of a
gene in $T$.

Now consider the weight $1-\frac{t_a}{t_b}$ assigned to an equivariant piece $p$ in $P$. Here $a$ is the ordinal (counted from the right) of the line segment $s$ on the $\nu$-boundary hit by the diagonal ``right leg'' emanating from $p$. Then $b$ equals $a+h-1$ where $h$ is the height of the piece $p$.
Suppose $p$ lies in track $\pi_i$, and corresponds either to $i_j$ on the lower edge of box $\x$ in row $r$ or to $i_j^\star \in \x$ in row $r$. Consider the edge $e$ on the left boundary of $\pi_i$ that is on the same diagonal as $s$. If $p$ is not attached to the first KV-piece, so it corresponds to an edge label, then $e$'s index from the right in the string $\mathcal{L}_i$ equals ${\sf Man}(\x)$. Otherwise $e$'s index from the right in the string $\mathcal{L}_i$ equals ${\sf Man}(\x) + 1$.

Note that $h$ equals the number of
\begin{picture}(10,10)
\put(-5,-1){
\Scale[0.3]{\begin{tikzpicture}[line cap=round,line join=round,>=triangle
45,x=1.0cm,y=1.0cm]
\clip(-0.21,-0.18) rectangle (1.72,1.04);
\fill[color=cccccc,fill=cccccc,fill opacity=1.0] (0,0) -- (0.5,0.87) --
(1.5,0.87) -- (1,0) -- cycle;
\end{tikzpicture}}}
\end{picture}\,'s,
\begin{picture}(10,10)
\put(-5,-2){
\Scale[0.3]{
\begin{tikzpicture}[line cap=round,line join=round,>=triangle
45,x=1.0cm,y=1.0cm]
\clip(0.67,-0.37) rectangle (2.27,1.04);
\fill[color=black,fill=black,fill opacity=1.0] (1.5,0.87) -- (1,0) --
(2,0) -- cycle;
\end{tikzpicture}}}
\end{picture}'s and
\begin{picture}(10,10)
\put(-5,-6){
\Scale[0.3]{\begin{tikzpicture}[line cap=round,line join=round,>=triangle 45,x=1.0cm,y=1.0cm]
\clip(1.12,-0.4) rectangle (2.76,2.1);
\fill[color=qqffqq,fill=qqffqq,fill opacity=1.0] (2,1.73) -- (1.5,0.87) -- (2,0) -- (2.5,0.87) -- cycle;
\draw [color=qqffqq] (2,1.73)-- (1.5,0.87);
\draw [color=qqffqq] (1.5,0.87)-- (2,0);
\draw [color=qqffqq] (2,0)-- (2.5,0.87);
\draw [color=qqffqq] (2.5,0.87)-- (2,1.73);
\end{tikzpicture}}}
\end{picture}'s appearing weakly before
$p$ in $\pi_i$ minus the number of \begin{picture}(10,10)
\put(-5,-2){
\Scale[0.3]{\begin{tikzpicture}[line cap=round,line join=round,>=triangle
45,x=1.0cm,y=1.0cm]
\clip(0.72,0) rectangle (2.19,1.92);
\fill[color=zzqqzz,fill=zzqqzz,fill opacity=1.0] (1,1.73) -- (2,1.73) --
(1.5,0.87) -- cycle;
\draw[color=zzqqzz] (1.5,0.87) -- (2, 0);
\end{tikzpicture}}}
\end{picture}'s appearing before $p$ in $\pi_i$. The number of such
\begin{picture}(10,10)
\put(-5,-2){
\Scale[0.3]{
\begin{tikzpicture}[line cap=round,line join=round,>=triangle
45,x=1.0cm,y=1.0cm]
\clip(0.67,-0.37) rectangle (2.27,1.04);
\fill[color=black,fill=black,fill opacity=1.0] (1.5,0.87) -- (1,0) --
(2,0) -- cycle;
\end{tikzpicture}}}
\end{picture}'s equals $1 + r-i$ if $p$ corresponds to an edge label and equals $r-i$ if $p$ corresponds to a starred label. The number of such \begin{picture}(10,10)
\put(-5,-1){
\Scale[0.3]{\begin{tikzpicture}[line cap=round,line join=round,>=triangle
45,x=1.0cm,y=1.0cm]
\clip(-0.21,-0.18) rectangle (1.72,1.04);
\fill[color=cccccc,fill=cccccc,fill opacity=1.0] (0,0) -- (0.5,0.87) --
(1.5,0.87) -- (1,0) -- cycle;
\end{tikzpicture}}}
\end{picture}\,'s and \begin{picture}(10,10)
\put(-5,-6){
\Scale[0.3]{\begin{tikzpicture}[line cap=round,line join=round,>=triangle 45,x=1.0cm,y=1.0cm]
\clip(1.12,-0.4) rectangle (2.76,2.1);
\fill[color=qqffqq,fill=qqffqq,fill opacity=1.0] (2,1.73) -- (1.5,0.87) -- (2,0) -- (2.5,0.87) -- cycle;
\draw [color=qqffqq] (2,1.73)-- (1.5,0.87);
\draw [color=qqffqq] (1.5,0.87)-- (2,0);
\draw [color=qqffqq] (2,0)-- (2.5,0.87);
\draw [color=qqffqq] (2.5,0.87)-- (2,1.73);
\end{tikzpicture}}}
\end{picture}'s minus the number of such \begin{picture}(10,10)
\put(-5,-2){
\Scale[0.3]{\begin{tikzpicture}[line cap=round,line join=round,>=triangle
45,x=1.0cm,y=1.0cm]
\clip(0.72,0) rectangle (2.19,1.92);
\fill[color=zzqqzz,fill=zzqqzz,fill opacity=1.0] (1,1.73) -- (2,1.73) --
(1.5,0.87) -- cycle;
\draw[color=zzqqzz] (1.5,0.87) -- (2, 0);
\end{tikzpicture}}}
\end{picture}'s equals $\mu_i -j + 1$. 
Weight preservation follows.
\end{proof}

\section*{Acknowledgments}
We thank Hugh Thomas for enlightening conversations during our work on \cite{PeYo} and also thank Ravi
Vakil for helpful correspondence.
OP was supported by an Illinois Distinguished Fellowship, an NSF Graduate Research Fellowship and NSF MCTP
grant DMS 0838434.
AY was supported by NSF grants.


\begin{thebibliography}{9999999999}
\bibitem[AnGrMi11]{Anderson.Griffeth.Miller} D.~Anderson, S.~Griffeth and
E.~Miller, \emph{Positivity and Kleiman transversality in equivariant K-theory of homogeneous spaces},
J.~Eur. Math. Soc., {\bf 13} (2011), 57--84.
\bibitem[Bu15]{Buch:quantum} A.~Buch, \emph{Mutations of puzzles and equivariant cohomology of two-step flag varieties}, Ann.\ of Math.~{\bf 182} (2015), 173--220.
\bibitem[BKPT13]{Buch.Kresch.Purbhoo.Tamvakis} A.~Buch, A.~Kresch, K.~Purbhoo and H.~Tamvakis, \emph{The puzzle conjecture for the cohomology of two-step flag manifolds}, preprint, 2013. \textsf{arXiv:1401.1725}
\bibitem[BuKrTa03]{Buch.Kresch.Tamvakis} A.~Buch, A.~Kresch and H.~Tamvakis,
\emph{Gromov-Witten invariants on Grassmannians}, J.~Amer.~Math.~Soc., 
{\bf 16} (2003), 901--915.
\bibitem[CoVa05]{Coskun.Vakil} I.~Co\c{s}kun and R.~Vakil,
\emph{Geometric positivity in the cohomology of homogeneous spaces and generalized Schubert calculus},
in ``Algebraic Geometry --- Seattle 2005'' Part 1, 77--124, Proc. Sympos. Pure Math., 80, Amer. Math. Soc., Providence, RI, 2009.
\bibitem[FuLa94]{fulton.lascoux} W.~Fulton and A.~Lascoux, \emph{A Pieri formula in the Grothendieck ring of a flag bundle}, Duke Math. J., {\bf 76}(3) (1994), 711--729.
\bibitem[Kn10]{Knutson:positroid} A.~Knutson, \emph{Puzzles, positroid varieties, and equivariant
$K$-theory of Grassmannians}, preprint, 2010. \textsf{arXiv:1008.4302}
\bibitem[KnPu11]{Knutson.Purbhoo} A.~Knutson and K.~Purbhoo, \emph{Product and puzzle formulae for $GL(n)$ Belkale-Kumar coefficients}, Electron. J.~Combin., {\bf 18}(1) (2011), P76.
\bibitem[KnTa03]{Knutson.Tao} A.~Knutson and T.~Tao, \emph{Puzzles and
(equivariant) cohomology of Grassmannians}, Duke~Math.~J. {\bf 119}(2) (2003), 221--260.
\bibitem[KnTaWo04]{KTW} A.~Knutson, T.~Tao and C.~Woodward, \emph{The honeycomb model of $GL_n(\complexes)$ tensor products II: puzzles determine facets of the Littlewood-Richardson cone}, J. Amer. Math. Soc. {\bf 17} (2004), 19--48.
\bibitem[Kr10]{Kreiman} V.~Kreiman, \emph{Equivariant Littlewood-Richardson skew tableaux},
Trans. Amer. Math. Soc. {\bf 362}(2010), 2589--2617.
\bibitem[LaSc82]{lascoux.schuetzenberger} A.~Lascoux and M.-P.~Sch\"utzenberger, \emph{Structure de Hopf de l'anneau de cohomologie et de l'anneau de Grothendieck d'une vari\'et\'e de drapeaux}, C. R. Acad. Sci. Paris, {\bf 295} (1982), 629--633.
\bibitem[PeYo15]{PeYo} O.~Pechenik and A.~Yong, \emph{Equivariant $K$-theory of Grassmannians}, preprint 2015. \textsf{arXiv:1506.01992}
\bibitem[Pu08]{purbhoo} K.~Purbhoo, \emph{Puzzles, tableaux, and mosaics}, J. Algebraic Combin., {\bf 28} (2008), 461--480.
\bibitem[ThYo13]{Thomas.Yong:H_T} H.~Thomas and A.~Yong, \emph{Equivariant Schubert calculus and jeu de taquin}, Ann. Inst. Fourier (Grenoble), to appear, 2013.
\bibitem[Va06]{Vakil:annals} R.~Vakil, \emph{A geometric Littlewood-Richardson rule}, Ann. of Math.,
{\bf 164} (2006), 371--422.
\end{thebibliography}
\end{document}